\numberwithin{equation}{section}
\theoremstyle{plain}
\newtheorem{definition}{Definition}[section]
\newtheorem{proposition}[definition]{Proposition}
\newtheorem{theorem}[definition]{Theorem}
\newtheorem{corollary}[definition]{Corollary}
\newtheorem{lemma}[definition]{Lemma}
\newtheorem{example}{Example}
\newtheorem{remark}{Remark}
\def\Eta{E}
\def\argmax{\mathop{\rm argmax}}
\def\argmin{\mathop{\rm argmin}}
\newcommand{\cX}{{\cal X}}
\newcommand{\cY}{{\cal Y}}
\def\cY{{\cal Y}}
\newcommand{\lleq}{\mathrel{\mathpalette\gl@align<}}
\newcommand{\ggeq}{\mathrel{\mathpalette\gl@align>}}
\newcommand{\gl@align}[2]{
\vbox{\baselineskip\z@skip\lineskip\z@
\ialign{$\m@th#1\hfil##\hfil$\crcr#2\crcr{}_{{}_{(=)}}\crcr}}}
\def\Label#1{\label{#1}\ [\ \text{#1}\ ]\ }
\def\Label{\label}
\newenvironment{proofof}[1]{\vspace*{5mm} \par \noindent
         \quad{\it Proof of #1:\hspace{2mm}}}{\endproof
\hfill$\Box$ 
}
\begin{document}

\begin{frontmatter}
\title{Information Geometry Approach to Parameter Estimation in Markov Chains}
\runtitle{Information Geometry Approach in Markov Chains}

\begin{aug}

\author{{Masahito Hayashi$^{1}$ 
and Shun Watanabe$^{2}$}\\
\rm $^1$Graduate School of Mathematics, Nagoya University, Japan, \\
and Centre for Quantum Technologies, National University of Singapore, Singapore. \\
$^2$Department of  Information Science and Intelligent Systems, 
University of Tokushima, Japan, \\
and Institute for System Research, University of Maryland, College Park. \\
}
\runauthor{M. Hayashi and S. Watanabe}

\end{aug}

\begin{abstract}
We consider the parameter estimation of Markov chain 
when the unknown transition matrix belongs to
an exponential family of transition matrices. 
Then, we show that the sample mean of the generator of 
the exponential family
is an asymptotically efficient estimator.
Further, we also define a curved exponential family of transition matrices.
Using a transition matrix version of the Pythagorean theorem,
we give an asymptotically efficient estimator for a curved exponential family.
\end{abstract}

\begin{keyword}[class=MSC]
\kwd[Primary ]{62M05}
\end{keyword}

\begin{keyword}
\kwd{exponential family}
\kwd{natural parameter}
\kwd{expectation parameter} 
\kwd{relative entropy}
\kwd{Fisher information matrix}
\kwd{asymptotic efficient estimator}
\end{keyword}

\end{frontmatter}

\section{Introduction}\Label{s1}
Information geometry established by Amari and Nagaoka \cite{AN}
is an elegant method for statistical inference.
This method provides us a very general approach to statistical parameter estimation. 
Under this framework,
we easily find that the efficient estimator 
can be given with less calculation complexity
for exponential families and a curved exponential families
under the independent and identical distributed case.
Therefore, we can expect a similar structure in the Markov chains.

The preceding studies
\cite{Feigin,KS,Hudson,Bhat,Bhat2,Stefanov,Kuchler-Sorensen,Sorensen}
introduced the concept of exponential families of transition matrices.
However, in their definition,  
although the maximum likelihood estimator has the asymptotic efficiency,
i.e., attains the Cram\'{e}r-Rao bound asymptotically,
the maximum likelihood estimator is not necessarily calculated with less calculation complexity.
That is, the maximum likelihood estimator has a complex form so that
it requires long calculation time in their model.
Further, 
it is quite difficult to calculate the Cram\'{e}r-Rao bound 
even with the asymptotic first order coefficient
because these papers focused only on the limit of the inverse of the Fisher information.
From a practical viewpoint, it is needed to calculate the asymptotic first order coefficient.
So, it is strongly required to resolve these two problems for the estimation of Markovian process, i.e.,
(1) to give an asymptotically efficient estimator with small calculation
and  (2) to derive a formula for the asymptotic Cram\'{e}r-Rao bound 
with small calculation.

The purpose of this paper is giving the answers for these two problems.
For this purpose, 
we notice another type of exponential family of transition matrices by 
Nakagawa and Kanaya \cite{NK} and Nagaoka \cite{HN}. 
They defined the Fisher information matrix in their sense.
On the other hand, 
for the estimation of the probability distribution,
the class of curved exponential families plays an important role as a wider class of
distribution families than the class of exponential families.
That is, when the unknown distribution belongs to a curved exponential family,
the asymptotic efficient estimator can be treated in the information-geometrical framework.
Therefore, to deal with these problems in a wider class of families of transition matrices,
we introduce a curved exponential family of transition matrices as a subset of an exponential family of transition matrices in the sense of \cite{NK,HN}.
Since any exponential family of transition matrices is a curved exponential family,
the class of curved exponential families is a larger class of
families of transition matrices
than the class of exponential families.
Especially, any smooth subset of transition matrices on a finite-size system forms a curved exponential family of transition matrices.
Our purpose is resolving the above two problems 
for a curved exponential family as well as for an exponential family.
Since any smooth parametric subfamily of transition matrices on a finite-size system forms a curved exponential family,
our treatment for curved exponential families
has a wide applicability for the estimation of Markovian process.
This is reason why we adopted the definition of an exponential family
by  \cite{NK,HN}. 

Firstly, we  show that, for an exponential family of transition matrices in the sense of \cite{NK,HN},
an estimator of a simple form asymptotically attains the Cram\'{e}r-Rao bound, which is given as the inverse of Fisher information matrix.
That is, the estimator for the expectation parameter
is asymptotically efficient and is written as the sample mean of 
$n+1$-observations.
Since it requires only a small amount of calculation, 
the problem (1) is resolved.
Additionally, the problem (2) is also resolved for an exponential family of transition matrices because Fisher information matrix is computable.

To show the above items,
we discuss the behavior of the sample mean of $n+1$ observations.
Indeed, while the existing papers \cite{kemeny-snell-book,Peskun} derived the form of the asymptotic variance,
this paper shows that the asymptotic variance can be written by using the second derivative of 
the potential function of the generated exponential family.
Using this relation, we show that the sample mean 
asymptotically attains the Cram\'{e}r-Rao bound
for the expectation parameter.

Next, we define the Fisher information matrix for a curved exponential family with a computable form.
Then, using a transition matrix version of the Pythagorean theorem,
we give an asymptotically efficient estimator for a curved exponential family, in which, the estimator is given as a function of the above estimator in the larger exponential family.
Since the asymptotic mean square error is the inverse of the Fisher information matrix,
the problems (1) and (2) are resolved jointly. 
In the above way, we resolve the problems that were unsolved in existing papers
\cite{Feigin,KS,Hudson,Bhat,Bhat2,Stefanov,Kuchler-Sorensen,Sorensen}.
Further, during this derivation, we also obtain a notable evaluation 
for variance of sample mean as a by product, which is summarized in Subsection \ref{s2-1}.

For the above discussion, we need the description of 
an exponential family of transition matrices.
Since the information geometrical structure for probability distributions 
plays important roles in several topics in information theory as well as statistics,
it is better to describe the information geometry of transition matrices 
so that it can be easily applied to these topics.
In fact, the authors applied it to finite-length evaluations of the tail probability,
the error probability in simple hypothesis testing, 
source coding, channel coding, and random number generation in Markov chain as well as the estimation error of parametric family of transition matrices \cite{HW14-2,W-H2}.
Thus, we revisit the exponential family of transition matrices \cite{NK,HN}
in a manner consistent with the above purpose
by using Bregmann divergence \cite{Br,Am}.
In particular, the relative R\'{e}nyi entropy for transition matrices plays an important role in the finite-length analysis; we define the relative entropy for transition matrices so that it is a special case of the relative R\'{e}nyi entropy, which is different from the definitions in the literatures \cite{NK,HN}. 
Although some of results in this paper have been already stated
in \cite{HN} (without detailed proof), we restate those results and give proofs since the logical order of arguments are
different from \cite{HN} and we want to keep the paper self-contained.
In particular, although the paper \cite{HN} is written with differential geometrical terminologies, e.g., Christoffel symbols,
this paper is written only with terminologies of convex functions and linear algebra. 

The remaining of this paper is organized as follows.
Section \ref{s2} gives the brief summary of obtained results, which is crucial for understanding the structure of this paper.
In Section \ref{s4}, we define 
the relative entropy and the relative R\'{e}nyi entropy between two transition matrices
In Section \ref{s4-5}, we revisit an exponential family of transition matrices
and its properties.
In Section \ref{s5}, we focus on the joint distribution when a transition matrix is given 
as an element of a one-parameter exponential family
and the input distribution is given as the stationary distribution.
Then, we characterize the quantities given in Sections \ref{s4} and \ref{s4-5} by using the joint distribution.
In Section \ref{s6}, we proceed to the $n+1$ observation Markov process when the initial distribution is the stationary distribution.
Then, we show that the sample mean of the generator is an unbiased and asymptotically efficient estimator 
under a one-parameter exponential family.
In Section \ref{s7}, we proceed to the $n+1$ observation Markov process when the initial distribution is a non-stationary distribution.
We show a similar fact in this case.
Section \ref{s8} extends a part of these results 
to the multi-parameter case and the case of a curved exponential family. 
In appendix, we address the relations with existing results by Nakagawa and Kanaya \cite{HN}, Nagaoka\cite{HN}, and Natarajan \cite{N}.

\section{Summary of results}\Label{s2}
Here, we prepare notations and definitions.
For two given transition matrices $W$ and $W_Y$ over $\cX$ and $\cY$,
we define $W \times W_Y(x,y|x',y'):=W(x|x')W_Y(y|y')$,
$W^{\times n}(x_n,x_{n-1}, \ldots,x_1|x'):=
W(x_n|x_{n-1})W(x_{n-1}|x_{n-2})\cdots W(x_1|x')$,
and 
$W^n(x|x')= \sum_{x_{n-1}, \ldots,x_1} W^{\times n}(x,x_{n-1}, \ldots,x_1|x')$.
For a given distribution $P$ on $\cX$
and a transition matrix $V$ from $\cX$ to $\cY$,
we define $V \times P(y,x):=V(y|x)P(x)$
and $V P(y):=\sum_x V \times P(y,x)$.

A non-negative matrix $W$ is called {\it irreducible} 
when for each $x,x'\in \cX$, there exists a natural number $n$ such that $W^n(x|x')>0$ \cite{MU}.
An irreducible matrix $W$ is called {\it ergodic} when 
there are no input $x'$ and no integer $n'$
such that $W^{n} (x'|x')=0$ unless 
$n$ is divisible by $n'$ \cite{MU}.
The irreducibility and the ergodicity depend only on the support $\cX^2_W:=\{(x,x') \in \cX^2| W(x|x')>0\}$
for a non-negative matrix $W$ over $\cX$.
Hence, we say that $\cX^2_W$ is {\it irreducible} and {\it ergodic} 
when a non-negative matrix $W$ is irreducible and ergodic, respectively.
Indeed, when a subset of $\cX^2_W$ is irreducible and ergodic,
the set $\cX^2_W$ is also irreducible and ergodic, respectively.
It is known that 
the output distribution $W^n P$ converges to the stationary distribution of $W$
for a given ergodic transition matrix $W$ \cite{kemeny-snell-book,DZ,MU}.
Although the main result is asymptotic estimation for 
an exponential family and a curved exponential family,
we also have additional results as Subsections \ref{s2-1} and \ref{s2-3}.

\subsection{Asymptotic behavior of sample mean}\Label{s2-1}
Assume that the random variable $X_n$ obeys the Markov process with the 
irreducible and ergodic transition matrix $W(x|x')$.
In this paper, for an arbitrary two-input function $g(x,x')$,
we focus on the sample mean $S_n:=\frac{g^n(X^{n+1})}{n}$
where
$g^n(X^{n+1}):= \sum_{i=1}^n g(X_{i+1},X_{i})$,
and $X^{n+1}:= (X_{n+1}, \ldots,X_1)$.
This is because a two-input function $g(x,x')$ is closely related to an exponential family of transition matrices.
Indeed, the simple sample mean can be treated in this formulation by choosing $g(x,x')$ as $x$ or $x'$.
Since the function $g(x,x')$ can be chosen arbitrary, 
the following discussion can handle the sample mean of the hidden Markov process.

Then, the expectation $\mathsf{E} [S_n]$ and the variance $\mathsf{V} [S_n]$
are characterized as follows.
We denote the normalized Perron-Frobenius eigenvector of 
$W(x|x')$ by $P_W$ and
define the limiting expectation $
\mathsf{E}[g(X,X')]:= \sum_{x,x'}g(x,x')W(x|x') P_W(x')$.
We denote the Perron-Frobenius eigenvalue of 
$W(x|x')e^{\theta g(x,x')}$ by $\lambda_\theta$
and define the cumulant generating function $\phi(\theta):= 
\log \lambda_\theta$.
Then, when the transition matrix $W$ is irreducible and ergodic,
the relation
\begin{align}
\mathsf{E} [S_n] &\to \mathsf{E}[g(X,X')] 
\end{align}
is known.
In Sections \ref{s6} and \ref{s7} of this paper, we show
\begin{align}
\Label{1-15-1} n \mathsf{V} [S_n] & \to \frac{d^2 \phi}{d \theta^2}(0)
\end{align}
while existing papers \cite{kemeny-snell-book,Peskun} characterized the asymptotic variance 
by using the fundamental matrix. (See \cite[Section 6]{HW14-2}.)

In particular, when the initial distribution is the stationary distribution $P_W$, we have
$\mathsf{E} [S_n]=  \mathsf{E}[g(X,X')]$.
Then, in Section \ref{s6}, using a constant $C$, we show that
\begin{align}
\frac{d^2 \phi}{d \theta^2}(0) (1-\frac{C}{\sqrt{n}})^2
\le n \mathsf{V} [S_n] \le  
\frac{d^2 \phi}{d \theta^2}(0) (1+\frac{C}{\sqrt{n}})^2
\end{align}
for the stationary case. The concrete form of $C$ is also  given in Section \ref{s6}.
This analysis is obtained via evaluations of Fisher information given in Sections \ref{s5}, \ref{s6}, and \ref{s7}.

\subsection{Cram\'{e}r-Rao bound and asymptotically efficient estimator}\Label{s2-3}
Firstly, for simplicity, we summarize our obtained results for the one-parameter case
while this paper addresses a multi-parameter exponential family.
In Section \ref{s4-5}, for a given two-input function $g(x,x')$ and an irreducible and ergodic 
transition matrix $W$,
we define the potential function $\phi(\theta)$ and
exponential family of transition matrices
$\{W_\theta\}$ with the generator $g(x,x')$.
We also define its Fisher information matrix $\frac{d^2 \phi}{d\theta^2}(\theta)$
and the expectation parameter 
$\eta(\theta):=\frac{d \phi}{d \theta}(\theta)$. 
Then, we focus on the distribution family of Markov chains generated by the family of transition matrices $\{ W_\theta \}$
with arbitrary initial distributions.
We show that 
the Fisher information of the expectation parameter under the distribution family
is asymptotically equal to 
$n \frac{d^2 \phi}{d\theta^2}(\theta(\eta))^{-1}+o(n) $ 
even for the non-stationary case in Section \ref{s7}.
Then, we show that the random variable $S_n$
is the asymptotically efficient estimator, i.e.,
the mean square error is $\frac{d^2 \phi}{d\theta^2}(\theta(\eta))/n+o(1/n)$. 
In Section \ref{s6}, we give more detailed analysis for the stationary case.
To derive the results in Sections \ref{s6} and \ref{s7}, 
we prepare evaluations of Fisher information in Section \ref{s5}.

Now, we address the multi-parameter case.
In Section \ref{s4-5},
we also define a multi-parameter exponential family $W_{\vec{\theta}}$ of transition matrices,
and show the Pythagorean theorem.
Then, we show the asymptotic efficiency of the sample mean in the multi-parameter case in Subsections \ref{s8-1} and \ref{s8-2}.
We also show that 
the set of all positive transition matrices on
a finite-size system forms an exponential family in Example \ref{1-3-8}.
Further, we define a curved exponential family of transition matrices,
and give its asymptotically efficient estimator in Subsection \ref{s8-3}.
Since any smooth parametric family of transition matrices on a finite-size system forms a curved exponential family,
this result has a wide applicability.
These results require the technical preparations given in Sections \ref{s4}, \ref{s4-5}, and \ref{s5}.

\subsection{Relative entropy and relative R\'{e}nyi entropy}
In this paper, given two transition matrices $W$ and $V$,
we define the relative entropy $D(W\|V)$ and the relative R\'{e}nyi entropy $D_{1+s}(W\|V)$ in Section \ref{s4}.
In Subsection \ref{s8-3}, 
the relative entropy $D(W\|V)$ plays a crucial role 
in our estimator in a curved exponential family.
We also show that 
the Fisher information is given as the limits of 
the relative entropy and the relative R\'{e}nyi entropy,
which plays important roles in the proof of the asymptotic efficiency of our estimator in a curved exponential family in Subsection \ref{s8-3}.
Also, as discussed in \cite{HW14-2},
the relative R\'{e}nyi entropy $D_{1+s}(W\|V)$
plays a central role in simple hypothesis testing 
as well as the relative entropy $D(W\|V)$.
Further, these information quantities play an central role in 
random number generation, data compression,
and channel coding \cite{W-H2}. 
In Section \ref{s4}, we also give their properties that are useful in the above applications.

For these applications, we need to address
the relative entropy $D(W\|V)$ and the relative R\'{e}nyi entropy $D_{1+s}(W\|V)$ in a unified way.
More precisely, the relative entropy $D(W\|V)$ is needed to be defined as the limit of the relative R\'{e}nyi entropy $D_{1+s}(W\|V)$.
Indeed, the existing paper \cite{HN} defined
the relative entropy $D(W\|V)$ in a different way.
However, the definition by \cite{HN} cannot yield the definition of 
the relative R\'{e}nyi entropy in a unified way.
Appendix \ref{as1} summarizes the detailed relation between the results in this part and existing results.


\section{Relative entropy and relative R\'{e}nyi entropy}\Label{s4}
In this section, in order to investigate geometric structure for transition matrices,
we define the relative entropy and the relative R\'{e}nyi entropy.
For this purpose we prepare the following lemma, which is shown after Lemma \ref{L11}.
\begin{lemma}\Label{L1}
Consider an irreducible transition matrix $W$ over $\cX$
and a real-valued function $g$ on $\cX \times \cX$.
Define $\phi(\theta)$ as the logarithm of the Perron-Frobenius eigenvalue of the matrix:
\begin{align}
\overline{W}_{\theta}(x|x'):= W(x|x') e^{\theta g(x,x')}.
\end{align}
Then, the function $\phi(\theta)$ is convex.
Further, the following conditions are equivalent.
\begin{itemize}
\item[(1)] 
No real-valued function $f$ on $\cX$ satisfies that 
$g(x,x')=f(x)-f(x')+c$ for any $(x,x')\in \cX^2_W$ with a constant $c \in \mathbb{R}$.
\item[(2)]
The function $\phi(\theta)$ is strictly convex, i.e., 
$\frac{d^2 \phi}{d \theta^2}(\theta)>0$ for any $\theta$.
\item[(3)]
$\frac{d^2 \phi}{d \theta^2}(\theta)|_{\theta=0}>0$.
\end{itemize}
\end{lemma}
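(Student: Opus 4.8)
\emph{Convexity.} I would first write $\phi$ as a limit of cumulant generating functions. For $n\ge 1$ set $f_n(\theta):=\log\sum_{x,x'}\overline{W}_\theta^{\,n}(x|x')=\log\sum_{x_0,\dots,x_n}\prod_{i=1}^n W(x_i|x_{i-1})\,e^{\theta g(x_i,x_{i-1})}$. Each summand has the form $w\,e^{\theta a}$ with $w\ge 0$, $a\in\bR$, so $f_n$ is the logarithm of a finite nonnegative combination of exponentials in $\theta$ and is therefore convex. By the Perron--Frobenius theorem applied to the irreducible matrix $\overline{W}_\theta$, $\sum_{x,x'}\overline{W}_\theta^{\,n}(x|x')$ grows (exponentially in $n$) like the $n$-th power of the Perron eigenvalue of $\overline{W}_\theta$, so $\frac1n f_n(\theta)\to\phi(\theta)$ pointwise; a pointwise limit of convex functions is convex, giving the first assertion. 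Since $\theta\mapsto\overline{W}_\theta$ is real-analytic and its Perron eigenvalue is a simple root of the characteristic polynomial, $\phi$ is in fact real-analytic, so $\phi''$ is continuous and, by convexity, everywhere $\ge 0$.

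\emph{The easy implications.} I would prove the cycle $(1)\Rightarrow(2)\Rightarrow(3)\Rightarrow(1)$. Here $(2)\Rightarrow(3)$ is immediate, and $(3)\Rightarrow(1)$ I argue by contraposition: if $g(x,x')=f(x)-f(x')+c$ for all $(x,x')\in\cX^2_W$, let $D_\theta$ be the invertible diagonal matrix with entries $e^{\theta f(x)}$; then $\overline{W}_\theta=e^{\theta c}\,D_\theta\,W\,D_\theta^{-1}$ is similar to $e^{\theta c}W$, hence (using that the irreducible stochastic matrix $W$ has Perron eigenvalue $1$) $\phi(\theta)=\theta c$ is affine and $\phi''(0)=0$, so $(3)$ fails.

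\emph{The main step $(1)\Rightarrow(2)$.} I would prove the contrapositive: assuming $\phi''(\theta_0)=0$ for some $\theta_0$, construct $f$ and $c$ with $g(x,x')=f(x)-f(x')+c$ on $\cX^2_W$. First recenter at $\theta_0$: letting $\ell_{\theta_0}$ be a positive left Perron eigenvector of $\overline{W}_{\theta_0}$, the twisted matrix $\widetilde{W}(x|x'):=\ell_{\theta_0}(x)\,\overline{W}_{\theta_0}(x|x')/(\lambda_{\theta_0}\ell_{\theta_0}(x'))$ is again irreducible and stochastic with the \emph{same} support $\cX^2_W$, and the matrix $\widetilde{W}(x|x')e^{sg(x,x')}$ has Perron eigenvalue $\lambda_{\theta_0+s}/\lambda_{\theta_0}$, so the potential of this recentered family is $s\mapsto\phi(\theta_0+s)-\phi(\theta_0)$ and its second derivative at $s=0$ is $\phi''(\theta_0)$; thus it suffices to treat $\theta_0=0$. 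Next I would identify $\phi''(0)$ with the asymptotic variance: from $\mathsf{E}_{P_W}[e^{\theta\sum_{i=1}^n g(X_{i+1},X_i)}]=\mathbf{1}^\top\overline{W}_\theta^{\,n}P_W$, the spectral decomposition of $\overline{W}_\theta$ yields $\frac1n\log\mathsf{E}_{P_W}[e^{\theta\sum_i g(X_{i+1},X_i)}]\to\phi(\theta)$ with convergence of the first two derivatives, so $\phi''(0)=\lim_n\frac1n\mathsf{V}[\sum_{i=1}^n g(X_{i+1},X_i)]$ for the stationary chain (the quantitative version is \eqref{1-15-1}, developed in Sections~\ref{s6}--\ref{s7}; alternatively one reads $\phi''(0)$ off second-order perturbation theory of the Perron eigenvalue via the group inverse of $I-W$). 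Finally I would invoke the standard coboundary characterization of zero asymptotic variance, applied to the ``snake'' chain $Z_i:=(X_{i+1},X_i)$ on $\cX^2_W$, whose transition kernel $\Pi$ is irreducible with stationary distribution $\mu(a,b):=W(a|b)P_W(b)$: with $\hat g:=g-\mathsf{E}[g(X,X')]$ one solves the Poisson equation $(I-\Pi)u=\hat g$ (solvable since $\hat g$ is $\mu$-centered), and the martingale decomposition gives $\lim_n\frac1n\mathsf{V}[\sum_i g(X_{i+1},X_i)]=\mathsf{E}_\mu[\Pi(u^2)-(\Pi u)^2]\ge 0$; vanishing of this quantity forces $u(a',a)$ to be independent of $a'$ whenever $(a',a)\in\cX^2_W$, say $u(a',a)=v(a)$, and then the Poisson equation gives $\hat g(a,b)=u(a,b)-v(a)$, which evaluated along an edge reads $\hat g(a',a)=v(a)-v(a')$. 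This is precisely $g(x,x')=v(x)-v(x')+\mathsf{E}[g(X,X')]$ on $\cX^2_W$, so $(1)$ fails, completing the cycle.

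\emph{Main obstacle.} The delicate point is the middle of the last step: pinning down that $\phi''(\theta_0)$ is genuinely the (nonnegative) asymptotic variance, equivalently the nonnegative quadratic form coming from second-order perturbation of the Perron eigenvalue, and recognizing that this form degenerates exactly along the coboundaries $g(x,x')=v(x)-v(x')$. Once that identification is in hand, the snake-chain/Poisson-equation manipulation and the remaining implications are routine linear algebra.
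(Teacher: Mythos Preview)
Your argument is correct but takes a genuinely different route from the paper. The paper's proof (placed after Lemma~\ref{L11-2}) hinges on the single–step variance identity
\[
\phi''(\theta)=\mathsf{V}_\theta\Bigl[g(X,X')-\phi'(\theta)+\tfrac{d}{d\theta}\log\overline{P}^3_\theta(X)-\tfrac{d}{d\theta}\log\overline{P}^3_\theta(X')\Bigr],
\]
obtained by computing the Fisher information $J_\theta^2$ of the two-observation joint law in two ways. Convexity is then immediate, and $(1)\Rightarrow(2)$ is one line: if this variance vanishes at some $\theta$, the bracketed expression is constant on $\cX^2_W$, which exhibits $g$ as a coboundary with the explicit choice $f(x)=-\tfrac{d}{d\theta}\log\overline{P}^3_\theta(x)$. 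Your route instead identifies $\phi''(0)$ with the \emph{asymptotic} variance $\lim_n n^{-1}\mathsf{V}[\sum_i g(X_{i+1},X_i)]$ and then runs a Poisson-equation argument on the snake chain to extract the coboundary. One caution: if you justify that identification by appealing to \eqref{1-15-1} as proved in Sections~\ref{s6}--\ref{s7}, you are circular, since the paper derives \eqref{25-8} precisely through the variance formula of Lemma~\ref{L11-2}, which is what proves Lemma~\ref{L1}. Your parenthetical alternative---second-order Perron perturbation via the group inverse/fundamental matrix, in the Kemeny--Snell spirit---does break the circle and makes your argument self-contained. What each approach buys: yours uses only standard Markov-chain CLT machinery and avoids differentiating eigenvectors, at the cost of a longer detour and a less explicit $f$; the paper's is shorter, stays at the two-step level, and hands you $f$ as the log-derivative of the left Perron eigenvector.
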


Using Lemma \ref{L1}, given 
two distinct transition matrices $W$ and $V$,
we define the relative entropy $D({W} \| V)$
and the relative R\'{e}nyi entropy $D_{1+s}({W} \| V)$ as follows. 
For this purpose, we denote the logarithm of 
the Perron-Frobenius eigenvalue of the matrix
$W(x|x')^{1+s}V(x|x')^{-s}$ by $\varphi(1+s)$ under the condition given below.
When $\cX^2_W \subset \cX^2_V$ and $\cX^2_W$ is irreducible,
we define 
\begin{align}
D({W} \| V):= 
\frac{d \varphi}{d s}(1) \Label{1-10},\quad
D_{1+s}({W} \| V):=
\frac{\varphi(1+s)}{s}
\end{align}
for $s>0$.
The relative R\'{e}nyi entropy $D_{1+s}({W} \| V)$ with $s \in (-1,0)$
is defined by (\ref{1-10}) when $\cX^2_W \cap \cX^2_V$ is irreducible,
which is a weaker assumption.
When $\cX^2_W \cap \cX^2_V$ is irreducible and 
the condition $\cX^2_W \subset \cX^2_V$ does not hold,
the relative entropy $D({W} \| V)$
and the relative R\'{e}nyi entropy $D_{1+s}({W} \| V)$
with $s>0$ are regarded as the infinity.
Note that the limit $\lim_{s \to 0}D_{1+s}({W} \| {W}')$ equals
$D({W} \| {W}')$.
When $\cX^2_W \subset \cX^2_V$ and $\cX^2_W$ is irreducible,
the function $\log \frac{W(x|x')}{V(x|x')}$ satisfies 
the condition for the function $g$ in Lemma \ref{L1}
because $W$ and $V$ are distinct.
Hence, the function $s \mapsto s D_{1+s}({W} \| V)$ is strictly convex.
So, the relative R\'{e}nyi entropy $D_{1+s}({W} \| V)$ is strictly monotone increasing with respect to $s$.

From the property of Perron-Frobenius eigenvalue,
we immediately obtain the following lemma.
\begin{lemma}\Label{L-1-11-3}
Given two transition matrices $W_X$ and $V_X$ 
($W_Y$ and $V_Y$) on ${\cal X}$ (${\cal Y}$), respectively,
we have
\begin{align*}
D(W_X\|V_X) + D(W_Y\|V_Y) &=
D(W_X\times W_Y\|V_X\times V_Y) \\
D_{1+s}(W_X\|V_X) + D_{1+s}(W_Y\|V_Y) &=
D_{1+s}(W_X\times W_Y\|V_X\times V_Y) 
\end{align*}
for $s \in (-1,0)\cup (0,\infty)$.
\end{lemma}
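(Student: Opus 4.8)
The plan is to reduce both identities to the multiplicativity of the Perron–Frobenius eigenvalue under Kronecker products. Fix $s\in(-1,0)\cup(0,\infty)$ and set $M_X(x|x'):=W_X(x|x')^{1+s}V_X(x|x')^{-s}$ and $M_Y(y|y'):=W_Y(y|y')^{1+s}V_Y(y|y')^{-s}$ (entrywise), and let $M_{XY}$ be the corresponding matrix for the pair $(W_X\times W_Y,\,V_X\times V_Y)$. Since $(W_X\times W_Y)(x,y|x',y')=W_X(x|x')W_Y(y|y')$ and likewise for $V_X\times V_Y$, one gets $M_{XY}(x,y|x',y')=M_X(x|x')\,M_Y(y|y')$; that is, $M_{XY}=M_X\otimes M_Y$ as a Kronecker product. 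Write $\varphi_X(1+s),\varphi_Y(1+s),\varphi_{XY}(1+s)$ for the logarithms of the respective Perron–Frobenius eigenvalues.

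First I would verify that the support of $M_X$ is exactly the set carrying the irreducibility hypothesis: for $s>0$ it equals $\cX^2_{W_X}$ (the inclusion $\cX^2_{W_X}\subset\cX^2_{V_X}$ ensures the negative power of $V_X$ is finite on $\cX^2_{W_X}$, while $W_X(x|x')^{1+s}$ vanishes off it), and for $s\in(-1,0)$ it equals $\cX^2_{W_X}\cap\cX^2_{V_X}$; in both cases it is irreducible by hypothesis, and likewise for $M_Y$. Hence each of $M_X,M_Y$ is an irreducible nonnegative matrix carrying a strictly positive right Perron–Frobenius eigenvector, say $M_Xu_X=e^{\varphi_X(1+s)}u_X$ with $u_X>0$ and $M_Yu_Y=e^{\varphi_Y(1+s)}u_Y$ with $u_Y>0$. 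The key step is then immediate: $u_X\otimes u_Y>0$ and $(M_X\otimes M_Y)(u_X\otimes u_Y)=e^{\varphi_X(1+s)+\varphi_Y(1+s)}(u_X\otimes u_Y)$, so the nonnegative matrix $M_{XY}$ admits a strictly positive eigenvector; applying the Collatz–Wielandt formula $\rho(N)=\inf_{x>0}\max_i(Nx)_i/x_i$ with $x=u_X\otimes u_Y$ shows that this eigenvalue is the spectral radius of $M_{XY}$, whence $\varphi_{XY}(1+s)=\varphi_X(1+s)+\varphi_Y(1+s)$.

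Dividing this identity by $s$ gives the additivity of $D_{1+s}$ for every $s\in(-1,0)\cup(0,\infty)$. For the relative entropy, each $\varphi$ is analytic in $s$ near $0$ (the Perron–Frobenius eigenvalue of an irreducible matrix is algebraically simple and $M_X,M_Y$ depend analytically on $s$), so differentiating $\varphi_{XY}=\varphi_X+\varphi_Y$ and evaluating at $s=0$ yields $\frac{d\varphi_{XY}}{ds}(1)=\frac{d\varphi_X}{ds}(1)+\frac{d\varphi_Y}{ds}(1)$, which is exactly $D(W_X\times W_Y\|V_X\times V_Y)=D(W_X\|V_X)+D(W_Y\|V_Y)$. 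I do not expect a genuine obstacle: the one point to watch is that $M_X\otimes M_Y$ need not itself be irreducible, so the Perron–Frobenius eigenvalue of $M_{XY}$ must be pinned down through the explicit positive eigenvector $u_X\otimes u_Y$ rather than by invoking uniqueness of a Perron eigenvalue. The support computation, the analyticity claim, and the degenerate case in which a term is infinite because support inclusion fails (then the same failure propagates to $W_X\times W_Y$ and $V_X\times V_Y$, so both sides are infinite) are all routine.
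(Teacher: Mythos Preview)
Your argument is correct and follows essentially the same route the paper has in mind: the paper merely states that the lemma follows ``from the property of Perron--Frobenius eigenvalue,'' i.e., from the multiplicativity of the spectral radius under the Kronecker product $M_{XY}=M_X\otimes M_Y$. Your write-up is in fact more careful than the paper's one-line justification, since you explicitly note that $M_X\otimes M_Y$ need not be irreducible and you pin down the spectral radius via the strictly positive eigenvector $u_X\otimes u_Y$ rather than by appealing to uniqueness.
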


\begin{theorem}\Label{th15}
Transition matrices $W_1$, $W_2$, and $W$ satisfy
\begin{align}
p D(W_1\|W)+ (1-p) D(W_2\|W) & \ge D(p W_1+ (1-p) W_2 \|W)
\Label{th15-1} \\
p D(W\| W_1)+ (1-p) D(W\| W_2) & \ge D(W \| p W_1+ (1-p) W_2 )
\Label{th15-2} 
\end{align}
for $p \in (0,1)$.
\end{theorem}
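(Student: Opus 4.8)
The plan is to reduce, for \emph{both} inequalities, the transition-matrix relative entropy to a single-letter (per-row) quantity, and then to treat the two slots by genuinely different arguments.

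\textbf{A single-letter formula.} First I would establish --- or simply quote from Section~\ref{s5} --- that, with $P_W$ the normalized Perron--Frobenius eigenvector of $W$,
\begin{align*}
D(W\|V)=\sum_{x'}P_W(x')\,D(W(\cdot|x')\|V(\cdot|x'))=D(Q_W\|V\times P_W),
\end{align*}
where $Q_W(x,x'):=W(x|x')P_W(x')$ is the stationary pair distribution and $D$ on the right is ordinary relative entropy. This follows from the definition $D(W\|V)=\frac{d\varphi}{ds}(1)$ by first-order perturbation of the Perron eigenvalue of $M_t(x|x')=W(x|x')^{t}V(x|x')^{1-t}$: at $t=1$ we have $M_1=W$, Perron value $1$, left eigenvector $\mathbf 1$, right eigenvector $P_W$, and $\frac{d}{dt}M_t(x|x')\big|_{t=1}=W(x|x')\log\tfrac{W(x|x')}{V(x|x')}$, so $\varphi'(1)=\mathbf 1^{\!\top}\!\bigl(\tfrac{d}{dt}M_t|_{t=1}\bigr)P_W$ equals the displayed sum.

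\textbf{The second slot, \eqref{th15-2}.} This is now immediate: the weights $P_W(x')$ do not involve the second argument, and for each $x'$ the map $U\mapsto D(W(\cdot|x')\|U)$ is convex (convexity of relative entropy in its second argument, equivalently the log-sum inequality). Averaging these convexity inequalities against $P_W$ and using $(pW_1+(1-p)W_2)(\cdot|x')=pW_1(\cdot|x')+(1-p)W_2(\cdot|x')$ yields \eqref{th15-2}.

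\textbf{The first slot, \eqref{th15-1}.} Here the weights $P_W$ move with the argument, so the Step-2 argument fails, and this is where the work lies. From $D(W\|V)=D(Q_W\|V\times P_W)$ and joint convexity of relative entropy one gets that $Q\mapsto D(Q\|V\times Q_2)$ is convex on the convex set of shift-invariant joint laws, so \eqref{th15-1} would follow at once if $W\mapsto Q_W$ were affine on the segment $W=pW_1+(1-p)W_2$ --- but it is not. My plan is therefore to pass to the $(n{+}1)$-step chain, using $D(W\|V)=\lim_n\frac1n D_n$ where $D_n$ is the relative entropy between the laws of the first $n{+}1$ states of the chains driven by $W$ and by $V$ from a common full-support initial law $\mu$, and to realize the $pW_1+(1-p)W_2$-chain as the label-forgetting marginal of a lifted chain that, at each step, draws an independent label in $\{1,2\}$ with law $(p,1-p)$ and then applies $W_{\text{label}}$. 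Data processing under label-forgetting, the chain rule, row-wise convexity of relative entropy in the first argument, and ergodic averaging of the time-$i$ marginals as $n\to\infty$ should then assemble into the bound. The obstacle I expect to be decisive is reconciling the stationary law of $pW_1+(1-p)W_2$ with those of $W_1,W_2$ that this averaging introduces; closing that gap --- perhaps via an auxiliary comparison of Perron vectors, or by recasting $D$ through the Bregman-divergence description of the exponential family of all positive transition matrices (Example~\ref{1-3-8}), where convexity of $D$ in each slot becomes convexity of the potential $\phi$ (Lemma~\ref{L1}), respectively of its Legendre transform --- is the crux of \eqref{th15-1}.
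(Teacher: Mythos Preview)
Your treatment of \eqref{th15-2} is exactly the paper's: it invokes the conditional relative-entropy identity \eqref{eq:markov-divergence-cdf-relation} and then uses convexity of $U\mapsto -\sum_x W(x|x')\log U(x)$ (equivalently, convexity of ordinary KL in its second slot), averaged against the fixed weights $\overline{P}^1_W(x')$. Nothing to change there.

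For \eqref{th15-1} the paper's proof is a single sentence: it says the inequality follows ``directly'' from Lemma~\ref{L1-14}, i.e.\ from the dual Bregman representation \eqref{5-28-1}, which gives $\vec\eta\mapsto D(W_{\vec\theta(\vec\eta)}\|W_{\vec\theta'})=\nu(\vec\eta)+(\text{affine in }\vec\eta)$ and hence convexity in the expectation parameter because the Legendre transform $\nu$ is convex. This is precisely the route you name in your last sentence but do not carry out. The paper does not use any lifted-chain construction, data processing, or $n\to\infty$ limit; your main attack on \eqref{th15-1} is therefore a genuinely different approach.

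That said, the obstacle you isolate is real and it cuts both ways. In the full exponential family of Example~\ref{1-3-8} the expectation parameter is the stationary pair $\eta(W)=Q_W=W\times P_W$, and the map $W\mapsto Q_W$ is \emph{not} affine (exactly your ``stationary law of $pW_1+(1-p)W_2$ is not $pP_{W_1}+(1-p)P_{W_2}$''). Thus Lemma~\ref{L1-14}(2) --- convexity in $\vec\eta$ --- does not by itself yield convexity under entrywise mixing of transition matrices, and the paper's one-line appeal glosses over the very step you flag as ``the crux''. So your lifted-chain argument is incomplete by your own admission, the Bregman route you point to at the end is what the paper actually invokes, and neither your text nor the paper's supplies the missing link between entrywise convex combinations of $W$ and convex combinations in $\vec\eta$-coordinates. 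If your goal is to match the paper, replace the lifted-chain paragraph by the Bregman argument via \eqref{5-28-1} and Lemma~\ref{L1-14}(2); if your goal is a fully rigorous proof of \eqref{th15-1}, you still owe an additional ingredient beyond what either source provides.
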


\eqref{th15-1} 
can be directly shown from Lemma \ref{L1-14} 
given latter.
The proof of \eqref{th15-2}
will be given after \eqref{eq:markov-divergence-cdf-relation}.

\section{Information geometry for transition matrices}\Label{s4-5}
\subsection{Exponential family}
In the following, we treat only irreducible transition matrices. Hence, 
an irreducible transition matrix is simply called a transition matrix.
We define an exponential family for transition matrices.
We focus on a transition matrix $W(x|x')$ from $\cX$ to $\cX$.
Then, a set of real-valued functions $\{g_j\}$ on $\cX \times \cX$
is called {\it linearly independent} under the transition matrix $W(x|x')$
when any linear non-zero combination of $\{g_j\}$ satisfies the condition in Lemma \ref{L1}.
For $\vec{\theta}=(\theta^1, \ldots,\theta^d)$
and linearly independent functions $\{g_j\}$,
we define the matrix ${W}_{\vec{\theta}}(x|x')$ from $\cX$ to $\cX$ in the following way.
\begin{align}
\overline{W}_{\vec{\theta}}(x|x'):= W(x|x') e^{\sum_{j=1}^d  \theta^j g_j(x,x')}.
\Label{5-6}
\end{align}
Using the Perron-Frobenius eigenvalue $\lambda_{\vec{\theta}}$ of $\overline{W}_{\vec{\theta}}$,
we define the potential function $\phi(\vec{\theta}):=\log \lambda_{\vec{\theta}}$. 

Note that, since the value $\sum_{x}\overline{W}_{\vec{\theta}}(x|x')$ generally depends on $x'$, 
we cannot make a transition matrix by simply multiplying a constant with the matrix $\overline{W}_{\vec{\theta}}$.
To make a transition matrix from the matrix $\overline{W}_{\vec{\theta}}$, we recall that
a non-negative matrix $V$ from $\cX$ to $\cX$ is a transition matrix
if and only if the vector $(1, \ldots,1)^T$ is an eigenvector of the transpose $V^T$.
In order to resolve this problem, we focus on the structure of the matrix $\overline{W}_{\vec{\theta}}$.
We denote the Perron-Frobenius eigenvectors
of $\overline{W}_{\vec{\theta}}$ and its transpose $\overline{W}_{\vec{\theta}}^T$
by $\overline{P}^2_{\vec{\theta}}$ and $\overline{P}^3_{\vec{\theta}}$.
Then, 
similar to \cite[(16)]{NK} \cite[(2)]{HN},
we define the matrix ${W}_{\vec{\theta}}(x|x')$ as
\begin{align}
{W}_{\vec{\theta}}(x|x'):= \lambda_{\vec{\theta}}^{-1} \overline{P}^3_{\vec{\theta}}(x)
\overline{W}_{\vec{\theta}}(x|x')\overline{P}^3_{\vec{\theta}}(x')^{-1}.\Label{12-26-1}
\end{align}
The matrix ${W}_{\vec{\theta}}(x|x')$ is 
a transition matrix because the vector $(1, \ldots,1)^T$ is an eigenvector of the transpose $W_{\vec{\theta}}^T$.
The stationary distribution of the given 
transition matrix ${W}_{\vec{\theta}}$
is the Perron-Frobenius normalized eigenvector of the transition matrix ${W}_{\vec{\theta}}$,
which is given as
\begin{align}
\overline{P}^1_{\vec{\theta}}(x):=
\frac{\overline{P}^3_{\vec{\theta}}(x)\overline{P}^2_{\vec{\theta}}(x)}{
\sum_{x''}\overline{P}^3_{\vec{\theta}}(x'')\overline{P}^2_{\vec{\theta}}(x'')}
\end{align}
because 
\begin{align*}
& \sum_{x'}W_{\vec{\theta}}(x|x') \overline{P}^1_{\vec{\theta}}(x')
=
\frac{\overline{P}^3_{\vec{\theta}}(x)}{
\lambda_{\vec{\theta}} \sum_{x''}\overline{P}^3_{\vec{\theta}}(x'')
\overline{P}^2_{\vec{\theta}}(x'')}
\sum_{x'}\overline{W}_{\vec{\theta}}(x|x') 
\overline{P}^2_{\vec{\theta}}(x') \nonumber \\
=&
\frac{\overline{P}^3_{\vec{\theta}}(x) \overline{P}^2_{\vec{\theta}}(x)}{
\sum_{x''}\overline{P}^3_{\vec{\theta}}(x'')\overline{P}^2_{\vec{\theta}}(x'')}
=
\overline{P}^1_{\vec{\theta}}(x).
\end{align*}
In the following, we call the family of transition matrices 
${\cal E}:=\{ {W}_{\vec{\theta}} \}$ an {\it exponential family} of transition matrices 
generated by $W$ with the generator $\{g_1,\ldots,g_d\}$.

Since the generator $\{g_1,\ldots,g_d\}$ is linearly independent,
due to Lemma \ref{L1},
$\sum_{i,j}c_i c_j\frac{\partial^2 \phi}{\partial \theta^i\partial \theta^j}=
\frac{d^2 \phi(\vec{c}t)}{d t^2}$ 
is strictly positive for an arbitrary non-zero vector $\vec{c}=(c_1, \ldots, c_d)$.
That is, the Hesse matrix 
$\mathsf{H}_{\vec{\theta}} [\phi]=[\frac{\partial^2 \phi}{\partial \theta^i\partial \theta^j}]_{i,j}$ 
is non-negative.

Using the potential function $\phi(\theta)$,
we discuss several concepts 
for transition matrices based on Lemma \ref{L1}, formally.
We call the parameter $(\theta^1,\ldots,\theta^d)$ the {\it natural parameter}, and the parameter $\eta_j(\vec{\theta}):= 
\frac{\partial \phi}{\partial\theta^j}(\vec{\theta})$ the {\it expectation parameter}.
For $\vec{\eta}=(\eta_1, \ldots,\eta_d)$,
we define $\theta^1(\vec{\eta}), \ldots, \theta^d(\vec{\eta})$
as $\eta_j(\theta^1(\vec{\eta}), \ldots, \theta^d(\vec{\eta}))=\eta_j$.

For a given transition matrix $W$,
we define 
a linear subspace ${\cal N}_W({\cal X}^2)$ of
the space ${\cal G}({\cal X}^2)$ of all two-input functions
as the set of functions $f(x)-f(x')+c$.
Then, we obtain the following lemma.
\begin{lemma}\Label{L1-14-2}
The following are equivalent for the generator $\{g_j\}$
and the transition matrix $W$.
\begin{itemize}
\item[(1)]
The set of functions $\{g_j\}$ are linearly independent in the 
quotient space ${\cal G}({\cal X}^2)/{\cal N}_W({\cal X}^2)$.

\item[(2)]
The map $\vec{\theta} \to \vec{\eta}(\vec{\theta})$
is one-to-one.

\item[(3)]
The Hesse matrix $\mathsf{H}_{\vec{\theta}} [\phi]$ is strictly positive for any $\vec{\theta}$,
which implies the strict convexity of the potential function $\phi(\vec{\theta})$.

\item[(4)]
The Hesse matrix $\mathsf{H}_{\vec{\theta}} [\phi]|_{\vec{\theta}=0}$ is strictly positive.

\item[(5)]
The parametrization 
$\vec{\theta}\mapsto W_{\vec{\theta}}$ is faithful for any $\vec{\theta}$.
\end{itemize}
\end{lemma}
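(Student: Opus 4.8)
\emph{Approach.} The plan is to reduce every one of the five statements to the one-parameter Lemma~\ref{L1} by restricting the potential $\phi$ to straight lines $t\mapsto\vec{\theta}_0+t\vec{c}$. The key preliminary is a \emph{re-basing identity}. Fix $\vec{\theta}_0$ and $\vec{c}=(c_1,\dots,c_d)$, put $h:=\sum_j c_j g_j$, and let $\psi(t)$ be the logarithm of the Perron--Frobenius eigenvalue of $W_{\vec{\theta}_0}(x|x')e^{th(x,x')}$. Since $\overline{W}_{\vec{\theta}_0+t\vec{c}}(x|x')=\overline{W}_{\vec{\theta}_0}(x|x')e^{th(x,x')}$ by \eqref{5-6} and $W_{\vec{\theta}_0}$ arises from $\overline{W}_{\vec{\theta}_0}$ by the diagonal conjugation and rescaling in \eqref{12-26-1}, the matrix $W_{\vec{\theta}_0}(x|x')e^{th(x,x')}$ equals $\lambda_{\vec{\theta}_0}^{-1}$ times a diagonal conjugate of $\overline{W}_{\vec{\theta}_0+t\vec{c}}$; hence $\psi(t)=\phi(\vec{\theta}_0+t\vec{c})-\phi(\vec{\theta}_0)$ and in particular $\psi''(0)=\vec{c}^{\,T}\mathsf{H}_{\vec{\theta}_0}[\phi]\,\vec{c}$. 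Moreover $\cX^2_{W_{\vec{\theta}_0}}=\cX^2_W$ because the reweighting in \eqref{5-6} is strictly positive, so $h\in\cN_W(\cX^2)$ exactly when $h$ violates condition~(1) of Lemma~\ref{L1} relative to $W_{\vec{\theta}_0}$.

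Given this, $(1)\Leftrightarrow(3)\Leftrightarrow(4)$ is immediate: the equivalence $(1)\Leftrightarrow(3)$ of Lemma~\ref{L1} applied to $W_{\vec{\theta}_0}$ and $h$ gives $\vec{c}^{\,T}\mathsf{H}_{\vec{\theta}_0}[\phi]\,\vec{c}>0$ iff $\sum_j c_j g_j\notin\cN_W(\cX^2)$; letting $\vec{c}$ range over nonzero vectors and $\vec{\theta}_0$ over $\bR^d$ identifies (1) with ``$\mathsf{H}_{\vec{\theta}}[\phi]>0$ for all $\vec{\theta}$'', i.e.\ (3), and taking $\vec{\theta}_0=0$ identifies it with (4); conversely, if (1) fails the witnessing $\vec{c}$ makes $\mathsf{H}_{\vec{\theta}}[\phi]$ singular at every $\vec{\theta}$, killing (3) and (4). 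Then $(3)\Rightarrow(2)$ is a one-line convexity estimate, $(\vec{\theta}_1-\vec{\theta}_2)^{T}\bigl(\vec{\eta}(\vec{\theta}_1)-\vec{\eta}(\vec{\theta}_2)\bigr)=\int_0^1(\vec{\theta}_1-\vec{\theta}_2)^{T}\mathsf{H}_{\vec{\theta}_2+t(\vec{\theta}_1-\vec{\theta}_2)}[\phi](\vec{\theta}_1-\vec{\theta}_2)\,dt>0$ for $\vec{\theta}_1\neq\vec{\theta}_2$, so $\vec{\eta}$ is injective. For $(2)\Rightarrow(1)$ I argue by contraposition: if $\sum_j c_j g_j=f(x)-f(x')+c'$ on $\cX^2_W$ with $\vec{c}\neq0$, then $\overline{W}_{\vec{\theta}+t\vec{c}}=e^{tc'}D_t\,\overline{W}_{\vec{\theta}}\,D_t^{-1}$ with $D_t=\mathrm{diag}(e^{tf(x)})$, so $\phi(\vec{\theta}+t\vec{c})=\phi(\vec{\theta})+tc'$ for all $\vec{\theta},t$; setting $t=1$ and differentiating in $\vec{\theta}$ gives $\vec{\eta}(\vec{\theta}+\vec{c})=\vec{\eta}(\vec{\theta})$, so $\vec{\eta}$ is not one-to-one. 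This closes the cycle among (1)--(4).

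It remains to fit in (5). For $(1)\Rightarrow(5)$: by \eqref{12-26-1}, for $(x,x')\in\cX^2_W$ one has $\log\frac{W_{\vec{\theta}}(x|x')}{W(x|x')}=-\phi(\vec{\theta})+\log\overline{P}^3_{\vec{\theta}}(x)-\log\overline{P}^3_{\vec{\theta}}(x')+\sum_j\theta^j g_j(x,x')$ (using $\overline{P}^3_{\vec{\theta}}>0$). If $W_{\vec{\theta}_1}=W_{\vec{\theta}_2}$, subtracting the two identities yields $\sum_j(\theta_1^j-\theta_2^j)g_j(x,x')=\tilde f(x)-\tilde f(x')+c$ on $\cX^2_W$ with $\tilde f(x):=\log\overline{P}^3_{\vec{\theta}_2}(x)-\log\overline{P}^3_{\vec{\theta}_1}(x)$ and $c:=\phi(\vec{\theta}_1)-\phi(\vec{\theta}_2)$; thus $\sum_j(\theta_1^j-\theta_2^j)g_j\in\cN_W(\cX^2)$, and (1) forces $\vec{\theta}_1=\vec{\theta}_2$, which is (5). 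Conversely, if (1) fails with $\sum_j c_j g_j=f(x)-f(x')+c'$, then under the conjugation $\overline{W}_{\vec{\theta}+\vec{c}}=e^{c'}D_1\overline{W}_{\vec{\theta}}D_1^{-1}$ the Perron--Frobenius data transform as $\lambda_{\vec{\theta}+\vec{c}}=e^{c'}\lambda_{\vec{\theta}}$ and $\overline{P}^3_{\vec{\theta}+\vec{c}}\propto(e^{-f(x)}\overline{P}^3_{\vec{\theta}}(x))_x$; substituting into \eqref{12-26-1} all these factors cancel and $W_{\vec{\theta}+\vec{c}}=W_{\vec{\theta}}$, so $\vec{\theta}\mapsto W_{\vec{\theta}}$ is not faithful and (5) fails.

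The only part requiring genuine care is the bookkeeping in the first paragraph — the re-basing identity $\psi(t)=\phi(\vec{\theta}_0+t\vec{c})-\phi(\vec{\theta}_0)$ and the invariance $\cX^2_{W_{\vec{\theta}_0}}=\cX^2_W$ — together with the parallel diagonal-conjugation computations of Perron--Frobenius eigenvectors used for $(2)\Rightarrow(1)$ and for $(1)\Leftrightarrow(5)$; once these are pinned down, every implication is a direct appeal to Lemma~\ref{L1}, an elementary convexity inequality, or linear algebra, so no new analytic ingredient is needed.
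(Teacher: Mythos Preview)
Your proof is correct and follows essentially the same route as the paper's: reduce (1)$\Leftrightarrow$(3)$\Leftrightarrow$(4) to Lemma~\ref{L1} along lines, get (3)$\Rightarrow$(2) from strict convexity, and handle (2)$\Rightarrow$(1) and (1)$\Leftrightarrow$(5) by the diagonal-conjugation/logarithm computation. Your version is in fact more careful than the paper's in two places --- you make the re-basing $\psi(t)=\phi(\vec{\theta}_0+t\vec{c})-\phi(\vec{\theta}_0)$ explicit (needed so that Lemma~\ref{L1}, stated for a transition matrix, applies at an arbitrary $\vec{\theta}_0$ rather than only along lines through the origin), and in (2)$\Rightarrow$(1) you spell out why $\phi(\vec{\theta}+t\vec{c})=\phi(\vec{\theta})+tc'$ forces $\vec{\eta}(\vec{\theta}+\vec{c})=\vec{\eta}(\vec{\theta})$, whereas the paper jumps from $\frac{d^2}{dt^2}\phi(\vec{c}t)=0$ directly to ``(2) does not hold.''
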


\begin{proof}
Applying Lemma \ref{L1} to $\phi(\vec{c}t)$
for an arbitrary non-zero vector $\vec{c}=(c_1, \ldots, c_d
)$, we obtain 
the equivalence among (1), (3), and (4).
(3) $\Rightarrow$ (2) is trivial.

Now, we show (2) $\Rightarrow$ (1) by showing the contraposition.
If (1) does not holds.
There exists a non-zero vector $\vec{c}=(c_1, \ldots, c_d
)$ such that 
$\sum_i c_i g_i(x,x')= f(x)-f(x')+C$.
Hence,
we have $\frac{d^2 \phi(\vec{c}t)}{d t^2}=0$.
Hence, (2) does not hold.

Now, we show (1) $\Rightarrow$ (5) by showing the contraposition.
When $W_{\vec{\theta'}}=W_{\vec{\theta}}$,
considering the logarithm,
there exist a function $f$ and a constant $c$ such that
$\sum_{j}{\theta'}^j g_j(x,x') -\sum_{j}{\theta}^j g_j(x,x') = f(x)-f(x')+C$
for $(x,x')\in \cX^2_W$.

Now, we show (5) $\Rightarrow$ (1) by showing the contraposition.
If a set of real-valued functions $\{g_j\}$ on $\cX \times \cX$ is not linearly independent,
there exist a function $f$ and a constant $C$ such that
$\sum_{j}{\theta'}^j g_j(x,x') -\sum_{j}{\theta}^j g_j(x,x')
= f(x)-f(x')+C$.
In this case, 
choosing 
$\overline{P}^3_{\vec{\theta'}}(x)=
\overline{P}^3_{\vec{\theta}}(x) e^{f(x)}$
and 
$\lambda_{\vec{\theta'}}=\lambda_{\vec{\theta}}e^{-C}$,
$\overline{P}^3_{\vec{\theta'}}$ 
and $\lambda_{\vec{\theta'}}$ are
the Perron-Frobenius eigenvector and eigenvalue of the transition matrix ${W}_{\vec{\theta'}}$.
Then, we have 
$W_{\vec{\theta'}}=W_{\vec{\theta}}$.
\end{proof}

Now, we introduce the notation
${\cal W}_{\cX,W}:= \{V| V $ is a transition matrix and $
{\cal X}^2_{W}={\cal X}^2_V \}$.
Any element $W'\in {\cal W}_{\cX,W}$ can be written as
$W'(x|x')= W(x|x')e^{g(x,x')}$ by using an element 
$g \in {\cal G}({\cal X}^2)$
because of $\log \frac{W'(x|x')}{W(x|x')}\in {\cal G}({\cal X}^2)$.
Hence, if and only if 
the set of two-input functions $\{g_j\}$ form a basis of
the quotient space ${\cal G}({\cal X}^2)/{\cal N}_W({\cal X}^2)$,
the set ${\cal W}_{\cX,W}$ coincides with
the exponential family generated by $W$ with the generator $\{g_j\}$.
This fact shows that 
${\cal W}_{\cX,W}$ is an exponential family.

In particular, when $W$ is a positive transition matrix,
the subspace ${\cal N}_W({\cal X}^2)$ does not depend on $W$
and is abbreviated to ${\cal N}({\cal X}^2)$.
In this case,
${\cal W}_{\cX,W}$ is the set of positive transition matrices.
Then, it does not depend on $W$, and is abbreviated to ${\cal W}_{\cX}$.

We define the Fisher information matrix for the natural parameter by 
the Hesse matrix
$\mathsf{H}_{\vec{\theta}} [\phi]
:=[\frac{\partial^2 \phi}{\partial \theta^i\partial \theta^j}(\vec{\theta})]_{i,j}$.
The Fisher information matrix for the expectation parameter 
is given as 
$\mathsf{H}_{\vec{\theta}} [\phi]^{-1}$.
Further, 
for fixed values $\theta_o^{k+1},\ldots,\theta_o^d$,
we call the subset $\{W_{\vec{\theta}}\in {\cal E}|
\vec{\theta}=(\theta^1,\ldots,\theta^{k},\theta_o^{k+1},\ldots,\theta_o^d)
\}$ an exponential subfamily of ${\cal E}$.
The following are examples of an exponential family.

\begin{example}\Label{1-3-8}
Now, we assume that ${\cal X}=\{0,1, \ldots, m\}$ and 
$W$ is a positive transition matrix, i.e., ${\cal X}^2_{W}={\cal X}^2$.
Define $g_{i,j}(x,x')= \delta_{x,i}\delta_{x',j}$ for $i=1, \ldots,m$ and $j=0,1, \ldots,m$.
Then, the $m^2+m$ functions $g_{i,j}$ 
form a basis of the quotient space 
${\cal G}({\cal X}^2)/{\cal N}({\cal X}^2)$.
Therefore,
the set of positive transition matrices
forms an exponential family with the above choice of $g_{i,j}$. 
\end{example}
 
\begin{example}\Label{1-3-8b}
For a given subset ${\cal S} \subset {\cal X}^2$ for ${\cal X}=\{0,1, \ldots, m\}$, we choose a transition matrix $W$ whose support is ${\cal S} $.
Define the subset $\tilde{{\cal S} }$ as 
$ \{(i,j)\in S| i $ is not minimum integer satisfying 
$(i,j)\in {\cal S} $ for a fixed $j\}$.
We define $g_{i,j}(x,x')= \delta_{x,i}\delta_{x',j}$ for 
$(i,j)\in \tilde{{\cal S} }$.
Then,
the set ${\cal W}_{{\cal X},W}$ is an exponential family generated by
$\{g_{i,j}\}_{(i,j)\in \tilde{{\cal S} }}$.
However, the set ${\cal W}_{{\cal X},W}$ is not an exponential subfamily 
of the set of positive transition matrices
because it is not included in the set of positive transition matrices.
\end{example}

\begin{remark}
The above-defined exponential families contain
exponential families of distributions as follows.
For a given exponential family of distributions
$P_\theta$ on $\cX$ with the generator $f(x)$,
we define the transition matrix $W(x|x')$ as
$P_0(x)$ and the generator $g(x,x')$ as $f(x)$.
Then, the exponential family $W_{\theta}(x|x')$ is
$P_{\theta}(x)$.
The given potential function and the given expectation parameter (defined in the next subsection)
are the same as those in the case with the exponential family of distributions
$\{P_\theta\}$.
\end{remark}

\begin{remark}
The papers \cite{Feigin,KS,Hudson,Bhat} called 
a family of transition matrices $\{W_\theta(x|x')\}$ an exponential family
when $W_\theta(x|x')$ has the form
\begin{align}
W_\theta(x|x')=
e^{C(x,x')+\theta g(x,x')- \psi(\theta,x')}.\Label{1-3}
\end{align}
The papers \cite{Stefanov,Kuchler-Sorensen,Sorensen} extended the above definition to the continuous-time case.
However, our exponential family is written as \cite{HN}
\begin{align}
W_\theta(x|x')=
e^{C(x,x')+\theta g(x,x')+ \psi(\theta,x)- \psi(\theta,x')- \phi(\theta)}.
\Label{1-3b}
\end{align}
by choosing $C(x,x')$ and $\psi(\theta,x)$ 
as $\log W(x|x')$ and $\log \overline{P}^3_{\vec{\theta}}(x)$, respectively.
So, the traditional definition (\ref{1-3}) is different from ours.
The advantage of our model over their model is explained in Remark \ref{rem8-4}.

\end{remark}

\subsection{Mixture family}\Label{s4-5-2}
In the following, we assume that
the functions $\{g_j\}$ satisfies the condition of Lemma \ref{L1-14-2}.
For fixed values $\eta_{o,1},\ldots,\eta_{o,k}$,
we call the subset $\{W_{\vec{\theta}}\in {\cal E}|
\vec{\eta}(\vec{\theta})=(\eta_{o,1},\ldots,\eta_{o,k},\eta_{k+1},\ldots,\eta_d)
\}$ a {\it mixture subfamily of ${\cal E}$}.
Given a transition matrix $W$, 
real-valued functions $g_{j}$ on $\cX^2$, and real numbers $b_j$,
we say that the set $\{V \in {\cal W}_{\cX,W}| \sum_{x,x'}g_j(x,x')V(x|x')P_{V}(x')=b_j \forall j \}$ is a {\it mixture family on ${\cal X}^2_W$ 
generated by the constraints $\{g_j=b_j\}$}.
Note that a mixture family on ${\cal X}^2_W$ 
does not necessarily contain $W$
because its definition depends on the real numbers $b_j$.
When $W$ is a positive transition matrix, 
it is simply called a {\it mixture family generated by the constraints 
$\{g_j=b_j\}$}
because ${\cal W}_{\cX,W}$ is the set of positive transition matrices.
For a given transition matrix $W$ and 
two mixture families ${\cal M}_1$ and ${\cal M}_2$ on ${\cal X}^2_W$,
the intersection ${\cal M}_1 \cap {\cal M}_2$ is also a mixture family
on ${\cal X}^2_W$.

\begin{lemma}\Label{L5-1}
The intersection of the mixture family on ${\cal X}^2_W$ generated by the constraints $\{g_j=b_j\}_{j=1,\ldots,k}$
and the exponential family ${\cal W}_{\cX,W}$
is the mixture subfamily 
$\{W_{\vec{\theta}}\in {\cal W}_{\cX,W} |
\vec{\eta}(\vec{\theta})=(b_1,\ldots,b_{k},\eta_{k+1},\ldots,\eta_d)\}$
of the exponential family ${\cal W}_{\cX,W}$.
\end{lemma}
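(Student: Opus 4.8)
Throughout, $\{g_1,\dots,g_d\}$ denotes the fixed generator of the exponential family ${\cal W}_{\cX,W}$ (so it satisfies the condition of Lemma~\ref{L1-14-2}), and the constraint functions $g_1,\dots,g_k$ are understood to be its first $k$ members. My first observation is that the mixture family on ${\cal X}^2_W$ generated by the constraints $\{g_j=b_j\}_{j=1,\dots,k}$ is, by its very definition, already a subset of ${\cal W}_{\cX,W}$; hence the intersection in the statement is simply that mixture family, and the whole content of the lemma is to identify it with the mixture subfamily $\{W_{\vec{\theta}}\in{\cal W}_{\cX,W}\mid \vec{\eta}(\vec{\theta})=(b_1,\dots,b_k,\eta_{k+1},\dots,\eta_d)\}$. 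Since by Lemma~\ref{L1-14-2}(5) every element of ${\cal W}_{\cX,W}$ is uniquely of the form $W_{\vec{\theta}}$, this will follow once I establish that the expectation parameter is the genuine stationary average of the generator, i.e.\ the identity
\begin{align}
\eta_j(\vec{\theta})\;=\;\frac{\partial\phi}{\partial\theta^j}(\vec{\theta})\;=\;\sum_{x,x'}g_j(x,x')\,W_{\vec{\theta}}(x|x')\,P_{W_{\vec{\theta}}}(x').
\label{eq:exp-param-is-expectation}
\end{align}

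To prove \eqref{eq:exp-param-is-expectation} I would proceed as follows. Since $\overline{W}_{\vec{\theta}}$ is irreducible with entries depending analytically on $\vec{\theta}$, its Perron--Frobenius eigenvalue $\lambda_{\vec{\theta}}$ is simple, so $\lambda_{\vec{\theta}}$ and the eigenvectors $\overline{P}^2_{\vec{\theta}},\overline{P}^3_{\vec{\theta}}$ can be chosen to depend smoothly on $\vec{\theta}$; I normalize them by $\sum_x\overline{P}^3_{\vec{\theta}}(x)\overline{P}^2_{\vec{\theta}}(x)=1$, which makes $\overline{P}^1_{\vec{\theta}}(x)=\overline{P}^3_{\vec{\theta}}(x)\overline{P}^2_{\vec{\theta}}(x)$. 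Using the eigenvector equations for $\overline{W}_{\vec{\theta}}$ together with this normalization gives the scalar relation $\lambda_{\vec{\theta}}=\sum_{x,x'}\overline{P}^3_{\vec{\theta}}(x)\overline{W}_{\vec{\theta}}(x|x')\overline{P}^2_{\vec{\theta}}(x')$. Differentiating in $\theta^j$ and invoking the left and right eigenvector equations again, the two terms in which the derivative falls on $\overline{P}^3_{\vec{\theta}}$ or on $\overline{P}^2_{\vec{\theta}}$ combine into $\lambda_{\vec{\theta}}\,\frac{\partial}{\partial\theta^j}\big(\sum_x\overline{P}^3_{\vec{\theta}}(x)\overline{P}^2_{\vec{\theta}}(x)\big)=0$, and since $\frac{\partial}{\partial\theta^j}\overline{W}_{\vec{\theta}}(x|x')=g_j(x,x')\overline{W}_{\vec{\theta}}(x|x')$ by \eqref{5-6}, what remains is $\frac{\partial}{\partial\theta^j}\lambda_{\vec{\theta}}=\sum_{x,x'}g_j(x,x')\overline{P}^3_{\vec{\theta}}(x)\overline{W}_{\vec{\theta}}(x|x')\overline{P}^2_{\vec{\theta}}(x')$. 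Dividing by $\lambda_{\vec{\theta}}$ and recognizing, via \eqref{12-26-1}, that $\lambda_{\vec{\theta}}^{-1}\overline{P}^3_{\vec{\theta}}(x)\overline{W}_{\vec{\theta}}(x|x')\overline{P}^2_{\vec{\theta}}(x')=W_{\vec{\theta}}(x|x')\overline{P}^3_{\vec{\theta}}(x')\overline{P}^2_{\vec{\theta}}(x')=W_{\vec{\theta}}(x|x')\overline{P}^1_{\vec{\theta}}(x')$ with $\overline{P}^1_{\vec{\theta}}=P_{W_{\vec{\theta}}}$, yields \eqref{eq:exp-param-is-expectation}.

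Granting \eqref{eq:exp-param-is-expectation}, the lemma is immediate: for $V=W_{\vec{\theta}}\in{\cal W}_{\cX,W}$ one has $\sum_{x,x'}g_j(x,x')V(x|x')P_V(x')=\eta_j(\vec{\theta})$ for each $j$, so $V$ belongs to the mixture family generated by $\{g_j=b_j\}_{j=1,\dots,k}$ exactly when $\eta_j(\vec{\theta})=b_j$ for $j=1,\dots,k$ — that is, when $\vec{\eta}(\vec{\theta})=(b_1,\dots,b_k,\eta_{k+1},\dots,\eta_d)$ for some $\eta_{k+1},\dots,\eta_d$ — which is precisely membership in the stated mixture subfamily. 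Hence the two sets coincide. The only genuinely delicate point is the perturbation-theoretic one: smoothness of $\lambda_{\vec{\theta}}$ and of the Perron--Frobenius eigenvectors in $\vec{\theta}$, which is what licenses differentiating the scalar relation (and is needed in any case to treat $\phi$ as a differentiable potential). Once that is in place, the cancellation of the eigenvector-derivative terms is the only manipulation requiring a moment's care, and the rest is bookkeeping.
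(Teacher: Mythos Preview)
Your proposal is correct and follows the same logical skeleton as the paper: both proofs reduce Lemma~\ref{L5-1} to the identity
\[
\eta_j(\vec{\theta})=\frac{\partial\phi}{\partial\theta^j}(\vec{\theta})=\sum_{x,x'}g_j(x,x')\,W_{\vec{\theta}}(x|x')\,P_{W_{\vec{\theta}}}(x'),
\]
after which membership in the mixture family is literally the condition $\eta_j(\vec{\theta})=b_j$ for $j\le k$. The only difference is in how this identity is obtained. The paper (Lemma~\ref{lemma:markov-derivative-expectation}) differentiates $\log W_\theta(x|x')$ using \eqref{12-26-1} and then averages under $W_\theta\times\overline{P}^1_\theta$: the left side vanishes because $\sum_x W_\theta(x|x')=1$, and the telescoping term $\frac{d}{d\theta}\log\overline{P}^3_\theta(X)-\frac{d}{d\theta}\log\overline{P}^3_\theta(X')$ vanishes because both marginals equal the stationary law. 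You instead differentiate the bilinear form $\lambda_{\vec{\theta}}=\langle\overline{P}^3_{\vec{\theta}},\overline{W}_{\vec{\theta}}\overline{P}^2_{\vec{\theta}}\rangle$ directly and use the left/right eigenvector equations so that the eigenvector-derivative terms cancel via the normalization $\sum_x\overline{P}^3_{\vec{\theta}}(x)\overline{P}^2_{\vec{\theta}}(x)=1$---a Hellmann--Feynman style argument. Both computations are standard and equally short; yours has the minor advantage of not needing to observe that the two marginals coincide, while the paper's score-function route is the one that naturally extends to the Fisher-information calculations in Lemmas~\ref{L11} and~\ref{L11-2}.
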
 
Lemma \ref{L5-1} will be shown after Lemma \ref{lemma:markov-derivative-expectation} in Section \ref{s5}.
Here, we give examples for mixture families.

\begin{example}\Label{ex5}
A transition matrix $W$ on ${\cal X} \times {\cal Y}$ is called {\it non-hidden} for ${\cal X}$
when $W_X(x|x'):=\sum_{y \in {\cal Y}}W(x,y|x',y')$ does not depend on $y'\in {\cal Y}$.
For a transition matrix $W$ on $\cX \times \cY$,
the set ${\cal W}_{\cX|\cX\times \cY,W}
:=\{ V \in {\cal W}_{\cX\times \cY,W}|V$ is non-hidden for ${\cal X}$ on ${\cal X} \times {\cal Y}\}$ 
is a mixture family on $({\cal X}\times \cY)^2_W$.
Hence, the set 
${\cal W}_{\cX|\cX\times \cY,W} \cap {\cal W}_{\cY|\cX\times \cY,W}$
is also a mixture family on ${\cal X}^2_W$.
\end{example}

\begin{example}\Label{1-3-8c}
The set of bi-stochastic matrices on ${\cal X}=\{0,1, \ldots, m\}$ 
forms a mixture family as follows.
For a permutation $\sigma$, we define the transition matrix $W_{\sigma}(x|x')= \delta_{x,\sigma x'}$.
Then, we focus on the set $T$ of transpositions $(i,j)$
and the subset $H$ of cyclic permutations with length $3$
defined by $H:=\{(0,i,j)| 0<i<j\le m \}$.
Then, $|T\cup H|= |T| + | H|= \frac{m(m+1)}{2}+ \frac{m(m-1)}{2}=m^2$.
As will be shown in Appendix \ref{as2},
The set of bi-stochastic matrices on ${\cal X}=\{0,1, \ldots, m\}$ is parametrized as 
$\{ W_{\vec{\eta}} \}_{\vec{\eta} \in \Eta}$, where
\begin{align}
W_{\vec{\eta}}&:=
\sum_{\sigma \in T \cup H} \eta_{\sigma} W_\sigma 
+ (1- \sum_{\sigma \in T \cup H} \eta_{\sigma}) W_{id} \\
\Eta &:= \{\eta \in \mathbb{R}^{m^2}| 
W_{\vec{\eta}}(x|x') \ge 0 \hbox{ for } \forall x,x'\in {\cal X} \}.
\end{align}
We define the functions 
\begin{align}
g_i(x,x') &:= \delta_{x,i}-\delta_{x,0} \hbox{ for }i=1, \ldots,m \\
\hat{g}_\sigma(x,x') &:= W_\sigma (x|x')- W_{id}(x|x').
\end{align}
As will be shown in Appendix \ref{as2},
the set $\{g_i\}_{i=1}^{m}\cup \{\hat{g}_\sigma\}_{\sigma \in T \cup H}$
is linearly independent.
Then, the matrix $A=(a_{\sigma,\sigma'})$ given as follows is invertible:
\begin{align}
a_{\sigma,\sigma'}:=
\sum_{x,x'} \hat{g}_{\sigma'}(x,x') \hat{g}_\sigma (x,x')\frac{1}{m+1} .
\end{align}
Then, using the inverse matrix $B=A^{-1}$,
we can define the functions $\{g_\sigma\}_{\sigma \in T \cup H}$ 
as the dual basis in the following way:
\begin{align}
g_{\sigma'} :=
\sum_{\sigma \in T \cup H} b_{\sigma,\sigma'} \hat{g}_\sigma ,
\end{align}
which implies that
\begin{align}
\sum_{x,x'} g_{\sigma'}(x,x') \hat{g}_\sigma (x,x')\frac{1}{m+1} = \delta_{\sigma,\sigma'}.
\end{align}
Hence, the set of functions $\{g_i\}_{i=1}^m \cup \{g_\sigma\}_{\sigma \in T \cup H}$ is linearly independent.
We can employ the mixture parameter under the above set of functions.
Since the stationary distribution of $W_{\vec{\eta}}$ is the uniform distribution and
\begin{align}
\sum g_i(x,x') W_{\vec{\eta}}(x|x')\frac{1}{m+1} &= 0 \hbox{ for } i= 1, \ldots, m, \\
\sum g_\sigma(x,x') W_{\vec{\eta}}(x|x')\frac{1}{m+1} &= \eta_\sigma \hbox{ for } \sigma \in T \cup H, 
\end{align}
the transition matrix $W_{\vec{\eta}}(x|x')$ 
is the expectation parameter 
$(0, \ldots, 0, \eta_{\sigma})$.
That is, the set of bi-stochastic matrices on ${\cal X}$ 
is the mixture family generated by the constraints $\{g_j=0\}_{j=1,\ldots,m}$.
\end{example}

\subsection{Relation with relative entropy and relative R\'{e}nyi entropies}
The relative entropy and the relative R\'{e}nyi entropies are characterized by using the potential function $\phi(\vec{\theta})$
as follows. 
\begin{lemma}\Label{L7}
Two transition matrices 
${W}_{\vec{\theta}}$ and ${W}_{\vec{\theta}'}$ satisfies 
\begin{align}
D({W}_{\vec{\theta}} \| {W}_{\vec{\theta}'})= &
\sum_{j=1}^d
(\theta^j-{\theta'}^j)\frac{\partial \phi}{\partial \theta^j}(\vec{\theta})- \phi(\vec{\theta})+ \phi(\vec{\theta}') \Label{1-1}\\
D_{1+s}({W}_{\vec{\theta}} \| {W}_{\vec{\theta}'})=&
\frac{\phi((1+s)\vec{\theta} -s \vec{\theta}')-(1+s) \phi(\vec{\theta}) +s \phi(\vec{\theta}')}{s}.
 \Label{1-2}
\end{align}
\end{lemma}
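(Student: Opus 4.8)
The plan is to compute, in closed form, the function $\varphi(1+s)$ attached to the pair $(W_{\vec{\theta}},W_{\vec{\theta}'})$, i.e.\ the logarithm of the Perron--Frobenius eigenvalue of the matrix $\big[W_{\vec{\theta}}(x|x')^{1+s}\,W_{\vec{\theta}'}(x|x')^{-s}\big]$. Once $\varphi$ is known, \eqref{1-2} is immediate from $D_{1+s}=\varphi(1+s)/s$, and \eqref{1-1} follows by differentiating $\varphi$ at the point $1$, as in \eqref{1-10}. The whole argument is pure linear algebra and calculus; Lemma~\ref{L1} is used only implicitly, to guarantee that the objects involved are well defined.

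First I would substitute the explicit form \eqref{12-26-1}, namely $W_{\vec{\theta}}(x|x')=\lambda_{\vec{\theta}}^{-1}\,\overline{P}^3_{\vec{\theta}}(x)\,\overline{W}_{\vec{\theta}}(x|x')\,\overline{P}^3_{\vec{\theta}}(x')^{-1}$ with $\overline{W}_{\vec{\theta}}$ as in \eqref{5-6}, into the entrywise product $W_{\vec{\theta}}(x|x')^{1+s}W_{\vec{\theta}'}(x|x')^{-s}$. Since $s>-1$ we have $1+s>0$, and (as checked at the end) the two matrices share the support $\cX^2_W$, so $W(x|x')^{1+s}W(x|x')^{-s}$ equals $W(x|x')$ on $\cX^2_W$ and $0$ off it. Separating the scalar, the row, and the column factors, the product becomes
\[
W_{\vec{\theta}}(x|x')^{1+s}W_{\vec{\theta}'}(x|x')^{-s}
=\lambda_{\vec{\theta}}^{-(1+s)}\lambda_{\vec{\theta}'}^{\,s}\; d(x)\,\overline{W}_{(1+s)\vec{\theta}-s\vec{\theta}'}(x|x')\,d(x')^{-1},
\]
where $d(x):=\overline{P}^3_{\vec{\theta}}(x)^{1+s}\,\overline{P}^3_{\vec{\theta}'}(x)^{-s}>0$. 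Thus, up to the positive scalar $\lambda_{\vec{\theta}}^{-(1+s)}\lambda_{\vec{\theta}'}^{\,s}$, this matrix is a conjugation of $\overline{W}_{(1+s)\vec{\theta}-s\vec{\theta}'}$ by the positive diagonal matrix with diagonal entries $d(x)$. Such a conjugation preserves non-negativity, irreducibility, and the full spectrum, so the Perron--Frobenius eigenvalue of the product equals $\lambda_{\vec{\theta}}^{-(1+s)}\lambda_{\vec{\theta}'}^{\,s}\,\lambda_{(1+s)\vec{\theta}-s\vec{\theta}'}$; taking logarithms yields
\[
\varphi(1+s)=\phi\big((1+s)\vec{\theta}-s\vec{\theta}'\big)-(1+s)\phi(\vec{\theta})+s\,\phi(\vec{\theta}').
\]

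Dividing by $s$ gives \eqref{1-2} verbatim. For \eqref{1-1}, I differentiate the last display in $s$ and set $s=0$: by the chain rule $\frac{d}{ds}\phi\big((1+s)\vec{\theta}-s\vec{\theta}'\big)\big|_{s=0}=\sum_{j=1}^d(\theta^j-{\theta'}^j)\frac{\partial\phi}{\partial\theta^j}(\vec{\theta})$, while the remaining two terms contribute $-\phi(\vec{\theta})+\phi(\vec{\theta}')$, which is exactly the right-hand side of \eqref{1-1}; note also that $\varphi(1)=0$, consistently with the Perron--Frobenius eigenvalue of the transition matrix $W_{\vec{\theta}}$ being $1$. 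It remains to check that the definitions of Section~\ref{s4} apply: since $e^{\sum_j\theta^jg_j(x,x')}>0$ and the Perron--Frobenius eigenvector $\overline{P}^3_{\vec{\theta}}$ of the irreducible matrix $\overline{W}_{\vec{\theta}}$ has strictly positive entries, one has $\cX^2_{W_{\vec{\theta}}}=\cX^2_{W_{\vec{\theta}'}}=\cX^2_W$, which is irreducible; hence $\cX^2_{W_{\vec{\theta}}}\subset\cX^2_{W_{\vec{\theta}'}}$ and $\cX^2_{W_{\vec{\theta}}}\cap\cX^2_{W_{\vec{\theta}'}}$ is irreducible, so both formulas hold for every $s\in(-1,0)\cup(0,\infty)$ and $D$ is recovered as the $s\to0$ limit. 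I expect the only delicate point to be bookkeeping the positive-diagonal conjugation cleanly and invoking the correct Perron--Frobenius invariance statement; everything else is a direct substitution.
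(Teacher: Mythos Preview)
Your proof is correct and follows essentially the same approach as the paper: compute $\varphi(1+s)=\phi((1+s)\vec{\theta}-s\vec{\theta}')-(1+s)\phi(\vec{\theta})+s\phi(\vec{\theta}')$, divide by $s$ for \eqref{1-2}, and recover \eqref{1-1} at $s=0$. The paper's proof simply asserts the key identity for $\varphi(1+s)$ without justification, whereas you spell out the diagonal-conjugation argument and the support check that make it work; your version is a fleshed-out form of the same argument.
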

\begin{proof}
Let $\varphi(1+s)$ be the logarithm of 
the Perron-Frobenius eigenvalue of the matrix
$W_{\vec{\theta}}(x|x')^{1+s}W_{\vec{\theta}'}(x|x')^{-s}$.
Then, we have $\varphi(1+s)=\phi((1+s)\vec{\theta} -s \vec{\theta}')-(1+s) \phi(\vec{\theta}) +s \phi(\vec{\theta}')$.  
Hence, we obtain (\ref{1-2}).
Taking the limit $s \to 0$, we obtain (\ref{1-1}).
\end{proof}

The Fisher information matrix 
$\mathsf{H}_{\vec{\theta}} [\phi]$
can be characterized by the limits of the 
relative entropy and relative R\'{e}nyi entropy as follows.
That is, taking the limits in \eqref{1-1} and \eqref{1-2} in Lemma \ref{L7}, we can show the following lemma.

\begin{lemma} \Label{L20}
For $\vec{c}=(c_1, \ldots, c_d)$, 
we have
\begin{align}
\Label{27-20}
\lim_{t \to 0}
\frac{2}{t^2}D({W}_{\vec{\theta}} \| {W}_{\vec{\theta}+\vec{c}t})
=&
\lim_{t \to 0}
\frac{2}{t^2}D({W}_{\vec{\theta}+\vec{c}t} \| {W}_{\vec{\theta}})
=\sum_{i,j}\mathsf{H}_{\vec{\theta}} [\phi]_{i,j}c^i c^j \\
\Label{27-21}
\lim_{t \to 0}
\frac{2}{t^2}D_{1+s}({W}_{\vec{\theta}} \| {W}_{\vec{\theta}+\vec{c}t})
=&
\lim_{t \to 0}
\frac{2}{t^2}D_{1+s}({W}_{\vec{\theta}+\vec{c}t} \| {W}_{\vec{\theta}})
=(1+s)\sum_{i,j}\mathsf{H}_{\vec{\theta}} [\phi]_{i,j}c^i c^j .
\end{align}
\end{lemma}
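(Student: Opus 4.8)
The plan is to substitute $\vec{\theta}' = \vec{\theta} + \vec{c}t$ into the closed-form expressions of Lemma \ref{L7} and expand the potential function to second order in $t$ around $\vec{\theta}$. Abbreviate $\phi_j := \frac{\partial \phi}{\partial \theta^j}(\vec{\theta})$ and $\phi_{ij} := \frac{\partial^2 \phi}{\partial \theta^i \partial \theta^j}(\vec{\theta}) = \mathsf{H}_{\vec{\theta}}[\phi]_{ij}$. The needed smoothness of $\phi$ is not an issue: for any fixed vector $\vec{a}$, the function $t \mapsto \phi(\vec{\theta} + \vec{a}t)$ is, up to an additive constant, the potential function attached to the single generator $\sum_j a^j g_j$ as in Lemma \ref{L1}, hence is (real-analytic and in particular) $C^2$ in $t$ by the simplicity of the Perron-Frobenius eigenvalue, which is already used implicitly throughout Section \ref{s4-5}. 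Thus $\phi(\vec{\theta} + \vec{a}t) = \phi(\vec{\theta}) + t\sum_j a^j \phi_j + \frac{t^2}{2}\sum_{i,j} a^i a^j \phi_{ij} + o(t^2)$, and also $\frac{\partial \phi}{\partial \theta^j}(\vec{\theta} + \vec{a}t) = \phi_j + t\sum_i a^i \phi_{ij} + o(t)$.

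For \eqref{27-20}: inserting $\vec{\theta}' = \vec{\theta} + \vec{c}t$ into \eqref{1-1} gives $D({W}_{\vec{\theta}} \| {W}_{\vec{\theta}+\vec{c}t}) = -t\sum_j c^j \phi_j - \phi(\vec{\theta}) + \phi(\vec{\theta} + \vec{c}t)$; expanding the last term, the constant and $O(t)$ contributions cancel and the remainder is $\frac{t^2}{2}\sum_{i,j} c^i c^j \phi_{ij} + o(t^2)$, so multiplication by $2/t^2$ yields $\sum_{i,j}\mathsf{H}_{\vec{\theta}}[\phi]_{ij} c^i c^j$. For the reversed direction, substituting $\vec{\theta} \mapsto \vec{\theta} + \vec{c}t$ and $\vec{\theta}' \mapsto \vec{\theta}$ into \eqref{1-1} gives $D({W}_{\vec{\theta}+\vec{c}t} \| {W}_{\vec{\theta}}) = t\sum_j c^j \frac{\partial \phi}{\partial \theta^j}(\vec{\theta} + \vec{c}t) - \phi(\vec{\theta} + \vec{c}t) + \phi(\vec{\theta})$; expanding the derivative to order $t$ in the first term and $\phi$ to order $t^2$ in the second, the $t^2$ coefficient is $\sum_{i,j}c^i c^j \phi_{ij} - \tfrac12\sum_{i,j}c^i c^j \phi_{ij} = \tfrac12\sum_{i,j}c^i c^j\phi_{ij}$, giving the same limit.

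For \eqref{27-21}: inserting $\vec{\theta}' = \vec{\theta} + \vec{c}t$ into \eqref{1-2} gives $s\,D_{1+s}({W}_{\vec{\theta}} \| {W}_{\vec{\theta}+\vec{c}t}) = \phi(\vec{\theta} - s\vec{c}t) - (1+s)\phi(\vec{\theta}) + s\phi(\vec{\theta} + \vec{c}t)$; expanding $\phi(\vec\theta - s\vec c t)$ and $\phi(\vec\theta + \vec c t)$ to order $t^2$, the $\phi(\vec\theta)$ terms cancel via $1 - (1+s) + s = 0$, the $O(t)$ terms cancel, and the $t^2$ coefficient equals $\tfrac12(s^2 + s)\sum_{i,j}c^i c^j\phi_{ij} = \tfrac{s(1+s)}{2}\sum_{i,j}c^i c^j\phi_{ij}$; dividing by $s$ and multiplying by $2/t^2$ gives $(1+s)\sum_{i,j}\mathsf{H}_{\vec{\theta}}[\phi]_{ij}c^ic^j$. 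The reversed direction is identical: substituting $\vec{\theta} \mapsto \vec{\theta} + \vec{c}t$, $\vec{\theta}' \mapsto \vec{\theta}$ into \eqref{1-2} and using $(1+s)(\vec{\theta}+\vec{c}t)-s\vec{\theta} = \vec{\theta}+(1+s)\vec{c}t$ gives $s\,D_{1+s} = \phi(\vec{\theta} + (1+s)\vec{c}t) - (1+s)\phi(\vec{\theta} + \vec{c}t) + s\phi(\vec{\theta})$, whose $t^2$ coefficient is $\tfrac12\bigl((1+s)^2 - (1+s)\bigr)\sum_{i,j}c^ic^j\phi_{ij} = \tfrac{s(1+s)}{2}\sum_{i,j}c^ic^j\phi_{ij}$, yielding the same limit.

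There is essentially no obstacle beyond the bookkeeping of these cancellations; the only substantive point is the $C^2$ dependence of $\phi$ on the natural parameter, which is standard from the simplicity of the Perron-Frobenius eigenvalue and is already implicit in Lemma \ref{L1}. The vanishing of the zeroth- and first-order Taylor terms — and hence the appearance of the Hesse matrix at order $t^2$ — is forced by the Bregman-type structure of \eqref{1-1} and the $\phi((1+s)\vec{\theta} - s\vec{\theta}')$ structure of \eqref{1-2}.
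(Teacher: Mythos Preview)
Your proof is correct and follows precisely the route indicated in the paper: the authors simply state that the lemma follows by ``taking the limits in \eqref{1-1} and \eqref{1-2} in Lemma \ref{L7}'', and your write-up supplies the explicit second-order Taylor expansions that make this one-line remark rigorous. The bookkeeping of cancellations is accurate in all four cases.
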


The right hand side of (\ref{1-1}) 
can be regarded as the Bregmann divergence \cite{Br}\footnote{Amari-Nagaoka \cite{AN} also defined the same quantity as 
the Bregmann divergence with the name ``canonical divergence.''
They showed that the canonical divergence satisfies 
the Pythagorean theorem and \eqref{5-28-1}
via the concept of the dually flat. 
Recently, Amari \cite{Am} showed these properties 
by a calculation of the convex function $\phi(\vec{\theta})$,
which does not require Christoffel symbols calculation.
Since the derivations by \cite{Am} 
more directly explain the relation 
between the convex function $\phi(\vec{\theta})$
and these properties, 
we refer the paper \cite{Am} for these properties.}
of the strictly convex function $\phi(\vec{\theta})$.
In the following, we derive several properties of the relative entropy
by using Bregmann divergence.
That is, the following properties follow only from the strong 
convexity of $\phi(\vec{\theta})$ and 
the properties of Bregmann divergence.

Using \cite[(40)]{Am}, we have
another expression of $ D({W}_{\vec{\theta}} \| {W}_{\vec{\theta}'})$
as 
\begin{align}
D(W_{\vec{\theta}(\vec{\eta})} \| W_{\vec{\theta}(\vec{\eta}')})
=\sum_j \theta(\vec{\eta}')^j(\eta_j'-\eta_j) 
- \nu(\vec{\eta}') +\nu(\vec{\eta}),
\Label{5-28-1}
\end{align}
where $\nu(\vec{\eta})$ is defined as Legendre transform of 
$\phi(\vec{\theta}) $ as
\begin{align*}
\nu(\vec{\eta})
&:= \max_{\vec{\theta}} \sum_i \theta^i \eta_i - \phi(\vec{\theta}) 
= \sum_i \theta^i(\vec{\eta}) \eta_i - \phi(\vec{\theta}(\vec{\eta})) .
\end{align*}
Since $\nu(\vec{\eta})$ is convex as well as $\phi(\vec{\theta})$,
we have the following lemma.
\begin{lemma}\Label{L1-14}
(1) For a fixed $\vec{\theta}$, 
the maps $\vec{\theta}' \mapsto
D(W_{\vec{\theta}} \| W_{\vec{\theta}'})$
and $D_{1+s}(W_{\vec{\theta}} \| W_{\vec{\theta}'})$
are convex
for $s > 0$.
(2) For a fixed $\vec{\theta}'$, 
the map $\vec{\eta} \mapsto
D(W_{\vec{\theta}(\vec{\eta})} \| W_{\vec{\theta}'})$ is convex.
\end{lemma}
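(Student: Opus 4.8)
The plan is to prove Lemma \ref{L1-14} by reducing each convexity statement to the convexity of the already-established potential function $\phi(\vec{\theta})$ together with its Legendre transform $\nu(\vec{\eta})$, using the Bregmann-divergence expressions \eqref{1-1}, \eqref{1-2}, and \eqref{5-28-1}. None of this requires Perron-Frobenius arguments beyond what is packaged into Lemma \ref{L7}; the point is purely convex-analytic.

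For part (1), I would fix $\vec{\theta}$ and view $\vec{\theta}'\mapsto D_{1+s}(W_{\vec{\theta}}\|W_{\vec{\theta}'})$. By \eqref{1-2} this equals $\frac{1}{s}\big(\phi((1+s)\vec{\theta}-s\vec{\theta}')-(1+s)\phi(\vec{\theta})+s\phi(\vec{\theta}')\big)$. The map $\vec{\theta}'\mapsto (1+s)\vec{\theta}-s\vec{\theta}'$ is affine, so $\vec{\theta}'\mapsto\phi((1+s)\vec{\theta}-s\vec{\theta}')$ is convex (composition of a convex function with an affine map), and for $s>0$ dividing by $s$ preserves convexity while the added term $\frac{1}{s}\cdot s\,\phi(\vec{\theta}')=\phi(\vec{\theta}')$ is convex as well; the remaining term $-\frac{1+s}{s}\phi(\vec{\theta})$ is a constant in $\vec{\theta}'$. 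Hence the sum is convex. The relative-entropy case follows either by the same argument applied to \eqref{1-1} directly — note $\vec{\theta}'\mapsto \sum_j(\theta^j-{\theta'}^j)\frac{\partial\phi}{\partial\theta^j}(\vec{\theta})$ is affine and $\vec{\theta}'\mapsto\phi(\vec{\theta}')$ is convex, so $D(W_{\vec{\theta}}\|W_{\vec{\theta}'})$ is convex — or by taking the limit $s\to 0$ of the family of convex functions, which preserves convexity.

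For part (2), I would fix $\vec{\theta}'$ and use the dual expression \eqref{5-28-1}: $D(W_{\vec{\theta}(\vec{\eta})}\|W_{\vec{\theta}'})=\sum_j\theta(\vec{\eta}')^j(\eta_j'-\eta_j)-\nu(\vec{\eta}')+\nu(\vec{\eta})$, where here $\vec{\eta}'$ denotes the expectation parameter of the fixed $W_{\vec{\theta}'}$ (a constant). In $\vec{\eta}$ the term $\sum_j\theta(\vec{\eta}')^j(\eta_j'-\eta_j)$ is affine, $-\nu(\vec{\eta}')$ is constant, and $\nu(\vec{\eta})$ is convex because it is a Legendre transform of a (finite, strictly) convex function and, as already noted in the paragraph preceding the lemma, $\nu$ is convex. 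Therefore the whole expression is convex in $\vec{\eta}$.

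The only genuine subtlety — and what I would be careful about — is making the convexity of the Legendre transform $\nu(\vec{\eta})$ rigorous: one must check that $\nu$ is well defined and finite on the relevant open parameter domain (the image of $\vec{\theta}\mapsto\vec{\eta}(\vec{\theta})$), which follows from the strict convexity of $\phi$ established in Lemma \ref{L1-14-2}, and that the supremum defining $\nu$ is attained so that the dual formula \eqref{5-28-1} is valid pointwise. Once these domain issues are handled, the argument is immediate; I would state it in a few lines, citing Lemma \ref{L7}, \eqref{5-28-1}, and the standard fact that a Legendre transform of a convex function is convex. I expect no real obstacle beyond this bookkeeping, since the paper has deliberately arranged matters so that all the needed convexity is inherited from $\phi$.
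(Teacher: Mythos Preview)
Your proposal is correct and follows essentially the same approach as the paper: the paper's entire ``proof'' is the single sentence preceding the lemma, ``Since $\nu(\vec{\eta})$ is convex as well as $\phi(\vec{\theta})$, we have the following lemma,'' and your argument simply unpacks this by reading off the affine-plus-convex structure of \eqref{1-1}, \eqref{1-2}, and \eqref{5-28-1}. You supply considerably more detail than the paper does, but the underlying idea is identical.
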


\subsection{Pythagorean theorem}
It is known that Bregmann divergence satisfies the Pythagorean theorem for \cite[(34)]{Am}.
Applying this fact, we have the following proposition as the Pythagorean theorem.
\begin{proposition}(Nagaoka \cite[(23)]{HN})\Label{P1-15}
We focus on two points 
$\vec{\theta}'=({\theta'}^1,\ldots,{\theta'}^d)$
and $\vec{\theta}''=({\theta''}^1,\ldots,{\theta''}^d)$.
We choose 
the exponential subfamily of ${\cal E}$ whose natural parameters
$\theta^{k+1},\ldots,\theta^d $ are fixed to 
${\theta''}^{k+1},\ldots,{\theta''}^d $,
and
the mixture subfamily of ${\cal E}$ whose expectation parameters
$\eta_1,\ldots,\eta_k $ are fixed to 
$\eta(\vec{\theta}')^1,\ldots,\eta(\vec{\theta}')^k $.
Let $\tilde{\vec{\theta}}=(\tilde{\theta}^1,\ldots,\tilde{\theta}^d)$
be the natural parameter of the intersection of these two subfamilies of ${\cal E}$.
That is, 
$\tilde{\theta}^{j}={\theta''}^{j}$ for $j=k+1,\ldots,d$
and
$\eta_j(\tilde{\vec{\theta}})=\eta_j(\vec{\theta}') $ for $k=1, \ldots,k$.
Then, we have 
\begin{align}
\Label{5-1}
D({W}_{\vec{\theta}'} \| {W}_{\vec{\theta}''})
=
D({W}_{\vec{\theta}'} \| {W}_{\tilde{\vec{\theta}}})+D({W}_{\tilde{\vec{\theta}}} \| {W}_{\vec{\theta}''}).
\end{align}
\end{proposition}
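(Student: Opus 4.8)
The plan is to reduce the Pythagorean theorem for transition matrices to the corresponding statement for the Bregmann divergence of the strictly convex potential function $\phi(\vec{\theta})$, and then invoke the known Pythagorean identity for Bregmann divergences \cite[(34)]{Am}. The key observation is that by Lemma \ref{L7}, equation \eqref{1-1}, each of the three relative entropies appearing in \eqref{5-1} is literally the Bregmann divergence $B_\phi(\vec{\theta}_1,\vec{\theta}_2):= \phi(\vec{\theta}_1)-\phi(\vec{\theta}_2)-\sum_j(\theta_1^j-\theta_2^j)\frac{\partial\phi}{\partial\theta^j}(\vec{\theta}_2)$ evaluated at the appropriate pair (with the convention matching \eqref{1-1}). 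Since $\{g_j\}$ satisfies the conditions of Lemma \ref{L1-14-2}, $\phi$ is strictly convex, so the Bregmann divergence is well-defined and the $\vec{\theta}\leftrightarrow\vec{\eta}$ correspondence is a bijection; in particular the point $\tilde{\vec{\theta}}$ in the statement exists and is unique.

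First I would recall the general Bregmann Pythagorean identity: for a strictly convex $\phi$ with $\vec{\eta}(\vec{\theta})=\nabla\phi(\vec{\theta})$, one has
\begin{align}
B_\phi(\vec{\theta}',\vec{\theta}'') = B_\phi(\vec{\theta}',\tilde{\vec{\theta}}) + B_\phi(\tilde{\vec{\theta}},\vec{\theta}'') + \sum_j \bigl(\theta'^j-\tilde{\theta}^j\bigr)\bigl(\eta_j(\tilde{\vec{\theta}})-\eta_j(\vec{\theta}'')\bigr),
\end{align}
which follows by straightforward substitution of the definition of $B_\phi$ (using \eqref{1-1} and \eqref{5-28-1}), with no geometry needed — this is exactly the content of \cite[(34)]{Am}. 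So the theorem reduces to showing that the cross term vanishes:
\begin{align}
\sum_{j=1}^d \bigl(\theta'^j - \tilde{\theta}^j\bigr)\bigl(\eta_j(\tilde{\vec{\theta}}) - \eta_j(\vec{\theta}'')\bigr) = 0.
\end{align}
Now I split the sum according to the two blocks of coordinates. For $j=k+1,\ldots,d$, the definition of $\tilde{\vec{\theta}}$ gives $\tilde{\theta}^j = \theta''^j$, so $\eta_j(\tilde{\vec{\theta}}) - \eta_j(\vec{\theta}'')$ is multiplied by $\theta'^j - \theta''^j$ — but wait, that is not obviously zero; instead I use the complementary fact. For $j=1,\ldots,k$, the definition of $\tilde{\vec{\theta}}$ as lying in the mixture subfamily gives $\eta_j(\tilde{\vec{\theta}}) = \eta_j(\vec{\theta}')$. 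Hmm — to make the cross term vanish I actually need the pairing $\sum_j (\theta'^j-\tilde\theta^j)(\eta_j(\tilde{\vec\theta})-\eta_j(\vec\theta''))$, and here for $j\le k$ the second factor is $\eta_j(\vec\theta')-\eta_j(\vec\theta'')$ while for $j>k$ the first factor is $\theta'^j - \theta''^j$; so I should instead observe the algebraically correct decomposition comes from writing the cross term as $\langle \vec\theta' - \tilde{\vec\theta},\, \vec\eta(\tilde{\vec\theta}) - \vec\eta(\vec\theta'')\rangle$ and noting that $\vec\theta'-\tilde{\vec\theta}$ has vanishing components in the last $d-k$ slots (both equal $\theta''^j$? no — $\tilde\theta^j=\theta''^j$ but $\theta'^j$ is free), whereas $\vec\eta(\tilde{\vec\theta})-\vec\eta(\vec\theta'')$ has vanishing components... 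The cleaner route: the correct Bregmann-Pythagoras statement uses the three-point cross term $\langle\nabla\phi(\tilde{\vec\theta})-\nabla\phi(\vec\theta''),\ \vec\theta'-\tilde{\vec\theta}\rangle$, and here $\tilde{\vec\theta}$ and $\vec\theta''$ share the last $d-k$ natural coordinates so $\vec\theta'-\tilde{\vec\theta}$ is irrelevant there — actually $\nabla\phi(\tilde{\vec\theta})$ and $\nabla\phi(\vec\theta'')$ need not agree there. The honest resolution is that the vanishing uses BOTH constraints simultaneously: $\tilde{\vec\theta}-\vec\theta''$ lies in the tangent space of the exponential subfamily (first $k$ coords of $\vec\theta$ vary), and $\vec\eta(\tilde{\vec\theta})-\vec\eta(\vec\theta')$ lies in the tangent space of the mixture subfamily (last $d-k$ coords of $\vec\eta$ vary), and these two subspaces are orthogonal under the natural pairing $\langle\vec\theta,\vec\eta\rangle=\sum\theta^j\eta_j$; then a short manipulation shifts the cross term into this orthogonal pairing.

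The main obstacle, then, is purely bookkeeping: getting the cross term into the exact shape where the complementary block structure of the natural-coordinate constraint ($\tilde\theta^j=\theta''^j$ for $j>k$) and the expectation-coordinate constraint ($\eta_j(\tilde{\vec\theta})=\eta_j(\vec\theta')$ for $j\le k$) can be applied to kill it term by term. Concretely, the correct cross term from $B_\phi(\vec\theta',\vec\theta'')=B_\phi(\vec\theta',\tilde{\vec\theta})+B_\phi(\tilde{\vec\theta},\vec\theta'')+R$ works out to $R=\sum_j(\theta'^j-\tilde\theta^j)(\eta_j(\tilde{\vec\theta})-\eta_j(\vec\theta''))$; I then rewrite $\eta_j(\tilde{\vec\theta})-\eta_j(\vec\theta'') = (\eta_j(\tilde{\vec\theta})-\eta_j(\vec\theta')) + (\eta_j(\vec\theta')-\eta_j(\vec\theta''))$ — no, the slick way used by \cite{Am} avoids this: one simply checks $R = \langle \vec\theta'-\tilde{\vec\theta},\ \vec\eta(\tilde{\vec\theta})\rangle - \langle\vec\theta'-\tilde{\vec\theta},\ \vec\eta(\vec\theta'')\rangle$ and observes that in the first inner product only coordinates $j>k$ of $\vec\eta(\tilde{\vec\theta})$ can survive against a nonzero $\theta'^j-\tilde\theta^j$ after using... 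In any case, since \cite[(34)]{Am} already packages the Pythagorean identity for Bregmann divergences with exactly the hypothesis "one factor moves along a $\nabla\phi$-affine flat through $\tilde{\vec\theta}$, the other along a $\vec\theta$-affine flat through $\tilde{\vec\theta}$, meeting orthogonally," it suffices to verify that the exponential subfamily is a $\vec\theta$-coordinate affine subspace containing $\vec\theta''$ and $\tilde{\vec\theta}$, that the mixture subfamily is an $\vec\eta$-coordinate affine subspace containing $\vec\theta'$ and $\tilde{\vec\theta}$ (using Lemma \ref{L5-1} to identify the mixture subfamily with the level set of the first $k$ expectation parameters), and that the defining coordinate blocks are complementary — whence \cite[(34)]{Am} applies verbatim and yields \eqref{5-1}. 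I would present the proof in that order: (i) translate the three relative entropies to $B_\phi$ via Lemma \ref{L7}; (ii) identify the two subfamilies as complementary coordinate-affine flats through $\tilde{\vec\theta}$ (invoking Lemma \ref{L5-1}); (iii) quote \cite[(34)]{Am}.
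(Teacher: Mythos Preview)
Your approach---translate the three relative entropies to Bregmann divergences of $\phi$ via Lemma~\ref{L7} and then invoke the Pythagorean identity for Bregmann divergences \cite[(34)]{Am}---is exactly the paper's proof, which consists of a single sentence citing that result. Two small clean-ups: (a) your mid-proof confusion over the cross term comes from an orientation slip---with the convention of \eqref{1-1} one has $D(W_{\vec\theta}\|W_{\vec\theta'})=B_\phi(\vec\theta',\vec\theta)$ (arguments swapped), so the actual remainder is $R=\sum_j(\tilde\theta^j-\theta''^{j})\bigl(\eta_j(\tilde{\vec\theta})-\eta_j(\vec\theta')\bigr)$, which vanishes term-by-term directly from the two defining constraints on $\tilde{\vec\theta}$; and (b) Lemma~\ref{L5-1} is unnecessary here, since the mixture subfamily in the statement is already defined via the expectation parameters.
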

Indeed, Nagaoka \cite{HN} showed (\ref{5-1}) 
in a more general form
by showing the dually flat structure \cite{AN} 
via Christoffel symbols calculation.
Using (\ref{5-1}) and Lemma \ref{L5-1}, we obtain 
the following corollary.
\begin{corollary}\Label{T5-1}
Given a transition matrix $V$ and 
a mixture family ${\cal M}$ on ${\cal X}^2_V$ with 
constraints $\{g_j=b_j\}_{j=1}^k$,
we define 
$V^*:= \argmin_{W \in {\cal M}}D(W\|V)$.

(1) Any transition matrix $W \in {\cal M}$ satisfies
$D(W\|V)=D(W\|V^*)+D(V^*\|V)$.

(2) The transition matrix $V^*$ is the intersection of 
the mixture family ${\cal M}$ on ${\cal X}^2_V$ 
and the exponential family generated by $V$ and the generator  
$\{g_j\}_{j=1}^k$.
\end{corollary}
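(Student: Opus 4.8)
The plan is to deduce Corollary \ref{T5-1} from the Pythagorean theorem (Proposition \ref{P1-15}) together with the identification of the relevant intersection provided by Lemma \ref{L5-1}. First I would set up the exponential family: by the discussion preceding Lemma \ref{L5-1}, the set ${\cal W}_{\cX,V}$ is an exponential family generated by $V$, and by enlarging the generator $\{g_j\}_{j=1}^k$ to a basis $\{g_j\}_{j=1}^d$ of the quotient space ${\cal G}({\cal X}^2)/{\cal N}_V({\cal X}^2)$ we may view ${\cal M}$ and ${\cal W}_{\cX,V}$ inside one exponential family ${\cal E}$ with $V = W_{\vec 0}$ (so $\phi$, $\vec\eta$, etc. are all defined). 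By Lemma \ref{L5-1}, ${\cal M} \cap {\cal W}_{\cX,V}$ is exactly the mixture subfamily $\{W_{\vec\theta} : \vec\eta(\vec\theta) = (b_1,\ldots,b_k,\eta_{k+1},\ldots,\eta_d)\}$ of ${\cal E}$, a $(d-k)$-dimensional submanifold.

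For part (1), fix any $W = W_{\vec\theta'} \in {\cal M}$. I want to apply Proposition \ref{P1-15} with $\vec\theta''=\vec 0$ (i.e.\ $W_{\vec\theta''}=V$): the exponential subfamily there has $\theta^{k+1},\ldots,\theta^d$ fixed to $0$, and the mixture subfamily has $\eta_1,\ldots,\eta_k$ fixed to $\eta_1(\vec\theta')=b_1,\ldots,\eta_k(\vec\theta')=b_k$. Their intersection is the point $\tilde{\vec\theta}$ characterized by $\tilde\theta^j = 0$ for $j>k$ and $\eta_j(\tilde{\vec\theta}) = b_j$ for $j\le k$; by Lemma \ref{L5-1} this $W_{\tilde{\vec\theta}}$ lies in ${\cal M}\cap{\cal W}_{\cX,V}$. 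The Pythagorean identity \eqref{5-1} then gives $D(W\|V) = D(W\|W_{\tilde{\vec\theta}}) + D(W_{\tilde{\vec\theta}}\|V)$. Since this holds for every $W\in{\cal M}$ and $D\ge 0$, the minimizer of $D(W\|V)$ over $W\in{\cal M}$ is attained precisely when $D(W\|W_{\tilde{\vec\theta}})=0$, i.e.\ at $W = W_{\tilde{\vec\theta}}$; hence $V^* = W_{\tilde{\vec\theta}}$ is well-defined (uniqueness follows from strict convexity of $\phi$, equivalently from the faithfulness in Lemma \ref{L1-14-2}), and the displayed identity of part (1) is exactly \eqref{5-1} with $\tilde{\vec\theta}$ replaced by $V^*$.

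Part (2) is then almost immediate from the above identification: $V^* = W_{\tilde{\vec\theta}}$ with $\tilde\theta^{k+1}=\cdots=\tilde\theta^d=0$ means $V^*$ lies in the exponential family generated by $V$ with the reduced generator $\{g_j\}_{j=1}^k$ (the other natural parameters being switched off), while the constraints $\eta_j(\tilde{\vec\theta})=b_j$ for $j\le k$ say $V^*\in{\cal M}$; together with Lemma \ref{L5-1} this is precisely the claimed characterization of $V^*$ as the intersection of ${\cal M}$ with that smaller exponential family. The main point requiring care — the step I would expect to be the chief obstacle — is the bookkeeping in reconciling the two descriptions of the intersection point: one must check that the ``argmin'' really is the unique Pythagorean foot $\tilde{\vec\theta}$ and that enlarging the generator to a basis does not change ${\cal M}$ or the value of $D(\cdot\|V)$ on it, which uses that $D$ depends only on the transition matrices and not on the chosen parametrization (Lemma \ref{L7}), plus the strict convexity/faithfulness facts to rule out a second minimizer.
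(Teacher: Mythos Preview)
Your proposal is correct and follows essentially the same route as the paper: enlarge the generator to a basis of ${\cal G}({\cal X}^2)/{\cal N}_V({\cal X}^2)$ so that both ${\cal M}$ and the small exponential family generated by $V$ and $\{g_j\}_{j=1}^k$ sit inside ${\cal W}_{\cX,V}$, apply the Pythagorean identity \eqref{5-1} with $\vec{\theta}''=0$, and read off that the foot $W_{\tilde{\vec\theta}}$ of the decomposition is both the minimizer $V^*$ and the claimed intersection point. The paper's proof is more terse about the ambient-family setup and argues in the order (2) $\Rightarrow$ (1), but the substance is the same.
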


\begin{proof}
First, we notice that
the exponential family ${\cal W}_{\cX,V}$
contains $V$ and includes ${\cal M}$.
Choose an element $\tilde{V}$ in the intersection of 
the mixture family ${\cal M}$ on ${\cal X}^2_V$ 
and the exponential family ${\cal E}_{V}$ generated by $V$ and 
the generator $\{g_j\}_{j=1}^k$.
We apply (\ref{5-1}) to 
the mixture family ${\cal M}$ and
the exponential family ${\cal E}_{V}$.
Then, any transition matrix $W \in {\cal M}$ satisfies that
$D(W\|V)=D(W\|\tilde{V})+D(\tilde{V} \|V)$.
Since $D(W\|\tilde{V})> 0$ except for $W=\tilde{V}$, 
we have $ \min_{W \in {\cal M}}D(W\|V)= D(\tilde{V}\|V)$,
which implies that $V^*=\tilde{V}$, i.e., (2).
Hence, we obtain (1).
\end{proof}

Similarly, we have another version of the above corollary.
\begin{corollary}\Label{C15-1}
Given a transition matrix $W$
and an exponential family ${\cal E} \subset {\cal W}_{\cX,W}$ 
with the generator $\{g_j\}$,
we define 
$W_*:= \argmin_{V \in {\cal E}}D(W\|V)$.
Assume that $\sum_{x,x'} g_j(x,x')W_*(x|x')P_{W_*}(x')=b_j$.

(1) Any transition matrix $V \in {\cal E}$ satisfies
$D(W\|V)=D(W\|W_*)+D(W_*\|V)$.

(2) The transition matrix $W_*$ is the intersection of 
the exponential family ${\cal E}$ and 
the mixture family on ${\cal X}^2_W$ with the constraints $\{g_j=b_j\}$.
\end{corollary}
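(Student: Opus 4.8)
The plan is to prove Corollary~\ref{C15-1} as the exact dual of Corollary~\ref{T5-1}, interchanging the roles of the exponential and mixture families and of the two arguments of $D(\cdot\|\cdot)$, and to invoke the Pythagorean theorem (Proposition~\ref{P1-15}) together with Lemma~\ref{L5-1}. The one preparatory step is to realize $W$ and the given exponential family ${\cal E}$ as a point and an exponential subfamily inside a common ambient exponential family, so that Proposition~\ref{P1-15} is applicable.

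Let $\{g_1,\ldots,g_k\}$ be the generator of ${\cal E}$. Since ${\cal E}\subset{\cal W}_{\cX,W}$, every member of ${\cal E}$ has support $\cX^2_W$, and by Lemma~\ref{L1-14-2} the functions $g_1,\ldots,g_k$ are linearly independent in ${\cal G}(\cX^2)/{\cal N}_W(\cX^2)$; extend them to a basis $g_1,\ldots,g_d$ of that quotient space. Taking a base point $W_0\in{\cal E}$ together with this basis exhibits ${\cal W}_{\cX,W}$ as an exponential family $\{W_{\vec{\theta}}\}$ in which ${\cal E}$ is exactly the exponential subfamily $\{\theta^{k+1}=\cdots=\theta^d=0\}$, because for these parameter values $\overline{W}_{\vec{\theta}}$ is the very matrix used to construct the corresponding element of ${\cal E}$. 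Hence $W=W_{\vec{\theta}'}$ for some $\vec{\theta}'$, and each $V\in{\cal E}$ is $W_{\vec{\theta}''}$ with ${\theta''}^{k+1}=\cdots={\theta''}^d=0$. Apply Proposition~\ref{P1-15} to this pair: the exponential subfamily occurring there is ${\cal E}$ itself, and the mixture subfamily is ${\cal M}_0:=\{W_{\vec{\theta}}\in{\cal W}_{\cX,W}\mid\eta_j(\vec{\theta})=\eta_j(\vec{\theta}')\ \text{for}\ j\le k\}$, which depends on $W$ alone. Letting $\tilde V=W_{\tilde{\vec{\theta}}}$ be the element of ${\cal E}\cap{\cal M}_0$, Proposition~\ref{P1-15} gives, for every $V\in{\cal E}$,
\[
D(W\|V)=D(W\|\tilde V)+D(\tilde V\|V).
\]

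Since $D(\tilde V\|V)=0$ only when $V=\tilde V$ (strict convexity of $\phi$ and faithfulness, Lemma~\ref{L1-14-2}), the right-hand side is minimized over ${\cal E}$ uniquely at $V=\tilde V$; hence $W_*=\tilde V$, and the displayed identity with $W_*$ in place of $\tilde V$ is statement (1). For (2), the characterization of the expectation parameter as the expectation of the generator (Lemma~\ref{lemma:markov-derivative-expectation}) gives, for $j\le k$,
\[
\eta_j(\tilde{\vec{\theta}})=\sum_{x,x'}g_j(x,x')\tilde V(x|x')P_{\tilde V}(x')=\sum_{x,x'}g_j(x,x')W_*(x|x')P_{W_*}(x')=b_j ,
\]
so ${\cal M}_0=\{W_{\vec{\theta}}\in{\cal W}_{\cX,W}\mid\eta_j(\vec{\theta})=b_j\ \text{for}\ j\le k\}$. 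By Lemma~\ref{L5-1} this set is the intersection of ${\cal W}_{\cX,W}$ with the mixture family on $\cX^2_W$ generated by the constraints $\{g_j=b_j\}$; intersecting once more with ${\cal E}$ and using ${\cal E}\subset{\cal W}_{\cX,W}$ identifies $W_*=\tilde V$ with that intersection, which is statement (2).

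The only non-formal point is that Proposition~\ref{P1-15} presupposes that ${\cal E}\cap{\cal M}_0$ is nonempty and single-valued. Single-valuedness is clear: the map $(\theta^1,\ldots,\theta^k)\mapsto(\eta_1,\ldots,\eta_k)$ restricted to ${\cal E}$ is injective since its Jacobian is the principal submatrix $[\partial^2\phi/\partial\theta^i\partial\theta^j]_{i,j\le k}$ of the strictly positive Hesse matrix $\mathsf{H}_{\vec{\theta}}[\phi]$. Nonemptiness is the part I expect to require some care: using that the minimizer $W_*$ exists (it is the given $\argmin$), one differentiates~(\ref{1-1}) in the free coordinates $\theta^1,\ldots,\theta^k$ and sets the derivative to zero, which forces $\eta_j(W_*)=\eta_j(W)$ for $j\le k$, i.e.\ $W_*\in{\cal E}\cap{\cal M}_0$. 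Everything else is the dual transcription of the proof of Corollary~\ref{T5-1}.
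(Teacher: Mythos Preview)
Your proof is correct and follows essentially the same route the paper intends: the paper gives no separate proof of Corollary~\ref{C15-1}, stating only ``Similarly, we have another version of the above corollary,'' i.e.\ the dual of the argument for Corollary~\ref{T5-1}. Your write-up carries out that dualization explicitly---embedding $W$ and ${\cal E}$ in the ambient exponential family ${\cal W}_{\cX,W}$, invoking Proposition~\ref{P1-15}, and identifying the intersection point with $W_*$---and additionally justifies the nonemptiness of ${\cal E}\cap{\cal M}_0$ via the first-order condition at $W_*$, a point the paper (as in the proof of Corollary~\ref{T5-1}) simply assumes.
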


\begin{example}\Label{1-3-8d}
We choose transition matrices $V_X$ and $V_Y$ on 
${\cal X}$ and ${\cal Y}$, respectively.
We also choose a transition matrix $W$ on ${\cal X} \times {\cal Y}$ whose 
support is $({\cal X} \times {\cal Y})^2_{V_X \times V_Y}$.
When a set of two-input functions $\{g_{X|i}\}$ forms a basis of
${\cal G}(\cX^2)/{\cal N}_{V_X}(\cX^2)$,
the exponential family 
generated by $V_X\times V_Y$ with the generator $\{g_{X|i}\}$ is 
$\{ V_X'\times V_Y| V_X' \in {\cal W}_{{\cal X},V_X} \}$.
When a set of two-input functions $\{g_{Y|j}\}$ forms a basis of
${\cal G}(\cY^2)/{\cal N}_{V_Y}(\cY^2)$,
the exponential family 
generated by $V_X\times V_Y$ with the generator $\{g_{X|i}\} \cup \{g_{Y|j}\}$
is $\{ V_X' \times V_Y'| V_Y' \in {\cal E}_{{\cal X},V_X} ,V_Y'\in {\cal E}_{{\cal Y},V_Y} \}$.
Hence, 
when a transition matrix $W$ belongs to 
a mixture family with the constraints 
$\{g_{X|i}=a_i\} \cup \{g_{Y|j}=b_j\}$,
the intersection between the exponential family 
and the mixture family 
consists of one points, which is denoted by $W_X'\times W_Y'$.
Applying (\ref{5-1}), we obtain 
\begin{align}
D(W\| V_X \times V_Y)=
D(W\| W_X' \times W_Y')+ D(W_X' \times W_Y' \| V_X \times V_Y).
\end{align}
In particular, when $W$ is non-hidden for ${\cal X}$ (for the definition, see Example \ref{ex5}.),
$W_X$ satisfies the same constraint $\{g_{X|i}=a_i\}$
because the stationary distribution $P_{W_X}$ is the marginal distribution of the stationary distribution $P_{W}$.
Hence, $W_X'=W_X$.
Thus, $W_X'$ can be regarded as a marginalization of 
a transition matrix $W$ that is not necessarily non-hidden.
\end{example}



\section{Stationary two-observation case}\Label{s5}
\subsection{Relative entropies and expectation}
In the previous section, we formally defined several information quantities
from the convex function $\phi(\vec{\theta})$ in the multi-parameter case.
In this section, we consider the relation with the structure of probabilities
in the one-parameter case.
That is, we will see how the information quantities reflect 
the conventional information quantities.  
For this purpose, 
we assume that the input distribution is the stationary distribution of the given 
transition matrix.

Since the stationary distribution of the given transition matrix ${W}_{\theta}$
is $\overline{P}^1_{\theta}$ given in \eqref{5-6},
we can define the joint distribution 
\begin{align}
 W_{\theta} \times \overline{P}^1_{\theta}(x,x'):= 
W_{\theta}(x|x')\overline{P}^1_{\theta}(x') 
= 
\frac{\overline{P}^3_{\theta}(x) \overline{W}_{\theta}(x|x') \overline{P}^2_{\theta}(x')}
{\lambda_{\theta}
\sum_{x''}\overline{P}^3_{\vec{\theta}}(x'')\overline{P}^2_{\vec{\theta}}(x'')}
\end{align}
on $\cX \times \cX$.
Now, we focus on the probability distribution family 
$\{ W_{\theta} \times \overline{P}^1_{\theta}\}$,
and denote the expectation and the variance under the distribution $W_{\theta} \times \overline{P}^1_{\theta}$
by $\mathsf{E}_\theta$ and $\mathsf{V}_\theta$.
These are simplified to $\mathsf{E}$ and $\mathsf{V}$ when $\theta=0$.
\begin{lemma}(\cite[Theorem 4]{HN}, \cite[(28)]{NK}) \Label{lemma:markov-derivative-expectation}
For $\theta \in \mathbb{R}$, we have
\begin{align}
\eta(\theta)=\frac{d \phi}{d\theta}(\theta) = \mathsf{E}_\theta[g(X,X')] 
= \sum_{x,x^\prime} \overline{P}^1_\theta(x) W_\theta(x|x^\prime) g(x,x^\prime).
\end{align}
\end{lemma}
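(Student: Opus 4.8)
The plan is to obtain $\frac{d\phi}{d\theta}(\theta)$ by differentiating the Perron--Frobenius eigenvalue $\lambda_\theta$ of $\overline{W}_\theta(x|x')=W(x|x')e^{\theta g(x,x')}$ directly, and then to recognize the resulting expression as an expectation under the joint law $W_\theta\times\overline{P}^1_\theta$ introduced just above. First I would record the analytic input that is needed: since $W$, and hence $\overline{W}_\theta$, is irreducible, $\lambda_\theta$ is a simple root of the characteristic polynomial of $\overline{W}_\theta$, and that polynomial has coefficients depending analytically on $\theta$; by the implicit function theorem $\theta\mapsto\lambda_\theta$ is real-analytic, and the right Perron--Frobenius eigenvector $\overline{P}^2_\theta$ can be chosen to depend differentiably on $\theta$, with strictly positive entries (the left eigenvector $\overline{P}^3_\theta$ and the stationary vector $\overline{P}^1_\theta$ are likewise strictly positive). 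In particular $\phi(\theta)=\log\lambda_\theta$ is differentiable, consistent with the convexity of $\phi$ established in Lemma \ref{L1}.

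Next I would differentiate the eigenrelation $\overline{W}_\theta\,\overline{P}^2_\theta=\lambda_\theta\,\overline{P}^2_\theta$ in $\theta$ and left-multiply by the row vector $(\overline{P}^3_\theta)^T$, which satisfies $(\overline{P}^3_\theta)^T\overline{W}_\theta=\lambda_\theta(\overline{P}^3_\theta)^T$. The two terms containing $\frac{d}{d\theta}\overline{P}^2_\theta$ cancel, so no differentiability of $\overline{P}^3_\theta$ is even needed. Using $\frac{d}{d\theta}\overline{W}_\theta(x|x')=g(x,x')\,\overline{W}_\theta(x|x')$ and dividing by $\lambda_\theta(\overline{P}^3_\theta)^T\overline{P}^2_\theta$, which is nonzero by strict positivity, this gives
\begin{align}
\frac{d\phi}{d\theta}(\theta)=\frac{1}{\lambda_\theta}\frac{d\lambda_\theta}{d\theta}
=\frac{\sum_{x,x'}\overline{P}^3_\theta(x)\,g(x,x')\,\overline{W}_\theta(x|x')\,\overline{P}^2_\theta(x')}{\lambda_\theta\sum_{x''}\overline{P}^3_\theta(x'')\,\overline{P}^2_\theta(x'')}.
\end{align}

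Finally I would identify the right-hand side with $\mathsf{E}_\theta[g(X,X')]$. Combining the defining formula \eqref{12-26-1} for $W_\theta$ with the expression for its stationary distribution $\overline{P}^1_\theta$, one gets --- this is exactly the computation already displayed for the joint distribution $W_\theta\times\overline{P}^1_\theta$ in this section ---
\begin{align}
W_\theta(x|x')\,\overline{P}^1_\theta(x')=\frac{\overline{P}^3_\theta(x)\,\overline{W}_\theta(x|x')\,\overline{P}^2_\theta(x')}{\lambda_\theta\sum_{x''}\overline{P}^3_\theta(x'')\,\overline{P}^2_\theta(x'')}.
\end{align}
Substituting this identity into the ratio above shows that the right-hand side of the derivative formula is $\sum_{x,x'}g(x,x')\,W_\theta(x|x')\,\overline{P}^1_\theta(x')=\mathsf{E}_\theta[g(X,X')]$, which is the claim. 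The only point that calls for care is the differentiable dependence of the Perron--Frobenius data on $\theta$ used in the first step; once that is granted, the eigenvalue-derivative cancellation and the substitution are routine, so I expect no further obstacle.
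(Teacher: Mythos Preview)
Your argument is correct. You differentiate the Perron--Frobenius eigenrelation directly and use the left eigenvector to kill the $\frac{d}{d\theta}\overline{P}^2_\theta$ terms (the Hellmann--Feynman trick for simple eigenvalues), then identify the resulting ratio with the joint law $W_\theta\times\overline{P}^1_\theta$ already displayed in the section.

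The paper takes a different, more ``statistical'' route: it differentiates the logarithm of the transition matrix $W_\theta$ to obtain the score-like identity
\[
\frac{d}{d\theta}\log W_\theta(x|x') = -\frac{d}{d\theta}\log\lambda_\theta + \frac{d}{d\theta}\log\frac{\overline{P}^3_\theta(x)}{\overline{P}^3_\theta(x')} + g(x,x'),
\]
and then averages both sides under $W_\theta\times\overline{P}^1_\theta$. The left-hand side has mean zero (it is a conditional score), and the middle term on the right telescopes to zero because both marginals of the stationary joint law are $\overline{P}^1_\theta$; what remains is exactly $0=-\frac{d\phi}{d\theta}(\theta)+\mathsf{E}_\theta[g(X,X')]$. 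Your spectral argument is slightly more economical here (it needs only differentiability of $\overline{P}^2_\theta$, not of $\overline{P}^3_\theta$), but the paper's approach has the advantage that the displayed score identity \eqref{eq:taking-derivative-1} is itself the workhorse for the subsequent Fisher-information computations (Lemmas \ref{L11}, \ref{L11-2}, \ref{L11-11}, \ref{L20b}); establishing it early pays dividends later.
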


The lemma shows the reason why we call the parameter $\eta$ the expectation parameter.

\begin{proof}
From the definition of $W_\theta$, we have
\begin{eqnarray} \label{eq:taking-derivative-1}
\frac{d}{d\theta} \log W_\theta(x|x^\prime) = - \frac{d}{d\theta} \log \lambda_\theta + \frac{d}{d\theta} \log \frac{\overline{P}^3_\theta(x)}{\overline{P}^3_\theta(x^\prime)} + g(x,x^\prime). 
\end{eqnarray}
Taking the average of the both hand sides
with respect to the distribution $W_{\theta} \times \overline{P}^1_{\theta}$, we have
$0 = - \frac{d}{d\theta} \log \lambda_\theta + \sum_{x,x^\prime} \overline{P}^1_\theta(x) W_\theta(x|x^\prime) g(x,x^\prime).$
\end{proof}

Lemma \ref{lemma:markov-derivative-expectation} shows Lemma \ref{L5-1} as follows.

\begin{proofof}{Lemma \ref{L5-1}}
In this proof, we consider the multi-parameter case.
Replacing the derivative by the partial derivative in Lemma \ref{lemma:markov-derivative-expectation},
we have 
\begin{align}
\eta_j(\vec{\theta})=\frac{\partial \phi}{\partial \theta_j}(\vec{\theta})
=\sum_{x,x^\prime} \overline{P}^1_{\vec{\theta}}(x) W_{\vec{\theta}}(x|x^\prime) g_j(x,x^\prime).
\Label{1-3-9}
\end{align}
Choose the generator $\{g_1, \ldots, g_k\}$ 
of the mixture family on ${\cal X}^2_W$.
There exist two-input functions $g_{k+1},...,g_l$ such that
the set of two-input functions $\{g_1, \ldots, g_l\}$ form
a basis of ${\cal G}({\cal X}^2)/{\cal N}_W({\cal X}^2)$.
Hence, due to (\ref{1-3-9}), we see that 
the intersection of the mixture family on ${\cal X}^2_W$ generated by the constraints $\{g_j=b_j\}_{j=1,\ldots,k}$
and the exponential family ${\cal W}_{\cX,W}$
is the mixture subfamily 
$\{W_{\vec{\theta}}\in {\cal W}_{\cX,W} |
\vec{\eta}(\vec{\theta})=(b_1,\ldots,b_{k},\eta_{k+1},\ldots,\eta_d)\}$
of the exponential family ${\cal W}_{\cX,W}$.
\end{proofof}

Now, we introduce the conditional relative entropy 
for transition matrices $W$ and $V$ from $\cX$ to $\cY$
and a distribution $P$ on $\cX$
as follows.
\begin{align*}
D(W \| V |P) &:= D(W\times P \| V\times P ) ,
\end{align*}
where the relative entropy between two distributions $P$ and $P'$
is defined in the conventional way as
$D(P\|P') :=
\sum_x P(x) \log \frac{P(x)}{P'(x)} .
$
Hence, the relative entropy 
defined in the previous section
is characterized as follows \cite[(24)]{HN}.
\begin{align}
\nonumber
& D(W_\theta \| W_{\theta'} )
= (\theta -\theta') \frac{d \phi}{d\theta}(\theta) - \phi(\theta) +\phi(\theta')  \\
=& \sum_{x,x^\prime} \overline{P}^1_\theta(x^\prime) 
W_\theta(x|x^\prime) 
\log \frac{\overline{W}_{\theta}(x|x')}{\overline{W}_{\theta'}(x|x')}
- \phi(\theta) +\phi(\theta')  \nonumber \\
=& \sum_{x,x^\prime} \overline{P}^1_\theta(x^\prime) 
W_\theta(x|x^\prime) 
\log \frac{{W}_{\theta}(x|x')}{{W}_{\theta'}(x|x')}
-\log \frac{\overline{P}^3_{\theta}(x)}{\overline{P}^3_{\theta'}(x)}
+\log \frac{\overline{P}^3_{\theta}(x')}{\overline{P}^3_{\theta'}(x')}
\nonumber \\
\stackrel{(a)}{=}& \sum_{x,x^\prime} \overline{P}^1_\theta(x^\prime) 
W_\theta(x|x^\prime) \log \frac{W_\theta(x|x^\prime)}{W_{\theta'}(x|x^\prime)} 
=D(W_\theta \| W_{\theta'} |\overline{P}^1_\theta) ,
\Label{eq:markov-divergence-cdf-relation}
\end{align}
where $(a)$ follows from the fact that
$\overline{P}^1_\theta(x)=\sum_{x'}W_\theta(x|x^\prime)\overline{P}^1_\theta(x^\prime) $.

\begin{proofof}{\eqref{th15-2}}
Since the map $W' \mapsto 
-\sum_{x,x^\prime} \overline{P}^1_\theta(x^\prime) 
W_\theta(x|x^\prime) \log W'(x|x^\prime)
$ is convex for a given $\theta$,
\eqref{eq:markov-divergence-cdf-relation} guarantees 
\eqref{th15-2}.
\end{proofof}


\subsection{Fisher information and variance}
Using the Fisher information $J_{\theta}^1$ of the family $\{ \overline{P}^1_{\theta}\}_{\theta}$
of stationary distributions,
we discuss the Fisher information 
$J_{\theta}^2$
of the family $ \{ W_{\theta} \times \overline{P}^1_{\theta}\}_{\theta}$ of joint distributions
in the following lemma.
\begin{lemma} \Label{L11}
The Fisher information $J_{\theta}^2$ can be written as
\begin{align}
J_{\theta}^2 = \frac{d^2\phi}{d\theta^2} (\theta) + J_{\theta}^1. \Label{27-10}
\end{align}
\end{lemma}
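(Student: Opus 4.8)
The plan is to compute the Fisher information $J^2_\theta$ of the joint-distribution family $\{W_\theta\times\overline{P}^1_\theta\}$ directly from its logarithm, and to recognize the resulting terms as $\frac{d^2\phi}{d\theta^2}(\theta)$ and $J^1_\theta$. Recall from \eqref{5-6} and \eqref{12-26-1} that
\begin{align}
\log\bigl(W_\theta\times\overline{P}^1_\theta\bigr)(x,x')
= \log\overline{W}_\theta(x|x') + \log\overline{P}^3_\theta(x) - \log\overline{P}^3_\theta(x') - \phi(\theta) + \log\overline{P}^1_\theta(x'),
\end{align}
so that, using $\log\overline{W}_\theta(x|x')=\log W(x|x')+\theta g(x,x')$ and the relation $\overline{P}^1_\theta(x')=\overline{P}^3_\theta(x')\overline{P}^2_\theta(x')/(\sum_{x''}\overline{P}^3_\theta(x'')\overline{P}^2_\theta(x''))$, the $\theta$-derivative of the log-likelihood is
\begin{align}
\ell'_\theta(x,x') := \frac{\partial}{\partial\theta}\log\bigl(W_\theta\times\overline{P}^1_\theta\bigr)(x,x')
= g(x,x') - \phi'(\theta) + \frac{d}{d\theta}\log\overline{P}^3_\theta(x) + \frac{d}{d\theta}\log\overline{P}^2_\theta(x'),
\end{align}
where I have absorbed the normalization constant; the point is that the score splits into a part depending on $x$ and a part depending on $x'$.

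First I would write $J^2_\theta = \mathsf{E}_\theta[(\ell'_\theta)^2]$ and, more conveniently, $J^2_\theta=-\mathsf{E}_\theta[\ell''_\theta]$. Taking a second derivative of $\log(W_\theta\times\overline{P}^1_\theta)$ and averaging, the $-\phi''(\theta)$ term survives directly, contributing $\frac{d^2\phi}{d\theta^2}(\theta)$ after a sign; by Lemma~\ref{lemma:markov-derivative-expectation} the first-derivative pieces involving $\overline{P}^3_\theta$ and $\overline{P}^2_\theta$ have vanishing expectation (this is exactly the stationarity identity $\overline{P}^1_\theta(x)=\sum_{x'}W_\theta(x|x')\overline{P}^1_\theta(x')$ used in \eqref{eq:markov-divergence-cdf-relation}). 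The remaining contribution comes from the $x'$-marginal of $W_\theta\times\overline{P}^1_\theta$, which is precisely $\overline{P}^1_\theta$, and its log-derivative-squared expectation is by definition $J^1_\theta$. Collecting the three contributions gives \eqref{27-10}.

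The cleaner route, which I would actually carry out, is to use the decomposition $W_\theta\times\overline{P}^1_\theta(x,x')=W_\theta(x|x')\,\overline{P}^1_\theta(x')$ and the chain rule for Fisher information of a product: $J^2_\theta$ equals the Fisher information of the marginal $\overline{P}^1_\theta$, namely $J^1_\theta$, plus the averaged conditional Fisher information $\mathsf{E}_{\overline{P}^1_\theta}\bigl[\,\mathsf{E}_\theta[(\partial_\theta\log W_\theta(X|x'))^2\mid X'=x']\,\bigr]$, provided the cross term $\mathsf{E}_\theta[\partial_\theta\log\overline{P}^1_\theta(X')\cdot\partial_\theta\log W_\theta(X|X')]$ vanishes. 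That cross term vanishes because, conditionally on $X'=x'$, the inner expectation $\mathsf{E}_\theta[\partial_\theta\log W_\theta(X|x')\mid X'=x']=\partial_\theta\sum_x W_\theta(x|x')=0$. It then remains to identify the averaged conditional Fisher information with $\frac{d^2\phi}{d\theta^2}(\theta)$: from $\partial_\theta\log W_\theta(x|x')=g(x,x')-\phi'(\theta)+\partial_\theta\log\overline{P}^3_\theta(x)-\partial_\theta\log\overline{P}^3_\theta(x')$ one computes its variance under $W_\theta\times\overline{P}^1_\theta$ and, again invoking the stationarity identity to kill the $\overline{P}^3_\theta$-difference terms in expectation and the martingale-type identity $\mathsf{E}_\theta[\partial_\theta\log\overline{P}^3_\theta(X)-\partial_\theta\log\overline{P}^3_\theta(X')]=0$, the variance reduces to $\mathsf{V}_\theta[g(X,X')]$ adjusted by covariance terms that assemble into $\phi''(\theta)$; alternatively and most directly, differentiate the identity $\eta(\theta)=\phi'(\theta)=\mathsf{E}_\theta[g(X,X')]$ from Lemma~\ref{lemma:markov-derivative-expectation} once more in $\theta$, which produces $\phi''(\theta)$ as the derivative of a $\theta$-dependent expectation and yields the conditional-Fisher-information expression after accounting for the $\theta$-dependence of both $W_\theta$ and $\overline{P}^1_\theta$.

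The main obstacle is bookkeeping the $\theta$-dependence of the eigenvectors $\overline{P}^2_\theta,\overline{P}^3_\theta$: their derivatives do not vanish pointwise, only in expectation against the stationary distribution, so the argument hinges on applying the stationarity identity $\overline{P}^1_\theta(x)=\sum_{x'}W_\theta(x|x')\overline{P}^1_\theta(x')$ (and its differentiated form) at exactly the right places to annihilate these terms. The cleanest way to sidestep this entirely is the product-chain-rule decomposition above together with differentiating $\phi'(\theta)=\mathsf{E}_\theta[g(X,X')]$, so that the eigenvector derivatives never need to be manipulated explicitly; this is the route I would present.
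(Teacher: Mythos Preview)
Your first route---compute $J^2_\theta=-\mathsf{E}_\theta[\ell''_\theta]$, split $\log(W_\theta\times\overline{P}^1_\theta)=\log W_\theta(\cdot|\cdot)+\log\overline{P}^1_\theta$, and use that both marginals of $W_\theta\times\overline{P}^1_\theta$ equal $\overline{P}^1_\theta$ so that $\mathsf{E}_\theta\bigl[\tfrac{d^2}{d\theta^2}\log\overline{P}^3_\theta(X)\bigr]=\mathsf{E}_\theta\bigl[\tfrac{d^2}{d\theta^2}\log\overline{P}^3_\theta(X')\bigr]$---is correct and is exactly the paper's proof (Appendix~\ref{as3}, display~\eqref{25-1}). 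One wording quibble: what cancels are the \emph{second} $\theta$-derivatives of $\log\overline{P}^3_\theta$, not ``first-derivative pieces''; the cancellation comes purely from equality of the two marginals, not from Lemma~\ref{lemma:markov-derivative-expectation}.

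Your second route (the chain-rule/product decomposition $J^2_\theta=J^1_\theta+\text{conditional Fisher information}$, with the cross term killed by $\sum_x\partial_\theta W_\theta(x|x')=0$) is a legitimate reframing, but your proposed ways of \emph{identifying} the conditional Fisher information with $\phi''(\theta)$ are where the argument leaks. Saying the variance of $\partial_\theta\log W_\theta(X|X')$ ``reduces to $\mathsf{V}_\theta[g(X,X')]$ adjusted by covariance terms that assemble into $\phi''(\theta)$'' is circular: that identity is the content of Lemma~\ref{L11-2} (equation~\eqref{27-11}), which in the paper is \emph{derived from} Lemma~\ref{L11}, not used to prove it. And differentiating $\phi'(\theta)=\mathsf{E}_\theta[g(X,X')]$ leaves a term $\mathsf{E}_\theta\bigl[\partial_\theta\log\overline{P}^3_\theta(X)\cdot\partial_\theta\log W_\theta(X|X')\bigr]$ that does not vanish by any score identity (conditioning on $X'$ does not help, since this term depends on $X$), plus the contribution from $\partial_\theta\overline{P}^1_\theta$, so that route is incomplete as stated. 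The clean way to finish the chain-rule approach is to evaluate the conditional Fisher information via $-\mathsf{E}_\theta[\partial^2_\theta\log W_\theta(X|X')]$ and cancel the $\overline{P}^3_\theta$ second derivatives by stationarity---which is precisely your first route and the paper's computation.
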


\begin{lemma} \Label{L11-2}
The second derivative $\frac{d^2\phi}{d\theta^2} (\theta)$ is calculated as
\begin{align}
\frac{d^2\phi}{d\theta^2} (\theta)
=
\mathsf{V}_\theta \Bigl[
g(X,X')- \frac{d \phi}{d \theta}(\theta)
+ \frac{d}{d\theta} \log \overline{P}^3_{\theta}(X)
- \frac{d}{d\theta} \log \overline{P}^3_{\theta}(X') \Bigr].\Label{27-11}
\end{align}
In particular, when $\theta=0$,
\begin{align}
\frac{d^2\phi}{d\theta^2} (0)
=
\mathsf{V}_0 [g(X,X')]
+
2 \sum_{x,x'}
W(x|x') g(x,x') \frac{d \overline{P}^2_{\theta}(x')}{d \theta}\Bigr|_{\theta=0}.
\Label{27-12}
\end{align}
\end{lemma}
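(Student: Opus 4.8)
The plan is to compute the second derivative of $\phi$ by differentiating the Perron–Frobenius eigenvalue equation twice and recognizing the result as a variance. First I would set up notation: write $\mathsf{l}_\theta(x,x') := \log W_\theta(x|x')$ for the log of the genuine transition matrix, and recall from \eqref{eq:taking-derivative-1} in the proof of Lemma \ref{lemma:markov-derivative-expectation} that
\begin{align*}
\frac{d}{d\theta}\mathsf{l}_\theta(x,x')
= g(x,x') - \frac{d\phi}{d\theta}(\theta)
+ \frac{d}{d\theta}\log\overline{P}^3_\theta(x)
- \frac{d}{d\theta}\log\overline{P}^3_\theta(x').
\end{align*}
Call the right-hand side $h_\theta(x,x')$; Lemma \ref{lemma:markov-derivative-expectation} already gives $\mathsf{E}_\theta[h_\theta(X,X')]=0$ since $\mathsf{E}_\theta[g(X,X')]=\frac{d\phi}{d\theta}(\theta)$ and the telescoping term has zero mean under the stationary joint distribution $W_\theta\times\overline{P}^1_\theta$ (because $\overline{P}^1_\theta$ is stationary for $W_\theta$). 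So \eqref{27-11} is equivalent to the claim $\frac{d^2\phi}{d\theta^2}(\theta)=\mathsf{E}_\theta[h_\theta(X,X')^2]$.

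The key computational step is to differentiate the identity $\sum_{x'} W_\theta(x|x')\,\overline{P}^1_\theta(x') = \overline{P}^1_\theta(x)$, or equivalently to use the general fact (a consequence of Lemma \ref{L7} / Lemma \ref{L20}) that $\frac{d^2\phi}{d\theta^2}(\theta) = \lim_{t\to 0}\frac{2}{t^2}D(W_\theta\|W_{\theta+t})$, and then expand $D(W_\theta\|W_{\theta+t}) = D(W_\theta\|W_{\theta+t}\,|\,\overline{P}^1_\theta)$ via \eqref{eq:markov-divergence-cdf-relation}. Writing this conditional relative entropy out as $\sum_{x,x'}\overline{P}^1_\theta(x')W_\theta(x|x')\big(\mathsf{l}_\theta(x,x')-\mathsf{l}_{\theta+t}(x,x')\big)$ and Taylor expanding $\mathsf{l}_{\theta+t}$ to second order, the first-order term is $-t\,\mathsf{E}_\theta[h_\theta]=0$ and the second-order term contributes $\frac{t^2}{2}\big(\mathsf{E}_\theta[h_\theta^2] - \mathsf{E}_\theta[\tfrac{d^2}{d\theta^2}\mathsf{l}_\theta]\big)$; the last expectation vanishes because differentiating $\sum_x W_\theta(x|x')=1$ (i.e. $\sum_x e^{\mathsf{l}_\theta(x,x')}=1$) twice and averaging over $\overline{P}^1_\theta(x')$ gives $\mathsf{E}_\theta[\tfrac{d^2}{d\theta^2}\mathsf{l}_\theta + (\tfrac{d}{d\theta}\mathsf{l}_\theta)^2]=0$, while the cross term $\mathsf{E}_\theta[(\tfrac{d}{d\theta}\mathsf{l}_\theta)^2]=\mathsf{E}_\theta[h_\theta^2]$ cancels the $h_\theta^2$ we want — wait, this needs care. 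I would instead directly differentiate the eigenvalue relation: from $\overline{W}_\theta \overline{P}^2_\theta = \lambda_\theta \overline{P}^2_\theta$, take $\log$, differentiate twice, and pair against the left eigenvector $\overline{P}^3_\theta$ to kill the $\overline{P}^2_\theta$-derivative terms, leaving $\frac{d^2\phi}{d\theta^2}$ in terms of $\mathsf{E}_\theta$ of squares and first derivatives of $\log\overline{P}^2_\theta$; reorganizing using that $\overline{P}^1_\theta \propto \overline{P}^2_\theta\overline{P}^3_\theta$ yields exactly the variance in \eqref{27-11}. The main obstacle is bookkeeping the eigenvector-derivative terms and showing they assemble into the telescoping $\frac{d}{d\theta}\log\overline{P}^3_\theta(X)-\frac{d}{d\theta}\log\overline{P}^3_\theta(X')$ rather than contributing genuine extra terms — this is where stationarity of $\overline{P}^1_\theta$ is used decisively.

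For the special case \eqref{27-12} at $\theta=0$: here $W_0 = W$ (since $\overline{P}^3_0 = (1,\ldots,1)^T$ and $\lambda_0=1$ for the stochastic matrix $W$), so the term $\frac{d}{d\theta}\log\overline{P}^3_\theta(X)$ at $\theta=0$ equals $\frac{d}{d\theta}\overline{P}^3_\theta(X)|_{\theta=0}$. I would expand the variance in \eqref{27-11} at $\theta=0$: it splits as $\mathsf{V}_0[g(X,X')]$ plus twice the covariance of $g(X,X')$ with $\big(\frac{d}{d\theta}\log\overline{P}^3_\theta(X)-\frac{d}{d\theta}\log\overline{P}^3_\theta(X')\big)$ plus the variance of that telescoping term. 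The covariance with the $X$-argument piece vanishes by stationarity (sum over $x$ first), and the variance of the telescoping difference should also vanish after using stationarity and the identity $\sum_x W(x|x')\frac{d}{d\theta}\overline{P}^3_\theta(x)|_0$-type relations — more precisely one rewrites everything in terms of $\overline{P}^2_\theta$ using $\overline{P}^1_\theta\propto\overline{P}^2_\theta\overline{P}^3_\theta$ and the normalization, so that only the cross term with $\frac{d\overline{P}^2_\theta(x')}{d\theta}|_0$ survives, giving \eqref{27-12}. I expect the delicate point to be correctly tracking which eigenvector ($\overline{P}^2$ versus $\overline{P}^3$) appears and using the normalization conditions on their derivatives; I would fix these by differentiating $\sum_x \overline{P}^2_\theta(x)\overline{P}^3_\theta(x) = \text{const}$ and $\sum_x \overline{P}^3_\theta(x)\overline{W}_\theta(x|x')=\lambda_\theta\overline{P}^3_\theta(x')$ at $\theta=0$.
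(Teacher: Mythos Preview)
Your approach to \eqref{27-11} via the limit $\frac{d^2\phi}{d\theta^2}(\theta)=\lim_{t\to 0}\frac{2}{t^2}D(W_\theta\|W_{\theta+t})$ actually works; the confusion is an arithmetic slip. The second-order term of $D(W_\theta\|W_{\theta+t})=\mathsf{E}_\theta[\mathsf{l}_\theta-\mathsf{l}_{\theta+t}]$ is simply $-\tfrac{t^2}{2}\mathsf{E}_\theta[\tfrac{d^2}{d\theta^2}\mathsf{l}_\theta]$ (no $h_\theta^2$ appears at that stage), and your identity $\mathsf{E}_\theta[\tfrac{d^2}{d\theta^2}\mathsf{l}_\theta+h_\theta^2]=0$ then converts it to $+\tfrac{t^2}{2}\mathsf{E}_\theta[h_\theta^2]$, which is precisely \eqref{27-11}. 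The paper's route is slightly different: it computes the Fisher information $J_\theta^2$ of the joint family $\{W_\theta\times\overline{P}^1_\theta\}$ in two ways---once as $-\mathsf{E}_\theta[\tfrac{d^2}{d\theta^2}\log(W_\theta\times\overline{P}^1_\theta)]$, which yields $\tfrac{d^2\phi}{d\theta^2}+J_\theta^1$ (this is Lemma \ref{L11}), and once as $\mathsf{E}_\theta[(\tfrac{d}{d\theta}\log(W_\theta\times\overline{P}^1_\theta))^2]$, which yields $\mathsf{E}_\theta[h_\theta^2]+J_\theta^1$---and cancels the $J_\theta^1$.

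For \eqref{27-12} there is a real gap. Write $u(x)=\tfrac{d}{d\theta}\overline{P}^3_\theta(x)|_{\theta=0}$. The variance of the telescoping term does \emph{not} vanish: using the derivative of the left-eigenvector equation at $\theta=0$, namely $u(x')-\sum_x u(x)W(x|x')=\sum_x g(x,x')W(x|x')-\eta(0)$, one finds $\mathsf{V}_0[u(X)-u(X')]=2\,\mathsf{Cov}_0[u(X'),g(X,X')]$, which is generically nonzero. The correct bookkeeping combines this with the cross terms to give $\tfrac{d^2\phi}{d\theta^2}(0)=\mathsf{V}_0[g]+2\,\mathsf{E}_0[g(X,X')\,u(X)]$; converting the last expectation into $\sum_{x,x'}W(x|x')g(x,x')\tfrac{d\overline{P}^2_\theta(x')}{d\theta}|_{\theta=0}$ then requires invoking \emph{both} eigenvector derivative equations (for $\overline{P}^2_\theta$ and $\overline{P}^3_\theta$) and cancelling $\sum_x u(x)v(x)$ terms, not merely normalization identities. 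So your plan as written would not close. The paper avoids this entirely: for \eqref{27-12} it does not expand \eqref{27-11} at all, but instead differentiates the scalar identity $e^{\phi(\theta)}=\sum_{x,x'}W(x|x')e^{\theta g(x,x')}\overline{P}^2_\theta(x')$ (obtained by summing the right-eigenvector equation over $x$, under the normalization $\sum_x\overline{P}^2_\theta(x)=1$) twice in $\theta$ and sets $\theta=0$; the $\tfrac{d^2\overline{P}^2_\theta}{d\theta^2}$ contribution drops out because $\sum_x W(x|x')=1$.
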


Proofs of Lemmas \ref{L11} and \ref{L11-2}
are given in Appendix \ref{as3}.
Further, the quantity $\frac{d^2 \phi}{d \theta^2}(0)$ has another form \cite[Theorem 6.6]{HW14-2}.
Using Lemma \ref{L11-2}, we can show Lemma \ref{L1} as follows.

\begin{proofof}{Lemma \ref{L1}}
Due to (\ref{27-11}), the non-negativity of variance implies that 
$\phi(\theta)$ is convex.
Since Condition (2) trivially implies Condition (3),
it is enough to show that
Condition (1) implies Condition (2) and Condition (3) implies Condition (1).

Assume Condition (1).
Then, the random variable $g(X,X')- \frac{d \phi}{d \theta}(\theta)
+ \frac{d}{d\theta} \log \overline{P}^3_{\theta}(X)
- \frac{d}{d\theta} \log \overline{P}^3_{\theta}(X')
$ is not a constant on $\cX^2_W$.
Hence, the variance in (\ref{27-11}) is strictly greater than zero,
which implies Condition (2).

Conversely, we assume that Condition (1) does not hold, i.e.,
$g(x,x')=f(x)-f(x')+C$ for any $(x,x')\in \cX^2_W$ with a constant 
$C \in \mathbb{R}$.
Then, we can find that the Perron-Frobenius eigenvalue of 
$\overline{W}_\theta(x|x^\prime) = W(x|x^\prime) e^{\theta f(x) - \theta f(x^\prime) + \theta C}$ 
is $\lambda_\theta = e^{\theta C}$ and
its right eigenvector is $\overline{P}^2_\theta$.  
Thus, we have $\frac{d^2\phi(\theta)}{d\theta^2} =0$, i.e.,
Condition (3) does not hold.
Hence, Condition (3) implies Condition (1).
\end{proofof}


\section{Stationary $n+1$-observation case}\Label{s6}
\subsection{Information quantities}
Similar to the previous section, this section also discusses the one-parameter case with the stationary initial distribution $\overline{P}^1_{\theta}$.
Now, we consider the distribution 
$W^{\times n}_\theta \times \overline{P}^1_\theta$ on $\cX^n$, which is defined as
\begin{align}
W^{\times n}_\theta \times \overline{P}^1_\theta(x_n,\ldots, x_1)
:= W_\theta(x_{n+1}|x_{n})\cdots W_\theta(x_2|x_{1}) \overline{P}^1_\theta(x_1).
\end{align}
We also define the random variable $g^n(X^{n+1})
:=\sum_{k=1}^{n}g(X_{k+1},X_k)$ for $X^{n+1}:=(X_{n+1},\ldots, X_1)$.
In this section,
we denote the expectation and the variance under the distribution $W_{\theta}^{n} \times \overline{P}^1_{\theta}$
by $\mathsf{E}_\theta$ and $\mathsf{V}_\theta$.
Then, the cumulant generating function $\phi_{n}(\theta):=\log \mathsf{E}_0 [\exp (\theta g^n(X^{n+1}))]$ satisfies 
\begin{align}
\frac{ d\phi_{n}}{d \theta}(\theta) =& 
\mathsf{E}_\theta [g^n(X^{n+1})] = n \eta(\theta) .
\Label{25-7}
\end{align} 

Now, we calculate information quantities.
Similar to Lemma \ref{L11}, the Fisher information can be calculated as follows.
\begin{lemma}\Label{L11-11}
The Fisher information $J^{n+1}_{\theta}$ of the family $\{W^{\times n}_\theta \times \overline{P}^1_\theta\}_{\theta}$ 
can be written as
\begin{align}
J^{n+1}_{\theta}= n \frac{d^2 \phi}{d\theta^2}(\theta)+ J^1_{\theta}.
\end{align} 
\end{lemma}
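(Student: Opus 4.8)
The plan is to compute $J^{n+1}_\theta$ directly from the score of the Markov measure $W^{\times n}_\theta \times \overline{P}^1_\theta$ and to exploit the fact that the centred per-step scores form a martingale difference sequence. First I would write the log-likelihood of $X^{n+1}=(X_{n+1},\ldots,X_1)$ as $\log \overline{P}^1_\theta(X_1) + \sum_{k=1}^{n}\log W_\theta(X_{k+1}|X_k)$, so that the score is
\begin{align}
s_{n+1}(X^{n+1}) = \frac{d}{d\theta}\log \overline{P}^1_\theta(X_1) + \sum_{k=1}^{n} \frac{d}{d\theta}\log W_\theta(X_{k+1}|X_k),
\end{align}
and $J^{n+1}_\theta = \mathsf{E}_\theta[s_{n+1}(X^{n+1})^2]$. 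Set $T_k := \frac{d}{d\theta}\log W_\theta(X_{k+1}|X_k)$ and $B := \frac{d}{d\theta}\log \overline{P}^1_\theta(X_1)$. Since $W_\theta(\cdot|x')$ is a probability distribution for each $x'$, we have $\sum_{x}W_\theta(x|x')\frac{d}{d\theta}\log W_\theta(x|x') = \frac{d}{d\theta}\sum_x W_\theta(x|x') = 0$, that is, $\mathsf{E}_\theta[T_k \mid X_k,\ldots,X_1]=0$ by the Markov property. (If desired, \eqref{eq:taking-derivative-1} gives the explicit form $T_k = g(X_{k+1},X_k)-\eta(\theta)+\frac{d}{d\theta}\log\overline{P}^3_\theta(X_{k+1})-\frac{d}{d\theta}\log\overline{P}^3_\theta(X_k)$, but we do not need it.)

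Next I would expand the square. For $j<k$, $T_j$ is a function of $X_1,\ldots,X_{j+1}$, hence of $X_1,\ldots,X_k$, so conditioning on $X_1,\ldots,X_k$ and using $\mathsf{E}_\theta[T_k\mid X_1,\ldots,X_k]=0$ gives $\mathsf{E}_\theta[T_jT_k]=0$; the same argument gives $\mathsf{E}_\theta[B\,T_k]=0$ for every $k\ge1$. Hence all cross terms vanish and
\begin{align}
J^{n+1}_\theta = \mathsf{E}_\theta[B^2] + \sum_{k=1}^{n}\mathsf{E}_\theta[T_k^2].
\end{align}
Because $X_1\sim\overline{P}^1_\theta$, the term $\mathsf{E}_\theta[B^2]$ is exactly the Fisher information $J^1_\theta$ of the family $\{\overline{P}^1_\theta\}_\theta$. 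Moreover, since $\overline{P}^1_\theta$ is the stationary distribution of $W_\theta$, the marginal law of $X_k$ under $W^{\times n}_\theta\times\overline{P}^1_\theta$ is $\overline{P}^1_\theta$ for every $k$, so each consecutive pair satisfies $(X_{k+1},X_k)\sim W_\theta\times\overline{P}^1_\theta$, and $\mathsf{E}_\theta[T_k^2]$ equals a common value $\kappa(\theta):=\mathsf{E}_{W_\theta\times\overline{P}^1_\theta}\big[(\tfrac{d}{d\theta}\log W_\theta(X|X'))^2\big]$ independent of $k$.

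Finally I would identify $\kappa(\theta)$ with $\frac{d^2\phi}{d\theta^2}(\theta)$: applying the same expansion in the case $n=1$ gives $J^2_\theta = \kappa(\theta)+J^1_\theta$, while Lemma \ref{L11} states $J^2_\theta = \frac{d^2\phi}{d\theta^2}(\theta)+J^1_\theta$, so $\kappa(\theta)=\frac{d^2\phi}{d\theta^2}(\theta)$. Substituting back yields $J^{n+1}_\theta = n\,\frac{d^2\phi}{d\theta^2}(\theta)+J^1_\theta$. (Equivalently one may induct on $n$, splitting off the last factor $W_\theta(X_{n+1}|X_n)$ and using the same zero-conditional-mean argument; the content is identical.) The only genuinely delicate point is the justification that all cross terms vanish, i.e.\ that the centred scores $T_k$ form a martingale difference array for the natural filtration — this relies on the column-stochasticity of $W_\theta$ together with stationarity of the initial distribution $\overline{P}^1_\theta$, so that no boundary term survives and every consecutive pair has the same joint law $W_\theta\times\overline{P}^1_\theta$; the remaining steps are bookkeeping and an appeal to Lemma \ref{L11}.
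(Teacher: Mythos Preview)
Your proof is correct but follows a different route from the paper's. The paper simply states that the proof goes ``in the same way as Lemma \ref{L11}'', i.e.\ via the second-derivative computation \eqref{25-1}: one writes $J^{n+1}_\theta = -\mathsf{E}_\theta\big[\tfrac{d^2}{d\theta^2}\log(W^{\times n}_\theta\times\overline{P}^1_\theta)\big]$, uses that $-\tfrac{d^2}{d\theta^2}\log W_\theta(x_{k+1}|x_k) = \tfrac{d^2\phi}{d\theta^2}(\theta) - \tfrac{d^2}{d\theta^2}\log\tfrac{\overline{P}^3_\theta(x_{k+1})}{\overline{P}^3_\theta(x_k)}$, and observes that the sum over $k$ of the second term telescopes to a quantity with zero expectation because $X_1$ and $X_{n+1}$ share the marginal $\overline{P}^1_\theta$. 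You instead use the squared-score form of Fisher information and a martingale-difference argument to show that the per-step scores $T_k$ are mutually orthogonal and orthogonal to $B$, reducing $J^{n+1}_\theta$ to $n$ identical one-step variances plus $J^1_\theta$, and then invoke Lemma \ref{L11} as a black box for the $n=1$ case to identify that one-step variance with $\tfrac{d^2\phi}{d\theta^2}(\theta)$. Your route is more modular---it never touches the explicit exponential-family representation of $W_\theta$ until the final appeal to Lemma \ref{L11}---whereas the paper's telescoping argument exploits that structure directly. One small clarification on your closing remark: the vanishing of the cross terms (the martingale-difference property $\mathsf{E}_\theta[T_k\mid X_1,\ldots,X_k]=0$) requires only that $W_\theta(\cdot|x')$ is a probability distribution; stationarity of $\overline{P}^1_\theta$ is needed only to make the $\mathsf{E}_\theta[T_k^2]$ equal across $k$.
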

The proof can be done in the same way as Lemma \ref{L11}.
The conditional relative entropy is characterized by the Bregman divergence defined by the convex function $\phi(\theta)$ as follows.
\begin{align}
\nonumber 
&D(W_\theta^{\times n} \| W_{\theta'}^{\times n} |\overline{P}^1_\theta) 
:=
D(W_\theta^{\times n} \times \overline{P}^1_\theta\| W_{\theta'}^{\times n} \times \overline{P}^1_\theta) \\
=& n
((\theta -\theta') \frac{d \phi}{d\theta}(\theta) - \phi(\theta) + \phi(\theta'))
=n D(W_\theta \| W_{\theta'} ).
\label{25-6} 
\end{align}

\subsection{Asymptotically efficient estimator}
The relation \eqref{25-7} implies that $\frac{g^n(X^{n+1})}{n}$ is an unbiased estimator for the parameter $\eta$. 
The variance of $g^n(X^{n+1})$ is evaluated as follows.
\begin{lemma}\Label{L20b}
The inequalities
\begin{align}
n \frac{d^2 \phi}{d\theta^2} (\theta)
(1-2\sqrt{\frac{\hat{\mathsf{V}}_\theta}{ n \frac{d^2 \phi}{d\theta^2} (\theta)}})^2
\le \mathsf{V}_\theta [g^n(X^{n+1})]
 \le
n \frac{d^2 \phi}{d\theta^2} (\theta)
(1+2\sqrt{\frac{\hat{\mathsf{V}}_\theta}{ n \frac{d^2 \phi}{d\theta^2} (\theta)}})^2
\Label{25-5}
\end{align}
hold, where
$\hat{\mathsf{V}}_\theta:=
\mathsf{V}_\theta[\frac{d}{d\theta} \log \overline{P}^3_{\theta}(X)]
=
\sum_{x} \overline{P}^1_{\theta}(x)
(\frac{d}{d\theta} \log \overline{P}^3_{\theta}(x))^2$.
\end{lemma}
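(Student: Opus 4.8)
Write $h_\theta(x):=\frac{d}{d\theta}\log\overline{P}^3_\theta(x)$ and $\eta(\theta):=\frac{d\phi}{d\theta}(\theta)$. The plan is to split $g^n(X^{n+1})$ into a martingale sum whose variance is \emph{exactly} $n\frac{d^2\phi}{d\theta^2}(\theta)$ plus a boundary correction of controlled variance, and then to apply the triangle inequality for standard deviations. By \eqref{eq:taking-derivative-1}, the transition score for the step $X_k\to X_{k+1}$ is
\begin{align*}
Z_k:=\frac{d}{d\theta}\log W_\theta(X_{k+1}|X_k)=g(X_{k+1},X_k)-\eta(\theta)+h_\theta(X_{k+1})-h_\theta(X_k),
\end{align*}
so that, since the sum $\sum_{k=1}^n\bigl(h_\theta(X_{k+1})-h_\theta(X_k)\bigr)$ telescopes,
\begin{align*}
g^n(X^{n+1})=\sum_{k=1}^n Z_k+n\eta(\theta)+h_\theta(X_1)-h_\theta(X_{n+1}).
\end{align*}
As $n\eta(\theta)$ is constant, $\mathsf{V}_\theta[g^n(X^{n+1})]=\mathsf{V}_\theta\bigl[\sum_k Z_k+h_\theta(X_1)-h_\theta(X_{n+1})\bigr]$.

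\textbf{Step 1: variance of the martingale part.} I would first show $\mathsf{V}_\theta[\sum_{k=1}^n Z_k]=n\frac{d^2\phi}{d\theta^2}(\theta)$. The point is that $(Z_k)$ is a martingale difference sequence: by the Markov property and $\sum_x W_\theta(x|x')=1$,
\begin{align*}
\mathsf{E}_\theta[Z_k\mid X_1,\ldots,X_k]=\sum_x W_\theta(x|X_k)\frac{d}{d\theta}\log W_\theta(x|X_k)=\frac{d}{d\theta}\sum_x W_\theta(x|X_k)=0.
\end{align*}
In particular $\mathsf{E}_\theta[Z_k]=0$, and for $j<k$ the variable $Z_j$ is a function of $X_1,\ldots,X_k$ only, so $\mathsf{E}_\theta[Z_jZ_k]=\mathsf{E}_\theta[Z_j\,\mathsf{E}_\theta[Z_k\mid X_1,\ldots,X_k]]=0$; hence the $Z_k$ are orthogonal and $\mathsf{V}_\theta[\sum_k Z_k]=\sum_k\mathsf{V}_\theta[Z_k]$. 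Because the chain starts from its stationary distribution $\overline{P}^1_\theta$, each pair $(X_{k+1},X_k)$ has law $W_\theta\times\overline{P}^1_\theta$, so Lemma \ref{L11-2} (equation \eqref{27-11}) gives $\mathsf{V}_\theta[Z_k]=\frac{d^2\phi}{d\theta^2}(\theta)$ for every $k$, whence the claim.

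\textbf{Step 2: boundary term and conclusion.} By stationarity, $X_1$ and $X_{n+1}$ each have marginal $\overline{P}^1_\theta$, so $\mathsf{V}_\theta[h_\theta(X_1)]=\mathsf{V}_\theta[h_\theta(X_{n+1})]=\hat{\mathsf{V}}_\theta$, and the triangle inequality for the $L^2$-norm yields $\sqrt{\mathsf{V}_\theta[h_\theta(X_1)-h_\theta(X_{n+1})]}\le 2\sqrt{\hat{\mathsf{V}}_\theta}$. Applying the triangle inequality once more to $g^n(X^{n+1})=\sum_k Z_k+\bigl(h_\theta(X_1)-h_\theta(X_{n+1})\bigr)+\mathrm{const}$,
\begin{align*}
\Bigl|\sqrt{\mathsf{V}_\theta[g^n(X^{n+1})]}-\sqrt{n\,\frac{d^2\phi}{d\theta^2}(\theta)}\Bigr|\le\sqrt{\mathsf{V}_\theta[h_\theta(X_1)-h_\theta(X_{n+1})]}\le 2\sqrt{\hat{\mathsf{V}}_\theta}.
\end{align*}
Squaring this bound (immediate for the upper estimate, and for the lower one via $\sqrt{\mathsf{V}_\theta[g^n(X^{n+1})]}\ge\sqrt{n\frac{d^2\phi}{d\theta^2}(\theta)}-2\sqrt{\hat{\mathsf{V}}_\theta}$) rearranges exactly into \eqref{25-5}. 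I expect the only genuine obstacle to lie in Step 1 — namely, recognizing the increments as transition score functions, hence a martingale difference sequence, and identifying their common per-step variance with $\frac{d^2\phi}{d\theta^2}(\theta)$ via Lemma \ref{L11-2}; once this is in hand, the two triangle inequalities are routine.
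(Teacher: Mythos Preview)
Your proof is correct and has the same architecture as the paper's: both establish that the random variable
\[
\sum_{k=1}^n Z_k \;=\; g^n(X^{n+1})-n\eta(\theta)+h_\theta(X_{n+1})-h_\theta(X_1)
\]
has variance exactly $n\,\frac{d^2\phi}{d\theta^2}(\theta)$, and then obtain \eqref{25-5} from two applications of the triangle inequality for the $L^2$-norm to peel off the boundary terms $h_\theta(X_1)$ and $h_\theta(X_{n+1})$. The difference is in how that variance identity is justified. The paper appeals to Lemma~\ref{L11-11} (the $(n{+}1)$-observation Fisher information formula $J^{n+1}_\theta=n\phi''(\theta)+J^1_\theta$) together with the $(n{+}1)$-observation analog of \eqref{25-1}--\eqref{25-2}, so that $n\phi''(\theta)=J^{n+1}_\theta-J^1_\theta$ is identified with the second moment of the conditional score $\sum_k Z_k$. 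You instead observe directly that the $Z_k$ are transition scores, hence a martingale difference sequence, so their variances add; stationarity and \eqref{27-11} then give $\mathsf{V}_\theta[Z_k]=\phi''(\theta)$ for each $k$. Your route is more self-contained and probabilistic, bypassing Lemma~\ref{L11-11} entirely; the paper's route has the advantage of tying the computation explicitly to the Fisher information, which is what is needed anyway for the Cram\'er--Rao discussion that follows.
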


Hence, we obtain
\begin{align}
\mathsf{V}_\theta [\frac{g^n(X^{n+1})}{n}] 
=
\frac{\mathsf{V}_\theta [g^n(X^{n+1})] }{n^2}
= \frac{\frac{d^2\phi}{d\theta^2} (\theta)}{n} +O(\frac{1}{n\sqrt{n}}).
\Label{25-8}
\end{align}

The Fisher information $\tilde{J}^{n+1}_{\eta(\theta)}$
for the expectation parameter $\eta$ 
of the family $\{ W_\theta^{\times n} \times \overline{P}^1_{\theta}\}_{\theta}$ is 
\begin{align*}
\tilde{J}^{n+1}_{\eta(\theta)}=&
\frac{{J}^{n+1}_{\theta}} 
{(\frac{d \eta(\theta)}{d \theta})^{2}}
=
(n \frac{d^2\phi}{d\theta^2} (\theta)
+ J_{\theta}^1)
(\frac{d^2\phi}{d\theta^2} (\theta))^{-2} 
=
\frac{n(1+ \frac{J_{\theta}^1}{n\frac{d^2\phi}{d\theta^2} (\theta)})}
{(\frac{d^2\phi}{d\theta^2} (\theta))}. 
\end{align*}
That is,
the lower bound of the variance of the unbiased estimator 
given by Cram\'{e}r-Rao inequality is
$\frac{d^2\phi}{d\theta^2} (\theta)/
n(1+ \frac{J_{\theta}^1}{n\frac{d^2\phi}{d\theta^2} (\theta)})$.
Hence, any unbiased estimator 
$Z_n$
for the expectation parameter $\eta$ 
satisfies
\begin{align}
\mathsf{V}_\theta [Z_n] 
\ge
\frac{\frac{d^2\phi}{d\theta^2} (\theta)}{
n(1+ \frac{J_{\theta}^1}{n\frac{d^2\phi}{d\theta^2} (\theta)})}
=\frac{\frac{d^2\phi}{d\theta^2} (\theta)}{n}
-\frac{J_{\theta}^1}{n^2}+ o(\frac{1}{n^2})
\Label{25-6f}.
\end{align}
The relation (\ref{25-8}) shows that 
the unbiased estimator $\frac{g^n(X^{n+1})}{n}$ realizes the optimal performance with the order $\frac{1}{n}$.

\begin{proofof}{Lemma \ref{L20b}}
The combination of (\ref{25-1}) and (\ref{25-2}) implies that
$\frac{d^2 \phi}{d\theta^2} (\theta)$
is the variance of 
$[- \frac{d \phi}{d \theta}(\theta)
+ \frac{d}{d\theta} \log \overline{P}^3_{\theta}(X)
- \frac{d}{d\theta} \log \overline{P}^3_{\theta}(X')
+ g(X,X')]$
under the distribution $W_\theta\times \overline{P}^1_\theta$
in the two-observation case.
In the $n+1$-observation case, using Lemma \ref{L11-11}, we can similarly show that
$n \frac{d^2 \phi}{d\theta^2} (\theta)$
is the variance of 
$[- n \frac{d \phi}{d \theta}(\theta)
+ \frac{d}{d\theta} \log \overline{P}^3_{\theta}(X_{n+1})
- \frac{d}{d\theta} \log \overline{P}^3_{\theta}(X_1)
+ g^n(X^{n+1})]$
under the distribution $W_\theta^n \times \overline{P}^1_\theta$.

Now, we define the $2$-norm of the random variable $f(X^{n+1})$ as
$\|f\|_2:= 
\sqrt{\sum_{x^{n+1}} W_\theta^n \times P_\theta(x^{n+1}) f(x^{n+1})^2}$.
Then, we have
\begin{align*}
& \sqrt{n \frac{d^2 \phi}{d\theta^2} (\theta)}=
\|g^n(X^{n+1})- n \frac{d \phi}{d \theta}(\theta)
+ \frac{d}{d\theta} \log \overline{P}^3_{\theta}(X_{n+1})
- \frac{d}{d\theta} \log \overline{P}^3_{\theta}(X_1)
\|_2 \nonumber \\
\le &
\| g^n(X^{n+1})- n \frac{d \phi}{d \theta}(\theta)\|_2
+\|\frac{d}{d\theta} \log \overline{P}^3_{\theta}(X_{n+1})\|_2
+\|\frac{d}{d\theta} \log \overline{P}^3_{\theta}(X_{1})\|_2 \\
= &
\sqrt{\mathsf{V}_\theta [g(X^{n+1})]}
+
2\sqrt{\hat{\mathsf{V}}_\theta},
\end{align*}
which implies 
$ (\sqrt{n \frac{d^2 \phi}{d\theta^2} (\theta)}-2\sqrt{\hat{\mathsf{V}}_\theta})^2
\le \mathsf{V}_\theta [g(X^{n+1})]$.
Then, we obtain the first inequality
because 
$n \frac{d^2 \phi}{d\theta^2} (\theta)
(1-2\sqrt{\frac{\hat{\mathsf{V}}_\theta}{ n \frac{d^2 \phi}{d\theta^2} (\theta)}})^2
=
(\sqrt{n \frac{d^2 \phi}{d\theta^2} (\theta)}-2\sqrt{\hat{\mathsf{V}}_\theta})^2$.
Similarly,
since 
$\|g^n(X^{n+1})- n \frac{d \phi}{d \theta}(\theta)
+ \frac{d}{d\theta} \log \overline{P}^3_{\theta}(X_{n+1})
- \frac{d}{d\theta} \log \overline{P}^3_{\theta}(X_1)
\|_2
\ge
\|g^n(X^{n+1})- n \frac{d \phi}{d \theta}(\theta)\|_2
-\|\frac{d}{d\theta} \log \overline{P}^3_{\theta}(X_{n+1})\|_2
-\|\frac{d}{d\theta} \log \overline{P}^3_{\theta}(X_1)\|_2$,
we obtain the second inequality
because 
$\frac{d^2 \phi}{d\theta^2} (\theta)
(1+2\sqrt{\frac{\hat{\mathsf{V}}_\theta}{ n \frac{d^2 \phi}{d\theta^2} (\theta)}})^2
=
(\sqrt{n \frac{d^2 \phi}{d\theta^2} (\theta)}+2\hat{\mathsf{V}}_\theta)^2$.
\end{proofof}

\section{Non-stationary $n+1$-observation case}\Label{s7}
Similar to the previous section, this section also discusses the one-parameter case.
Now, we consider the non-stationary case.
Since the convergence to the stationary distribution is required,
we assume that the transition matrices $W_{\theta}$ are ergodic as well as irreducible.
Then, we fix an arbitrary initial distributions $P_\theta$ on $\cX$ such that
the $P_\theta$ is distribution is smoothly parameterized by the parameter $\theta$.
In this section, 
we assume that $W_\theta$ is the exponential family generated by the generator $g(x,x')$
and the random variable $X^{n+1}:= (X_{n+1}, \ldots,X_1)$
is subject to 
$W_{\theta}^{\times n} \times P_\theta$ with the unknown parameter $\theta$.
Then, we denote the expectation and the variance under the distribution $W_{\theta}^{\times n} \times P_\theta$
by $\mathsf{E}_\theta$ and $\mathsf{V}_\theta$.
In this  general case, the relation (\ref{25-6}) does not hold.
In stead of these relations, as is shown in \cite[Lemma 5.4]{HW14-2}, 
we have
\begin{align}
\Label{1-4-1}
\lim_{n \to \infty}
\frac{1}{n}D(W_{\theta}^{\times n} \times P_{\theta} 
\|W_{\theta'}^{\times n} \times P_{\theta'})
=&
D(W_{\theta}\|W_{\theta'}), \\
\Label{1-4-2}
\lim_{n \to \infty}
\frac{1}{n}D_{1+s}(W_{\theta}^{\times n} \times P_{\theta} 
\|W_{\theta'}^{\times n} \times P_{\theta'})
=&
D_{1+s}(W_{\theta}\|W_{\theta'}).
\end{align}

For a function $h$ on $\mathbb{R}$,
we define the random variable $\tilde{g}^n (X^{n+1})
:= 
{g}^n(X_{n+1})+h(X_1)$.
When we use the random variable $\tilde{g}^n (X^{n+1})/n$ as an estimator of the parameter $\eta(\theta)$, the error is measured by
the mean square error:
\begin{align}
\mathsf{MSE}_\theta [\tilde{g}^n (X^{n+1})]:=
\mathsf{E}_\theta [(\frac{\tilde{g}^n (X^{n+1})}{n}-\eta(\theta))^2 ].
\end{align}
Then,
we have
$\mathsf{E}_\theta[\tilde{g}^n(X^{n+1})]= \mathsf{E}_\theta [g^n(X^{n+1})]+
\mathsf{E}_\theta [h(X_1)]$.
In the following discussion, we employ 
the norm $\|f(X^{n+1})\|_2 :=\sqrt{\mathsf{E}_\theta [f(X^{n+1})^2]}$
for a function $f$ on $\mathbb{R}^{n+1}$.
Using the triangle inequality for this norm, 
we have
\begin{align}
&\sqrt{\mathsf{V}_\theta [g^n(X^{n+1})]}-\sqrt{\mathsf{V}_\theta [h(X_1)]}
\le 
\sqrt{\mathsf{V}_\theta[\tilde{g}^n(X^{n+1})]} \nonumber \\
\le &
\sqrt{\mathsf{V}_\theta [g^n(X^{n+1})]}+\sqrt{\mathsf{V}_\theta [h(X_1)]},
\Label{11-11-2}
\\
& \sqrt{\mathsf{E}_\theta [( \frac{g^n (X^{n+1})}{n}
- \mathsf{E}_\theta [\frac{g^n (X^{n+1})}{n}])^2 ]}
- \sqrt{\mathsf{E}_\theta [(\frac{h (X_1)}{n}+ 
\mathsf{E}_\theta [ \frac{g^n (X^{n+1})}{n}] -\eta(\theta) )^2]} \nonumber \\
\le &
\sqrt{\mathsf{E}_\theta [( \frac{\tilde{g}^n (X^{n+1})}{n}-\eta(\theta))^2 ]}
\nonumber \\
\le &
\sqrt{\mathsf{E}_\theta [( \frac{g^n (X^{n+1})}{n}
- \mathsf{E}_\theta [\frac{g^n (X^{n+1})}{n}])^2 ]}
+ \sqrt{\mathsf{E}_\theta [(\frac{h (X_1)}{n}+ 
\mathsf{E}_\theta [ \frac{g^n (X^{n+1})}{n}] -\eta(\theta) )^2]}.
\Label{11-11-3}
\end{align}
It is known that 
the expectation of $g^n(X^{n+1})$
and the variance of $\frac{g^n(X^{n+1})}{\sqrt{n}}$
converge to 
those under the stationary distribution \cite{kemeny-snell-book,DZ}.
Hence, due to (\ref{25-7}) and (\ref{25-8}), we have
\begin{align}
&\lim_{n\to \infty} \mathsf{E}_\theta [\frac{\tilde{g}^n(X^{n+1})}{n}] 
=
\lim_{n\to \infty} \mathsf{E}_\theta [\frac{g^n(X^{n+1})}{n}] 
=
\eta(\theta) =\frac{ d\phi}{d \theta}(\theta) ,\Label{27-15}\\
& \lim_{n\to \infty} n \mathsf{MSE}_\theta [\frac{\tilde{g}^n (X^{n+1})}{n}]
\stackrel{(a)}{=} 
 \lim_{n\to \infty} \mathsf{V}_\theta [\frac{\tilde{g}^n(X^{n+1})}{\sqrt{n}}] 
\nonumber
 \\
\stackrel{(b)}{=} 
 &
\lim_{n\to \infty} \mathsf{V}_\theta [\frac{g^n(X^{n+1})}{\sqrt{n}}] 
= 
\frac{d^2\phi}{d\theta^2} (\theta),
\Label{27-15b}
\end{align}
where $(a)$ and $(b)$ follow from \eqref{11-11-3} and \eqref{11-11-2}, respectively.
The relation (\ref{27-15}) shows that the estimator $\frac{\tilde{g}^n(X^{n+1})}{n}$
is asymptotically unbiased for the parameter $\eta$.
The mean square error is
$\frac{d^2 \phi}{d\theta^2}(\theta) \frac{1}{n}+
o(\frac{1}{n})$, which implies (\ref{1-15-1}).
Further, it is shown that the random variable 
$\sqrt{n}(\frac{{g}^n(X^{n+1})}{n}-\eta(0))$ asymptotically 
obeys the Gaussian distribution with the variance 
$\frac{d^2 \phi}{d\theta^2}(0)$ at $\theta=0$ \cite[Corollary 6.2]{HW14-2}.
Replacing $W_{0}$ by $W_{\theta}$, we find that 
the random variable 
$\sqrt{n}(\frac{{g}^n(X^{n+1})}{n}-\eta(\theta))$ asymptotically 
obeys the Gaussian distribution with the variance 
$\frac{d^2 \phi}{d\theta^2}(\theta)$.

Next, for the family $\{W^{\times n}_\theta \times P_\theta\}_{\theta}$, 
we consider the Fisher information $J_\theta^n$ for the natural parameter $\theta$
and the Fisher information $\tilde{J}_\theta^n$ for the expectation parameter $\eta$.
\begin{lemma}\Label{L29-1}
The limit of the Fisher information $J_\theta^n$ 
for the natural parameter $\theta$
is characterized as
\begin{align}
\lim_{n\to \infty}\frac{J_\theta^n}{n}=
\frac{d^2 \phi}{d\theta^2}(\theta).
\Label{29-1}
\end{align}
Hence, the limit of the Fisher information $\tilde{J}_\theta^n$ 
for the expectation parameter $\eta$ is characterized as
$\lim_{n\to \infty}\frac{\tilde{J}_\theta^n}{n}=
\frac{d^2 \phi}{d\theta^2}(\theta)^{-1}$.
\end{lemma}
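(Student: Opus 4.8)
The plan is to compute the score function of the family $\{W_\theta^{\times n}\times P_\theta\}_\theta$ explicitly and to recognize $J_\theta^n$ — its variance — as a bounded perturbation of $\mathsf{V}_\theta[g^n(X^{n+1})]$, whose asymptotics were already obtained in \eqref{27-15b}. First I would write
\begin{align*}
\log (W_\theta^{\times n}\times P_\theta)(X^{n+1}) = \sum_{k=1}^n \log W_\theta(X_{k+1}|X_k) + \log P_\theta(X_1)
\end{align*}
and differentiate in $\theta$. The identity $\frac{\partial}{\partial\theta}\log W_\theta(x|x') = -\frac{d\phi}{d\theta}(\theta) + \frac{d}{d\theta}\log\overline{P}^3_{\theta}(x) - \frac{d}{d\theta}\log\overline{P}^3_{\theta}(x') + g(x,x')$, which is \eqref{eq:taking-derivative-1} and depends only on the matrix $W_\theta$ (hence holds in the non-stationary setting as well), makes the contributions of $\frac{d}{d\theta}\log\overline{P}^3_{\theta}$ telescope along $k=1,\ldots,n$, so the score equals $\big(g^n(X^{n+1}) - n\frac{d\phi}{d\theta}(\theta)\big) + R_n$ with $R_n := \frac{d}{d\theta}\log\overline{P}^3_{\theta}(X_{n+1}) - \frac{d}{d\theta}\log\overline{P}^3_{\theta}(X_1) + \frac{\partial}{\partial\theta}\log P_\theta(X_1)$.

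Since the score of a regular family on a finite alphabet has mean zero, $J_\theta^n = \mathsf{V}_\theta[g^n(X^{n+1}) + R_n]$. The remainder $R_n$ depends only on $(X_1,X_{n+1})$ and is built from $\theta$-derivatives of logarithms of the strictly positive vector $\overline{P}^3_{\theta}$ and of $P_\theta$ on the finite set $\cX$; hence $\|R_n\|_\infty$, and therefore $\mathsf{V}_\theta[R_n]$, is bounded by a constant independent of $n$. Applying the triangle inequality for the $L^2(\mathsf{E}_\theta)$-seminorm $\sqrt{\mathsf{V}_\theta[\,\cdot\,]}$ exactly as in \eqref{11-11-2}, I would sandwich
\begin{align*}
\Big(\sqrt{\tfrac{\mathsf{V}_\theta[g^n(X^{n+1})]}{n}} - \sqrt{\tfrac{\mathsf{V}_\theta[R_n]}{n}}\,\Big)^2 \le \frac{J_\theta^n}{n} \le \Big(\sqrt{\tfrac{\mathsf{V}_\theta[g^n(X^{n+1})]}{n}} + \sqrt{\tfrac{\mathsf{V}_\theta[R_n]}{n}}\,\Big)^2 .
\end{align*}

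By \eqref{27-15b} we have $\mathsf{V}_\theta[g^n(X^{n+1})]/n \to \frac{d^2\phi}{d\theta^2}(\theta)$, while $\mathsf{V}_\theta[R_n]/n \to 0$, so for $n$ large both bounds are defined and converge to $\frac{d^2\phi}{d\theta^2}(\theta)$, which is \eqref{29-1}. For the expectation parameter, $\eta$ is a reparametrization with $\frac{d \eta(\theta)}{d \theta}=\frac{d^2\phi}{d\theta^2}(\theta)$, which is strictly positive by Lemma \ref{L1} because $\{W_\theta\}$ is a genuine one-parameter exponential family; hence the change-of-variables rule $\tilde J_\theta^n = J_\theta^n / (\frac{d \eta(\theta)}{d \theta})^2$ gives $\lim_{n\to\infty}\tilde J_\theta^n/n = \frac{d^2\phi}{d\theta^2}(\theta)/(\frac{d^2\phi}{d\theta^2}(\theta))^2 = \frac{d^2\phi}{d\theta^2}(\theta)^{-1}$.

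The only slightly delicate points are the zero-mean property of the score together with legitimacy of differentiating under $\mathsf{E}_\theta$ — routine on a finite state space with a regular parametrization — and the uniform-in-$n$ boundedness of $R_n$, for which ergodicity is not needed but smoothness and positivity of $\overline{P}^3_{\theta}$ and $P_\theta$ are. An alternative would be to start from $J_\theta^n = \lim_{t\to 0}\frac{2}{t^2}D(W_\theta^{\times n}\times P_\theta \| W_{\theta+t}^{\times n}\times P_{\theta+t})$ and combine \eqref{1-4-1} with Lemma \ref{L20}; but that requires interchanging $\lim_{n\to\infty}$ and $\lim_{t\to 0}$, exactly the uniformity issue the direct score computation sidesteps, so I would prefer the argument above.
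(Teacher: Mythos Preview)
Your proof is correct and follows essentially the same route as the paper: compute the score by telescoping \eqref{eq:taking-derivative-1}, recognize $J_\theta^n$ as the $L^2$-square of $g^n(X^{n+1})-n\frac{d\phi}{d\theta}(\theta)$ plus a remainder bounded uniformly in $n$, apply the triangle inequality, and invoke \eqref{27-15b}. The only cosmetic difference is that the paper first splits off the initial-distribution contribution $J_\theta^1$ orthogonally (as in \eqref{25-2}) before bounding the rest, whereas you absorb $\frac{\partial}{\partial\theta}\log P_\theta(X_1)$ into the bounded remainder $R_n$; your version avoids the cross-term cancellation and is marginally cleaner.
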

Lemma \ref{L29-1} implies that
the lower bound of the Cram\'{e}r-Rao inequality is 
$\frac{d^2 \phi}{d\theta^2}(\theta(\eta)) \frac{1}{n}+
o(\frac{1}{n})$.
Therefore, 
the estimator $\frac{\tilde{g}^n(X^{n+1})}{n}$ attains the lower bound by the Cram\'{e}r-Rao inequality
with the order $\frac{1}{n}$.
That is, the estimator $\frac{\tilde{g}^n(X^{n+1})}{n}$ is asymptotically efficient.

\begin{proofof}{Lemma \ref{L29-1}}
Similar to (\ref{25-2}), we have
\begin{align}
\nonumber J_\theta^n
= &
\mathsf{E}_\theta
[(- n \frac{d \phi}{d \theta}(\theta)
+ \frac{d}{d\theta} \log \overline{P}^3_{\theta}(X_{n+1})
- \frac{d}{d\theta} \log \overline{P}^3_{\theta}(X_1)
+ g^n(X^{n+1}) )^2]
+ J_{\theta}^1   \\
=& 
\| - n \frac{d \phi}{d \theta}(\theta)
+ \frac{d}{d\theta} \log \overline{P}^3_{\theta}(X_{n+1})
- \frac{d}{d\theta} \log \overline{P}^3_{\theta}(X_1)
+ g^n(X^{n+1}) \|_2^2
+ 
J_{\theta}^1 .
\Label{29-2}
\end{align}
Since (\ref{27-15}) and (\ref{27-15b}) yield that
$\frac{1}{n} \| g^n(X^{n+1}) - n \frac{d \phi}{d \theta}(\theta) \|_2^2
\to \frac{d^2 \phi}{d \theta^2}(\theta)$,
we have
\begin{align}
\nonumber
& \frac{1}{\sqrt{n}}\| - n \frac{d \phi}{d \theta}(\theta)
+ \frac{d}{d\theta} \log \overline{P}^3_{\theta}(X_{n+1})
- \frac{d}{d\theta} \log \overline{P}^3_{\theta}(X_1)
+ g^n(X^{n+1}) \|_2  \\
\le &
\frac{1}{\sqrt{n}}( \| g^n(X^{n+1}) - n \frac{d \phi}{d \theta}(\theta) \|_2 
+\|\frac{d}{d\theta} \log \overline{P}^3_{\theta}(X_{n+1})\|_2
+\| \frac{d}{d\theta} \log \overline{P}^3_{\theta}(X_1) \|_2) \nonumber \\
\le &
\frac{1}{\sqrt{n}} \| g^n(X^{n+1}) - n \frac{d \phi}{d \theta}(\theta) \|_2 
+\frac{2}{\sqrt{n}} \max_{x}|\frac{d}{d\theta} \log \overline{P}^3_{\theta}(x)| 
\to  \sqrt{\frac{d^2 \phi}{d \theta^2}(\theta)}.
\Label{29-3}
\end{align}
The combination of (\ref{29-2}) and (\ref{29-3}) yields that 
$\lim_{n\to \infty}\frac{J_\theta^n}{n} \le \frac{d^2 \phi}{d\theta^2}(\theta)$.
Similarly, the opposite inequality can be shown by replacing the role of
(\ref{29-3}) by the following inequality. 
\begin{align*}
& \frac{1}{\sqrt{n}}\| - n \frac{d \phi}{d \theta}(\theta)
+ \frac{d}{d\theta} \log \overline{P}^3_{\theta}(X_{n+1})
- \frac{d}{d\theta} \log \overline{P}^3_{\theta}(X_1)
+ g^n(X^{n+1}) \|_2 \nonumber \\
\ge &
\frac{1}{\sqrt{n}} \| g^n(X^{n+1}) - n \frac{d \phi}{d \theta}(\theta) \|_2 
-\frac{2}{\sqrt{n}} \max_{x}|\frac{d}{d\theta} \log \overline{P}^3_{\theta}(x)| 
\to \sqrt{\frac{d^2 \phi}{d \theta^2}(\theta)}.
\end{align*}
Hence, we obtain (\ref{29-1}).
Since $\frac{d \theta}{d \eta}(\theta)=\frac{d^2 \phi}{d\theta^2}(\theta)^{-1}$,
(\ref{29-1}) implies $\lim_{n\to \infty}\frac{\tilde{J}_\theta^n}{n}=
\frac{d^2 \phi}{d\theta^2}(\theta)^{-1}$.
\end{proofof}

\section{Estimation with multi-parameter case}\Label{s8}
\subsection{Estimation with multi-parameter exponential family: stationary case}\Label{s8-1}
Assume that $W_{\vec{\theta}}$ is a multi-parameter exponential family of transition matrices with $\vec{\theta}=(\theta^1, \ldots,\theta^d)$ with the generator $\{g_j\}$.
Then,
we assume that 
the initial distribution is the stationary distribution
$\overline{P}^1_{\vec{\theta}}$ on $\cX$ of $W_{\vec{\theta}}$
and the random variable $X^{n+1}:= (X_{n+1}, \ldots,X_1)$
is subject to 
$W_{\vec{\theta}}^{\times n} \times \overline{P}^1_{\vec{\theta}}$ with the unknown parameter ${\vec{\theta}}$.
In this subsection, we denote the expectation and the variance under the distribution $W_{\vec{\theta}}^{\times n} \times \overline{P}^1_{\vec{\theta}}$
by $\mathsf{E}_{\vec{\theta}}$ and $\mathsf{V}_{\vec{\theta}}$.

Similar to (\ref{25-7}),
using $\vec{g}^n(X^{n+1}):= [g_j^n(X^{n+1})]_{j}$,
we can show that
\begin{align}
\mathsf{E}_{\vec{\theta}} [\frac{\vec{g}^n(X^{n+1})}{n}] 
=\vec{\eta}(\vec{\theta}),
\Label{25-7c}
\end{align}
which implies that $\vec{g}^n(X^{n+1})$ is an unbiased estimator of the 
expectation parameter $\vec{\eta}(\vec{\theta})$.
We denote the covariance matrix of $\vec{g}^n(X^{n+1})$
by $\mathsf{Cov}_\theta [\vec{g}^n(X^{n+1})]$.
We also denote the covariance matrix of $[
\frac{\partial}{\partial\theta^j} \log \overline{P}^3_{\vec{\theta}}(X)]_j$
by $\hat{\mathsf{Cov}}_\theta$.
\begin{lemma}\Label{L5-2}
The matrix inequalities
\begin{align}
\nonumber 
& n \mathsf{H}_{\vec{\theta}} [\phi]
(1-2\sqrt{\frac{\|
\mathsf{H}_{\vec{\theta}} [\phi]^{-\frac{1}{2}}
\hat{\mathsf{Cov}}_\theta  
\mathsf{H}_{\vec{\theta}} [\phi]^{-\frac{1}{2}}\|
}{ n }})^2
\le \mathsf{Cov}_\theta [\vec{g}^n(X^{n+1})] \\
 \le &
n \mathsf{H}_{\vec{\theta}} [\phi]
(1+2\sqrt{\frac{\|
\mathsf{H}_{\vec{\theta}} [\phi]^{-\frac{1}{2}}
\hat{\mathsf{Cov}}_\theta  
\mathsf{H}_{\vec{\theta}} [\phi]^{-\frac{1}{2}}\|
}{ n }})^2 
\Label{25-5c}
\end{align}
hold,
where the matrix inequality is defined by the positive semi-definiteness.
\end{lemma}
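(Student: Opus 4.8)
The plan is to reduce the matrix inequality \eqref{25-5c} to the scalar estimate \eqref{25-5} in Lemma \ref{L20b} by testing against an arbitrary vector $\vec{c}=(c_1,\ldots,c_d)$. The point is that both sides of a positive-semidefinite ordering are completely determined by the quadratic forms they induce. So first I would fix $\vec{c}$ and form the scalar random variable $Z_n:=\sum_j c_j g_j^n(X^{n+1})$, which is nothing but $g^n(X^{n+1})$ for the one-dimensional exponential family with generator $g:=\sum_j c_j g_j$ and natural parameter running along the line $t\mapsto \vec{\theta}+\vec{c}\,t$. For that line, the potential function is $t\mapsto \phi(\vec{\theta}+\vec{c}\,t)$, whose second derivative at $t=0$ is the quadratic form $\sum_{i,j}\mathsf{H}_{\vec{\theta}}[\phi]_{i,j}c^i c^j = \vec{c}^{\,T}\mathsf{H}_{\vec{\theta}}[\phi]\vec{c}$. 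Likewise $\vec{c}^{\,T}\mathsf{Cov}_\theta[\vec{g}^n(X^{n+1})]\vec{c} = \mathsf{V}_\theta[Z_n]$. This identifies all the scalar ingredients of \eqref{25-5} with the $\vec{c}$-quadratic forms of the matrices in \eqref{25-5c}, except for the correction term, which I treat next.

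The second step is to handle $\hat{\mathsf{V}}_\theta$ for the line. Along $t\mapsto\vec{\theta}+\vec{c}\,t$, the Perron--Frobenius left eigenvector is $\overline{P}^3_{\vec{\theta}+\vec{c}\,t}$, so its logarithmic derivative at $t=0$ is $\sum_j c_j \frac{\partial}{\partial\theta^j}\log\overline{P}^3_{\vec{\theta}}(X)$, and therefore the scalar $\hat{\mathsf{V}}_\theta$ appearing in Lemma \ref{L20b} for this line equals $\vec{c}^{\,T}\hat{\mathsf{Cov}}_\theta\,\vec{c}$. Plugging into \eqref{25-5} gives, for every $\vec{c}$,
\begin{align*}
n\,\vec{c}^{\,T}\mathsf{H}_{\vec{\theta}}[\phi]\vec{c}\,
\Bigl(1-2\sqrt{\tfrac{\vec{c}^{\,T}\hat{\mathsf{Cov}}_\theta\vec{c}}{n\,\vec{c}^{\,T}\mathsf{H}_{\vec{\theta}}[\phi]\vec{c}}}\Bigr)^2
\le \vec{c}^{\,T}\mathsf{Cov}_\theta[\vec{g}^n(X^{n+1})]\vec{c}
\le n\,\vec{c}^{\,T}\mathsf{H}_{\vec{\theta}}[\phi]\vec{c}\,
\Bigl(1+2\sqrt{\tfrac{\vec{c}^{\,T}\hat{\mathsf{Cov}}_\theta\vec{c}}{n\,\vec{c}^{\,T}\mathsf{H}_{\vec{\theta}}[\phi]\vec{c}}}\Bigr)^2.
\end{align*}
Expanding the squares, the content of these bounds is the pair of two-sided estimates on $\big|\sqrt{\vec{c}^{\,T}\mathsf{Cov}_\theta\vec{c}}-\sqrt{n\,\vec{c}^{\,T}\mathsf{H}_{\vec{\theta}}[\phi]\vec{c}}\big|\le 2\sqrt{\vec{c}^{\,T}\hat{\mathsf{Cov}}_\theta\vec{c}}$, exactly as in the proof of Lemma \ref{L20b} via the triangle inequality for the $2$-norm. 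In fact, the cleanest route is to repeat that triangle-inequality argument directly in the multi-parameter setting: writing $A^{1/2}$ for the positive square root of $A:=\mathsf{H}_{\vec{\theta}}[\phi]$ (positive definite by Lemma \ref{L1-14-2}), one has $\sqrt n A^{1/2} = \mathsf{Cov}_\theta[\vec{g}^n]^{1/2}U + E$ for some error whose operator norm is controlled by $2\|\hat{\mathsf{Cov}}_\theta^{1/2}\|$, which after conjugating by $A^{-1/2}$ gives the stated form.

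The last step is to go from the $\vec{c}$-quadratic-form inequalities back to the matrix inequality \eqref{25-5c}. This requires rewriting the scalar correction $2\sqrt{\vec{c}^{\,T}\hat{\mathsf{Cov}}_\theta\vec{c}}/\sqrt{n}$ in terms of $\|\mathsf{H}_{\vec{\theta}}[\phi]^{-1/2}\hat{\mathsf{Cov}}_\theta\mathsf{H}_{\vec{\theta}}[\phi]^{-1/2}\|$: since $\vec{c}^{\,T}\hat{\mathsf{Cov}}_\theta\vec{c}\le \|\mathsf{H}_{\vec{\theta}}[\phi]^{-1/2}\hat{\mathsf{Cov}}_\theta\mathsf{H}_{\vec{\theta}}[\phi]^{-1/2}\|\,\vec{c}^{\,T}\mathsf{H}_{\vec{\theta}}[\phi]\vec{c}$, replacing the $\vec{c}$-dependent ratio by its supremum only weakens the lower bound and strengthens the upper bound in the direction we want, so the scalar inequalities with the worst-case constant hold for all $\vec{c}$ simultaneously; a Loewner-monotone functional-calculus argument then packages them as \eqref{25-5c}. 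I expect the main obstacle to be precisely this last packaging: one must check that the map $x\mapsto x(1\pm2\sqrt{c/x})^2$ applied in the Loewner order behaves correctly when the ``$x$'' is the matrix $nA$ and the ``$c$'' is replaced by the scalar operator-norm bound — this is not automatic because operator-monotonicity of $\sqrt{\cdot}$ is needed and the square is not operator-monotone, so the argument should be carried out at the level of the linearized inequality $\|\sqrt{\mathsf{Cov}_\theta[\vec g^n]}-\sqrt{nA}\|\le 2\|\hat{\mathsf{Cov}}_\theta^{1/2}\|$ (in the appropriate conjugated norm) rather than on the squared form directly, and only squared at the very end where both sides are positive semidefinite and simultaneously diagonalizable with $A$.
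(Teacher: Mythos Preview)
Your approach is essentially the paper's: fix a direction $\vec{c}$, apply the one-parameter bound \eqref{25-5} to the generator $\sum_j c_j g_j$, observe that the scalar $\hat{\mathsf{V}}_\theta$ along the line is $\vec{c}^{\,T}\hat{\mathsf{Cov}}_\theta\,\vec{c}$, and then replace the direction-dependent ratio $\vec{c}^{\,T}\hat{\mathsf{Cov}}_\theta\vec{c}\big/\vec{c}^{\,T}\mathsf{H}_{\vec{\theta}}[\phi]\vec{c}$ by its supremum $\|\mathsf{H}_{\vec{\theta}}[\phi]^{-1/2}\hat{\mathsf{Cov}}_\theta\mathsf{H}_{\vec{\theta}}[\phi]^{-1/2}\|$.

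Your worry about the ``packaging'' step is misplaced, and the operator-square-root detour you sketch is unnecessary. Once the ratio has been replaced by the scalar $\kappa:=\|\mathsf{H}_{\vec{\theta}}[\phi]^{-1/2}\hat{\mathsf{Cov}}_\theta\mathsf{H}_{\vec{\theta}}[\phi]^{-1/2}\|$, the two bounds read
\[
n(1-2\sqrt{\kappa/n})^2\,\vec{c}^{\,T}\mathsf{H}_{\vec{\theta}}[\phi]\vec{c}
\ \le\ \vec{c}^{\,T}\mathsf{Cov}_\theta[\vec{g}^n]\vec{c}
\ \le\ n(1+2\sqrt{\kappa/n})^2\,\vec{c}^{\,T}\mathsf{H}_{\vec{\theta}}[\phi]\vec{c}
\]
for every $\vec{c}$, where the outer factors $(1\pm2\sqrt{\kappa/n})^2$ are \emph{scalars}, not matrix functions of $n\mathsf{H}_{\vec{\theta}}[\phi]$. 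Hence the matrix inequality \eqref{25-5c} is nothing more than the definition of the Loewner order: $M_1\le M_2$ means $\vec{c}^{\,T}M_1\vec{c}\le\vec{c}^{\,T}M_2\vec{c}$ for all $\vec{c}$. No functional calculus, no operator monotonicity of $\sqrt{\cdot}$, and no simultaneous diagonalization are involved. The paper's proof is exactly this two-line argument; you should drop the last paragraph of your proposal.
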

\begin{proof}
First, we fix a real unit vector $\vec{a}=[a_j]_j$.
Applying (\ref{25-5}) to the random variable $\sum_j a_j g_j^n(X^{n+1})$,
we obtain
\begin{align}
\nonumber 
 n \vec{a}^T \mathsf{H}_{\vec{\theta}} [\phi] \vec{a}
(1-2\sqrt{\frac{
\vec{a}^T \hat{\mathsf{Cov}}_\theta  \vec{a}
}{ n
\vec{a}^T \mathsf{H}_{\vec{\theta}} [\phi]\vec{a} }})^2
\le & \vec{a}^T \mathsf{Cov}_\theta [\vec{g}^n(X^{n+1})]\vec{a} \\
 \le &
n \vec{a}^T \mathsf{H}_{\vec{\theta}} [\phi] \vec{a}
(1+2\sqrt{\frac{
\vec{a}^T \hat{\mathsf{Cov}}_\theta  \vec{a}
}{ n
\vec{a}^T \mathsf{H}_{\vec{\theta}} [\phi]\vec{a} }})^2.
\Label{5-11}
\end{align}
Since 
$\frac{
\vec{a}^T \hat{\mathsf{Cov}}_\theta  \vec{a}
}{\vec{a}^T \mathsf{H}_{\vec{\theta}} [\phi]\vec{a} }
\le 
\|
\mathsf{H}_{\vec{\theta}} [\phi]^{-\frac{1}{2}}
\hat{\mathsf{Cov}}_\theta  
\mathsf{H}_{\vec{\theta}} [\phi]^{-\frac{1}{2}}\|$,
(\ref{5-11}) implies (\ref{25-5c}).
\end{proof}

Lemma \ref{L5-2} yields that
\begin{align}
\mathsf{Cov}_{\vec{\theta}} [\frac{\vec{g}^n(X^{n+1})}{n}] 
=
\frac{\mathsf{Cov}_{\vec{\theta}} [\vec{g}^n(X^{n+1})] }{n^2}
= \frac{ \mathsf{H}_{\vec{\theta}}[\phi]}{n} +o(\frac{1}{n}).
\Label{25-8c}
\end{align}

Now, we denote the Fisher information matrix of 
the distribution family $\{ \overline{P}^1_{\vec{\theta}}\}_{\vec{\theta}}$
by $J_{\vec{\theta}}^1$.
The Fisher information matrix $\tilde{J}^{n+1}_{\vec{\eta}(\vec{\theta})}$
for the expectation parameter $\vec{\eta}$ 
of the distribution family $\{ W_{\vec{\theta}}^{\times n} \times \overline{P}^1_{\vec{\theta}}\}_{\vec{\theta}}$ is 
\begin{align*}
& \tilde{J}^{n+1}_{\vec{\eta}(\vec{\theta})}=
([\frac{\partial \eta_i(\vec{\theta})}{\partial \theta_j}]_{i,j}^T)^{-1}
{J}^{n+1}_{\vec{\theta}} 
([\frac{\partial \eta_i(\vec{\theta})}{\partial \theta_j}]_{i,j})^{-1}
=
\mathsf{H}_{\vec{\theta}}[\phi]^{-1}
(n 
\mathsf{H}_{\vec{\theta}}[\phi]
+ J_{\vec{\theta}}^1)
\mathsf{H}_{\vec{\theta}}[\phi]^{-1} \\
=&
\mathsf{H}_{\vec{\theta}}[\phi]^{-\frac{1}{2}}
(n 
I+ 
\mathsf{H}_{\vec{\theta}}[\phi]^{-\frac{1}{2}}
J_{\vec{\theta}}^1
\mathsf{H}_{\vec{\theta}}[\phi]^{-\frac{1}{2}}
)
\mathsf{H}_{\vec{\theta}}[\phi]^{-\frac{1}{2}}.
\end{align*}
That is,
the lower bound of the variance of the unbiased estimator 
given by Cram\'{e}r-Rao inequality is
$
\frac{1}{n}
\mathsf{H}_{\vec{\theta}}[\phi]^{\frac{1}{2}}
(1 
I+ \frac{1}{n}
\mathsf{H}_{\vec{\theta}}[\phi]^{-\frac{1}{2}}
J_{\vec{\theta}}^1
\mathsf{H}_{\vec{\theta}}[\phi]^{-\frac{1}{2}}
)^{-1}
\mathsf{H}_{\vec{\theta}}[\phi]^{\frac{1}{2}}$,
i.e., the Cram\'{e}r-Rao inequality is given as
\begin{align}
\nonumber
\mathsf{Cov}_{\vec{\theta}} [\frac{\vec{g}^n(X^{n+1})}{n}] 
\ge &
\frac{1}{n}
\mathsf{H}_{\vec{\theta}}[\phi]^{\frac{1}{2}}
(I+ \frac{1}{n}
\mathsf{H}_{\vec{\theta}}[\phi]^{-\frac{1}{2}}
J_{\vec{\theta}}^1
\mathsf{H}_{\vec{\theta}}[\phi]^{-\frac{1}{2}}
)^{-1}
\mathsf{H}_{\vec{\theta}}[\phi]^{\frac{1}{2}} \\
=&
\frac{1}{n}
\mathsf{H}_{\vec{\theta}}[\phi]+
O(\frac{1}{n^2}). 
\Label{25-6c}
\end{align}
The relation (\ref{25-8c}) shows that 
the unbiased estimator $\frac{\vec{g}^n(X^{n+1})}{n}$ realizes the optimal performance with the order $\frac{1}{n}$.

Therefore, we obtain an asymptotically efficient estimator for the expectation parameter.
To estimate the natural parameter, 
we need to solve the equation
\begin{align}
\eta_j= \frac{\partial \phi}{\partial \theta^j}(\vec{\theta})
\end{align}
for $\vec{\theta}$.
Since the function $\phi(\vec{\theta})$ is strictly convex,
$\vec{\theta}(\vec{\eta})$ can be derived by 
the maximization of the concave function as
\begin{align}
\argmax_{\vec{\theta}}
\vec{\eta}\cdot\vec{\theta}-\phi (\vec{\theta}).
\end{align}
The calculation complexity does not depend on the number $n$ of data. 
Hence, when the number $d$ of parameters is not so large, the natural parameter can be estimated efficiently even with a large number $n$ of data.

However, the conventional algorithm for the maximization of the concave function \cite{BV} requires the calculation of the derivative.
Since the convex function $\phi(\vec{\theta})$
is given as the logarithm of the Perron-Frobenius 
eigenvalue of the matrix $\overline{W}_\theta$,
the calculation of the derivative is not so easy.
To overcome this kind of difficulty, 
we can employ derivative-free optimization algorithms \cite{CSV,MKK} represented by Nelder-Mead method \cite{NM}.
A derivative-free optimization algorithm
maximizes a concave function 
without calculating the derivative only with calculating the outcomes with several inputs.
In particular, it is expected that such an algorithm enables us to numerically derive $\vec{\theta}(\vec{\eta})$ for a given $\vec{\eta}$.

\subsection{Estimation with multi-parameter exponential family: non-stationary case}\Label{s8-2}
Next, similar to Section \ref{s7},
we consider the non-stationary case and assume that the transition matrices $W_{\vec{\theta}}$ are ergodic as well as irreducible.
Then, we fix an arbitrary initial distributions $P_{\vec{\theta}}$ on $\cX$ such that
the distribution $P_{\vec{\theta}}$ is smoothly parameterized by the natural parameter $\vec{\theta}$.
This assumption contains 
the special case when the distribution $P_{\vec{\theta}}$ does not depend on the parameter $\vec{\theta}$.

In this subsection, we denote the expectation, the variance, and the covariance matrix under the distribution $W_{\vec{\theta}}^{\times n} \times P_{\vec{\theta}}$
by $\mathsf{E}_{\vec{\theta}}$, 
$\mathsf{V}_{\vec{\theta}}$, and $\mathsf{Cov}_{\vec{\theta}}$.
Then, we employ the random variable $\vec{g}^n(X^{n+1}):=({g}_j^n(X^{n+1}))$.
When we use the random variable $\vec{g}^n (X^{n+1})/n$ as an estimator of the parameter $\vec{\theta}$, the error is measured by
the mean square error matrix:
\begin{align*}
\mathsf{MSE}_\theta [\frac{\vec{g}^n (X^{n+1})}{n}]_{i,j}:=
\mathsf{E}_\theta [(\frac{{g}_i^n (X^{n+1})}{n}-\eta_i(\vec{\theta}) )
(\frac{{g}_j^n (X^{n+1})}{n}-\eta_j(\vec{\theta})) ].
\end{align*}
Similar to (\ref{27-15}), we can show that
\begin{align}
\lim_{n\to \infty} \mathsf{E}_{\vec{\theta}} [\frac{\vec{g}^n(X^{n+1})}{n}] 
=&
\vec{\eta}(\vec{\theta}) =
[\frac{ \partial \phi}{\partial  \theta^j}(\vec{\theta})]_j 
\Label{27-15c}.
\end{align}
For any vector $\vec{c}=(c_i)$, 
the application of (\ref{27-15b}) to $\theta= \vec{c} \cdot \vec{\theta}$
implies that
\begin{align*}
\lim_{n\to \infty} 
n 
\vec{c}^T
\mathsf{MSE}_{\vec{\theta}} [\frac{\vec{g}^n(X^{n+1})}{n}]
\vec{c}
= 
\lim_{n\to \infty} 
n 
\vec{c}^T
\mathsf{Cov}_{\vec{\theta}} [\frac{\vec{g}^n(X^{n+1})}{n}] 
\vec{c}
= 
\vec{c}^T
\mathsf{H}_{\vec{\theta}} [\phi]
\vec{c},
\end{align*}
which implies the following theorem.
\begin{theorem}
\begin{align}
\lim_{n\to \infty} 
n 
\mathsf{MSE}_{\vec{\theta}} [\frac{\vec{g}^n(X^{n+1})}{n}]
= 
\lim_{n\to \infty} 
n 
\mathsf{Cov}_{\vec{\theta}} [\frac{\vec{g}^n(X^{n+1})}{n}] 
= 
\mathsf{H}_{\vec{\theta}} [\phi].
\Label{27-15bc}
\end{align}
\end{theorem}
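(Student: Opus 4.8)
The plan is to deduce the matrix identity \eqref{27-15bc} from the one-parameter result \eqref{27-15b} by contracting with an arbitrary direction vector and restricting the exponential family to the corresponding line through $\vec{\theta}$. Since $\mathsf{MSE}_{\vec{\theta}}$ and $\mathsf{Cov}_{\vec{\theta}}$ are symmetric matrices, and a symmetric matrix is recovered from its quadratic form by polarization (letting $\vec{c}$ run over the coordinate vectors and their pairwise sums), it suffices to prove, for every $\vec{c}=(c_1,\ldots,c_d)$, that $\lim_{n\to\infty} n\,\vec{c}^T\mathsf{MSE}_{\vec{\theta}}[\frac{\vec{g}^n(X^{n+1})}{n}]\vec{c} = \lim_{n\to\infty} n\,\vec{c}^T\mathsf{Cov}_{\vec{\theta}}[\frac{\vec{g}^n(X^{n+1})}{n}]\vec{c} = \vec{c}^T\mathsf{H}_{\vec{\theta}}[\phi]\,\vec{c}$. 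Existence of the matrix limits then follows automatically, each entry being a fixed linear combination of such quadratic forms.

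First I would verify that the line $\{W_{\vec{\theta}+t\vec{c}}\}_{t\in\mathbb{R}}$ is itself a one-parameter exponential family of transition matrices to which Section \ref{s7} applies. From \eqref{5-6} one has $\overline{W}_{\vec{\theta}+t\vec{c}}(x|x')=\overline{W}_{\vec{\theta}}(x|x')\,e^{t\sum_j c_j g_j(x,x')}$, and after the Perron--Frobenius normalization \eqref{12-26-1} this shows that $\{W_{\vec{\theta}+t\vec{c}}\}_t$ is exactly the exponential family generated by the transition matrix $W_{\vec{\theta}}$ with the single generator $g_{\vec{c}}:=\sum_{j=1}^d c_j g_j$; for $\vec{c}\neq 0$ the linear independence of $\{g_j\}$ makes $g_{\vec{c}}$ a legitimate generator satisfying the condition of Lemma \ref{L1}, and for $\vec{c}=0$ the claim is trivial. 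Its potential function is $t\mapsto\phi(\vec{\theta}+t\vec{c})$ up to an additive constant, with second derivative at $t=0$ equal to $\sum_{i,j}c_i c_j\frac{\partial^2\phi}{\partial\theta^i\partial\theta^j}(\vec{\theta})=\vec{c}^T\mathsf{H}_{\vec{\theta}}[\phi]\vec{c}$, and $t\mapsto P_{\vec{\theta}+t\vec{c}}$ is a smooth family of (possibly non-stationary) initial distributions, which is precisely the setting of Section \ref{s7}.

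Next I would apply \eqref{27-15b} to this one-parameter family at $t=0$, with the auxiliary function $h$ taken to be $0$ so that $\tilde{g}^n=g^n$, and note that the statistic attached to the generator $g_{\vec{c}}$ is $g_{\vec{c}}^n(X^{n+1})=\sum_{i=1}^n g_{\vec{c}}(X_{i+1},X_i)=\vec{c}\cdot\vec{g}^n(X^{n+1})$. This yields $\lim_{n\to\infty}n\,\mathsf{MSE}_{\vec{\theta}}[\vec{c}\cdot\vec{g}^n(X^{n+1})/n]=\lim_{n\to\infty}\mathsf{V}_{\vec{\theta}}[\vec{c}\cdot\vec{g}^n(X^{n+1})/\sqrt{n}]=\vec{c}^T\mathsf{H}_{\vec{\theta}}[\phi]\vec{c}$, where the first equality (which in Section \ref{s7} passes through \eqref{11-11-3}) simultaneously encodes that $n$ times the squared bias of $\vec{c}\cdot\vec{g}^n(X^{n+1})/n$ tends to $0$, so the covariance quadratic form has the same limit. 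Combining this with the polarization argument of the first paragraph gives \eqref{27-15bc}.

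My honest assessment is that there is no real obstacle here: all the analytic content --- convergence of the expectation and of the rescaled variance to their stationary values, and the identification of the limiting variance with $\frac{d^2\phi}{d\theta^2}$ --- already resides in \eqref{27-15}, \eqref{27-15b}, and Lemma \ref{L29-1}. The only steps requiring care are the two pieces of bookkeeping above: checking that restricting to the line $\vec{\theta}+t\vec{c}$ produces a genuine one-parameter exponential family with potential $\phi(\vec{\theta}+t\vec{c})$ and a smoothly varying initial distribution, so that Section \ref{s7} is applicable, and the passage from the family of scalar quadratic forms back to the full symmetric matrices.
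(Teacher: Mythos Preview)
Your proposal is correct and follows essentially the same route as the paper: reduce to quadratic forms in an arbitrary direction $\vec{c}$, apply the one-parameter result \eqref{27-15b} to the line $t\mapsto W_{\vec{\theta}+t\vec{c}}$, and recover the matrix identity. The paper's argument is just the terse version of yours---it writes only ``the application of \eqref{27-15b} to $\theta=\vec{c}\cdot\vec{\theta}$''---whereas you spell out the polarization step and the verification that the restricted family is a genuine one-parameter exponential family with smooth initial data.
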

The relation (\ref{27-15c}) shows that the estimator $\frac{\vec{g}^n(X^{n+1})}{n}$
is asymptotically unbiased for the expectation parameter $\vec{\eta}$.
The above theorem implies that the mean square error is
$\frac{1}{n}\mathsf{H}_{\vec{\theta}} [\phi]
 +o(\frac{1}{n})$.

Next, for the family $\{W^{\times n}_{\vec{\theta}} \times P_{\vec{\theta}}\}_{\vec{\theta}}$, 
we consider the Fisher information matrix $J_{\vec{\theta}}^n$ for the natural parameter $\vec{\theta}$
and the Fisher information matrix $\tilde{J}_{\vec{\theta}}^n$ for the expectation parameter $\vec{\eta}$.
\begin{lemma}\Label{L29-1c}
The limit of the Fisher information matrix $J_{\vec{\theta}}^n$ for the natural parameter $\vec{\theta}$
is characterized as
$\lim_{n\to \infty}\frac{J_{\vec{\theta}}^n}{n}=
\mathsf{H}_{\vec{\theta}} [\phi]$.
Hence, the limit of the Fisher information matrix $\tilde{J}_{\vec{\theta}}^n$ 
for the expectation parameter $\vec{\eta}$ is characterized as
$\lim_{n\to \infty}\frac{\tilde{J}_{\vec{\theta}}^n}{n}=
\mathsf{H}_{\vec{\theta}} [\phi]^{-1}$.
\end{lemma}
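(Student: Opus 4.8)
The plan is to deduce this multi-parameter statement from the one-parameter Lemma~\ref{L29-1} by the same contraction device used to pass from the one-parameter estimate \eqref{27-15b} to the covariance identity \eqref{27-15bc}. Fix a nonzero vector $\vec{c}=(c^1,\ldots,c^d)\in\mathbb{R}^d$ and restrict the exponential family to the line $t\mapsto W_{\vec{\theta}+t\vec{c}}$. This restriction is a one-parameter exponential family of transition matrices generated by $W_{\vec{\theta}}$ with the single generator $g_{\vec{c}}:=\sum_{j}c^j g_j$ (that it coincides with the slice uses that the Perron normalization \eqref{12-26-1} is invariant under diagonal conjugation and scalar rescaling of $\overline{W}$, exactly as in the proof of Lemma~\ref{L1-14-2}); its potential function is $t\mapsto\phi(\vec{\theta}+t\vec{c})-\phi(\vec{\theta})$, so the second derivative of its potential at $t=0$ equals $\sum_{i,j}\mathsf{H}_{\vec{\theta}}[\phi]_{i,j}c^ic^j=\vec{c}^{\,T}\mathsf{H}_{\vec{\theta}}[\phi]\vec{c}$. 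Since the generator $\{g_j\}$ is linearly independent, Lemma~\ref{L1-14-2} gives that $\mathsf{H}_{\vec{\theta}}[\phi]$ is strictly positive, so this second derivative is strictly positive and the one-parameter subfamily is nondegenerate; this nondegeneracy is precisely what is needed to invoke Lemma~\ref{L29-1} for it, with initial distribution the smooth family $t\mapsto P_{\vec{\theta}+t\vec{c}}$.

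Next I would use that, under the linear reparametrization $t\mapsto\vec{\theta}+t\vec{c}$, the Fisher information of the $n+1$-observation model along this line, evaluated at $t=0$, is the quadratic form $\vec{c}^{\,T}J_{\vec{\theta}}^n\vec{c}$. Lemma~\ref{L29-1} applied to the line therefore yields $\frac1n\vec{c}^{\,T}J_{\vec{\theta}}^n\vec{c}\to\vec{c}^{\,T}\mathsf{H}_{\vec{\theta}}[\phi]\vec{c}$ for every $\vec{c}$. Because $J_{\vec{\theta}}^n$ and $\mathsf{H}_{\vec{\theta}}[\phi]$ are symmetric matrices and their quadratic forms converge for all $\vec{c}\in\mathbb{R}^d$, the polarization identity upgrades this to entrywise convergence, i.e. $\lim_{n\to\infty}\frac{J_{\vec{\theta}}^n}{n}=\mathsf{H}_{\vec{\theta}}[\phi]$, which is the first assertion.

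For the expectation parameter I would use \eqref{1-3-9}, which gives $\frac{\partial\eta_i}{\partial\theta^j}(\vec{\theta})=\frac{\partial^2\phi}{\partial\theta^i\partial\theta^j}(\vec{\theta})=\mathsf{H}_{\vec{\theta}}[\phi]_{i,j}$; hence the Jacobian of $\vec{\theta}\mapsto\vec{\eta}(\vec{\theta})$ is the symmetric invertible matrix $\mathsf{H}_{\vec{\theta}}[\phi]$. The change-of-variables rule for Fisher information matrices then gives $\tilde{J}_{\vec{\theta}}^n=\mathsf{H}_{\vec{\theta}}[\phi]^{-1}J_{\vec{\theta}}^n\mathsf{H}_{\vec{\theta}}[\phi]^{-1}$, so dividing by $n$ and letting $n\to\infty$ gives $\lim_{n\to\infty}\frac{\tilde{J}_{\vec{\theta}}^n}{n}=\mathsf{H}_{\vec{\theta}}[\phi]^{-1}\mathsf{H}_{\vec{\theta}}[\phi]\mathsf{H}_{\vec{\theta}}[\phi]^{-1}=\mathsf{H}_{\vec{\theta}}[\phi]^{-1}$.

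I expect the main obstacle to be not analytic but a piece of careful bookkeeping: checking that restricting $\{W_{\vec{\theta}}\}$ to the line $\vec{\theta}+t\vec{c}$ really produces a one-parameter exponential family with generator $g_{\vec{c}}$ and the correct potential, that its Fisher information is exactly the contraction $\vec{c}^{\,T}J_{\vec{\theta}}^n\vec{c}$, and that the nondegeneracy condition required by Lemma~\ref{L29-1} is inherited (this is where strict positivity of $\mathsf{H}_{\vec{\theta}}[\phi]$ enters). A fully self-contained alternative would be to repeat the proof of Lemma~\ref{L29-1} in matrix form: write $J_{\vec{\theta}}^n$ as the covariance matrix of the vector score $-n\vec{\eta}(\vec{\theta})+[\frac{\partial}{\partial\theta^j}\log\overline{P}^3_{\vec{\theta}}(X_{n+1})-\frac{\partial}{\partial\theta^j}\log\overline{P}^3_{\vec{\theta}}(X_1)+g_j^n(X^{n+1})]_j$ plus $J_{\vec{\theta}}^1$, bound the two boundary vectors uniformly in $n$ in operator norm, and invoke \eqref{27-15bc} to conclude that after division by $n$ only $\frac1n\mathsf{Cov}_{\vec{\theta}}[\vec{g}^n(X^{n+1})]\to\mathsf{H}_{\vec{\theta}}[\phi]$ survives.
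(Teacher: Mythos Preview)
Your proposal is correct and follows essentially the same route as the paper: fix a direction vector, apply the one-parameter result Lemma~\ref{L29-1} to the restricted line $t\mapsto\vec{\theta}+t\vec{c}$ to obtain convergence of the quadratic forms $\frac{1}{n}\vec{c}^{\,T}J_{\vec{\theta}}^n\vec{c}\to\vec{c}^{\,T}\mathsf{H}_{\vec{\theta}}[\phi]\vec{c}$, and then pass to matrix convergence; the second assertion follows from the Jacobian $[\partial\eta_i/\partial\theta^j]=\mathsf{H}_{\vec{\theta}}[\phi]$. The paper's proof is more terse (it does not spell out the nondegeneracy check or the polarization step), but the argument is the same.
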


\begin{proof}
We fix a real unit vector $\vec{a}=[a_j]_j$.
The application of the relation (\ref{29-1}) 
to $\theta= \vec{c} \cdot \vec{\theta}$
yields that
$\lim_{n\to \infty}\frac{ \vec{a}^T J_{\vec{\theta}}^n \vec{a} }{n}=
\vec{a}^T \mathsf{H}_{\vec{\theta}} [\phi] \vec{a}$,
which implies $\lim_{n\to \infty}\frac{J_{\vec{\theta}}^n}{n}=
\mathsf{H}_{\vec{\theta}} [\phi]$.
Since $[\frac{\partial \eta_i(\vec{\theta})}{\partial \theta_j}]_{i,j}$ is 
$\mathsf{H}_{\vec{\theta}} [\phi]$, we obtain
$\lim_{n\to \infty}\frac{\tilde{J}_{\vec{\theta}}^n}{n}=
\mathsf{H}_{\vec{\theta}} [\phi]^{-1}$.
\end{proof}

Lemma \ref{L29-1c} 
implies that
the lower bound of the Cram\'{e}r-Rao inequality is 
$\frac{1}{n}
\mathsf{H}_{\vec{\theta}} [\phi]
+o(\frac{1}{n})$.
Therefore, 
the estimator $\frac{\vec{g}^n(X^{n+1})}{n}$ attains the lower bound by the Cram\'{e}r-Rao inequality
with the order $\frac{1}{n}$.
That is, the estimator $\frac{\vec{g}^n(X^{n+1})}{n}$ is asymptotically efficient.

Similar to the one-parameter case,
we can show that
the random variable 
$ \sqrt{n}(
\frac{\vec{g}^n(X^{n+1})}{n} - \vec{\eta}(\vec{\theta}))$
converges to
the Gaussian distribution with the covariance matrix
$\mathsf{H}_{\vec{\theta}}[\phi]$.

\subsection{Estimation with multi-parameter curved exponential family}\Label{s8-3}
Next, we proceed to estimation with multi-parameter curved exponential family.
A $d'$-parameter subset 
$\tilde{{\cal E}}=\{W_{\vec{\theta}_{\rm CRV}(\vec{\xi})}\}_{\vec{\xi}}$
of an exponential family ${\cal E}=\{W_{\vec{\theta}}\}_{\vec{\theta}}$
of transition matrices
is called a {\it curved exponential family} of transition matrices.
For example, a mixture family defined in Subsection \ref{s4-5-2} is also a curved exponential family.
As explained in Example \ref{1-3-8},
the set of all positive transition matrices on a finite-size system
forms an exponential family.
Hence, any smooth subfamily of transition matrices on 
a finite-size system forms a curved exponential family.
Then, we define the Fisher information matrix
$\tilde{\mathsf{H}}_{\vec{\xi}}$ as the metric of the submanifold.
Assume that the Jacobian matrix 
$A :=[\frac{\partial \eta_i}{\partial \xi_j}|_{\vec{\xi}=\vec{\xi}_o}]_{i,j}$
has the rank $d'$.
When the potential function of the exponential family is $\phi$,
the Fisher information matrix is written as
$\tilde{\mathsf{H}}_{\vec{\xi}_o}
=
A^T \mathsf{H}_{\vec{\theta}_{\rm CRV}(\vec{\xi}_o)}[\phi]^{-1} A$
because the Fisher information matrix for the expectation parameter $\eta$
at $\vec{\theta}_{\rm CRV}(\vec{\xi}_o)$
is $\mathsf{H}_{\vec{\theta}_{\rm CRV}(\vec{\xi}_o)}^{-1}$.

\begin{figure}[htbp]
\begin{center}
\scalebox{0.7}{\includegraphics[scale=0.5]{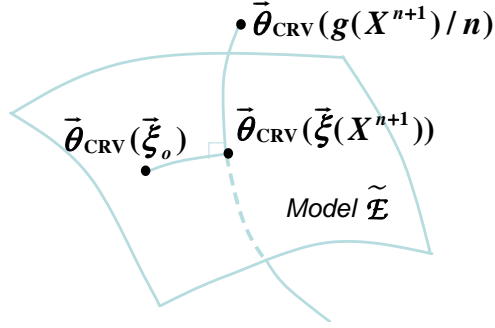}}
\end{center}
\caption{Estimator for the curved exponential family.}
\Label{f1}
\end{figure}%

In the following, we assume that the exponential family ${\cal E}$ is generated by $g_j$.
Given $n+1$ observations $X^{n+1}$, as Fig. \ref{f1},
we define the estimator 
$\vec{\xi}^n(X^{n+1})
:=\argmin_{\xi} D(W_{\vec{\theta}_{\rm CRV}(\vec{g}^n(X^{n+1})/n)}
\|W_{\vec{\theta}_{\rm CRV}(\vec{\xi})})$ for 
the curved exponential family $\tilde{{\cal E}}$.
Then, similar to the case of a curved exponential family of probability distributions 
\cite[Section 4.4]{AN},
we can show that
the estimator $\vec{\xi}^n(X^{n+1})$ is asymptotically efficient.
That is, the mean square error matrix is asymptotically approximated to 
$\frac{1}{n} \tilde{\mathsf{H}}_{\vec{\xi}} [\phi]^{-1}+ o(\frac{1}{n})$ as follows.

\begin{theorem}
The random variable $\vec{\xi}^n(X^{n+1})- \vec{\xi}_o$
asymptotically obeys
the Gaussian distribution with the covariance matrix 
$\frac{1}{n}
\tilde{\mathsf{H}}_{\vec{\xi}_o}[\phi]^{-1}$.
Then, the mean square error matrix of
our estimator $\vec{\xi}^n(X^{n+1})$
is asymptotically approximated to 
$\frac{1}{n} \tilde{\mathsf{H}}_{\vec{\xi}} [\phi]^{-1}+ o(\frac{1}{n})$.
\end{theorem}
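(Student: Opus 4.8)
I would deduce the theorem from the asymptotic efficiency of the sample mean $\vec g^n(X^{n+1})/n$ in the ambient exponential family $\cE$, established in Subsection~\ref{s8-2}, by writing $\vec\xi^n(X^{n+1})$ as a $D$-projection of the empirical point onto $\tilde\cE$ and linearising that projection around the true parameter. Abbreviate $\hat{\vec\eta}_n:=\vec g^n(X^{n+1})/n$, put $\vec\theta_o:=\vec\theta_{\rm CRV}(\vec\xi_o)$ and $\vec\eta_o:=\vec\eta(\vec\theta_o)$, and let $W_{\hat{\vec\eta}_n}$ be the element of $\cE$ whose expectation parameter is $\hat{\vec\eta}_n$, so that $\vec\xi^n(X^{n+1})=\argmin_{\vec\xi}D(W_{\hat{\vec\eta}_n}\|W_{\vec\theta_{\rm CRV}(\vec\xi)})$. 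Writing $B:=[\partial\theta^i_{\rm CRV}/\partial\xi_j|_{\vec\xi=\vec\xi_o}]_{i,j}$, the chain rule gives $A=\mathsf H_{\vec\theta_o}[\phi]\,B$, whence $\tilde{\mathsf H}_{\vec\xi_o}=A^T\mathsf H_{\vec\theta_o}[\phi]^{-1}A=B^T\mathsf H_{\vec\theta_o}[\phi]\,B=B^TA$. Moreover, by \eqref{27-15bc} and the Gaussian limit stated just after Lemma~\ref{L29-1c}, $\sqrt n(\hat{\vec\eta}_n-\vec\eta_o)$ converges in law to the centred Gaussian with covariance $\mathsf H_{\vec\theta_o}[\phi]$, and in particular $\hat{\vec\eta}_n\to\vec\eta_o$ in probability.

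First I would turn the minimisation into an estimating equation. Substituting the Bregman-divergence formula \eqref{5-28-1} into $D(W_{\hat{\vec\eta}_n}\|W_{\vec\theta_{\rm CRV}(\vec\xi)})$ and differentiating in $\vec\xi$, the terms involving $\partial\vec\eta'/\partial\vec\xi$ cancel against the derivative of the Legendre transform $\nu$ (since $\partial\nu/\partial\eta_j=\theta^j$), leaving the stationarity condition
\begin{align}
B(\vec\xi)^T\bigl(\vec\eta'(\vec\xi)-\hat{\vec\eta}_n\bigr)=0,\qquad
\vec\eta'(\vec\xi):=\vec\eta(\vec\theta_{\rm CRV}(\vec\xi)),\quad
B(\vec\xi):=\Bigl[\frac{\partial\theta^i_{\rm CRV}}{\partial\xi_j}\Bigr]_{i,j}.
\Label{sketch-est}
\end{align}
This is exactly the orthogonality relation behind the Pythagorean identity of Proposition~\ref{P1-15} and Corollary~\ref{C15-1}: $\vec\xi^n(X^{n+1})$ is the point of $\tilde\cE$ whose $m$-displacement from $W_{\hat{\vec\eta}_n}$ is orthogonal to the tangent space of $\tilde\cE$. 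Since $A$ has rank $d'$ and $\phi$ is strictly convex (Lemma~\ref{L1-14-2}), the implicit function theorem applied to \eqref{sketch-est} shows that $\vec\eta\mapsto\argmin_{\vec\xi}D(W_{\vec\eta}\|W_{\vec\theta_{\rm CRV}(\vec\xi)})$ is well defined and $C^1$ on a neighbourhood of $\vec\eta_o$; together with $\hat{\vec\eta}_n\to\vec\eta_o$ this yields $\vec\xi^n(X^{n+1})\to\vec\xi_o$ in probability.

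Next I would linearise \eqref{sketch-est}. Writing $\vec\xi^n(X^{n+1})=\vec\xi_o+\vec\epsilon_n$, Taylor expansion gives $\vec\eta'(\vec\xi_o+\vec\epsilon_n)-\hat{\vec\eta}_n=A\vec\epsilon_n-(\hat{\vec\eta}_n-\vec\eta_o)+O(|\vec\epsilon_n|^2)$ and $B(\vec\xi_o+\vec\epsilon_n)=B+O(|\vec\epsilon_n|)$; substituting into \eqref{sketch-est} and using $B^TA=\tilde{\mathsf H}_{\vec\xi_o}$ produces
\begin{align}
\sqrt n\,\vec\epsilon_n=\tilde{\mathsf H}_{\vec\xi_o}^{-1}B^T\,\sqrt n(\hat{\vec\eta}_n-\vec\eta_o)+o_P(1).
\Label{sketch-lin}
\end{align}
By Slutsky's theorem $\sqrt n(\vec\xi^n(X^{n+1})-\vec\xi_o)$ then converges in law to the centred Gaussian with covariance
\begin{align}
\tilde{\mathsf H}_{\vec\xi_o}^{-1}B^T\mathsf H_{\vec\theta_o}[\phi]\,B\,\tilde{\mathsf H}_{\vec\xi_o}^{-1}
=\tilde{\mathsf H}_{\vec\xi_o}^{-1}\tilde{\mathsf H}_{\vec\xi_o}\tilde{\mathsf H}_{\vec\xi_o}^{-1}
=\tilde{\mathsf H}_{\vec\xi_o}^{-1},
\end{align}
which is the Cram\'er--Rao bound for $\tilde\cE$; the mean square error claim then follows by upgrading this to convergence of $n\,\mathsf{MSE}$ through a uniform-integrability argument, exactly as in \cite[Section~4.4]{AN}.

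The main obstacle is making this linearisation fully rigorous: one must control the remainder in \eqref{sketch-lin} and justify the uniform integrability. For the former a genuine consistency proof for $\vec\xi^n(X^{n+1})$ together with the bound $|\vec\epsilon_n|=O_P(|\hat{\vec\eta}_n-\vec\eta_o|)=O_P(n^{-1/2})$ is needed; the rank-$d'$ hypothesis on $A$ and the strict convexity of $\phi$ guarantee that the $D$-projection onto $\tilde\cE$ is locally a diffeomorphism, which is precisely what makes these remainder terms $o_P(n^{-1/2})$. The uniform integrability of $n|\vec\xi^n(X^{n+1})-\vec\xi_o|^2$ can be extracted from exponential-type tail bounds for $\vec g^n(X^{n+1})$ of the kind recorded in \cite{HW14-2}. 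Apart from these Markov-chain-specific inputs the computation is the transition-matrix analogue of the classical curved-exponential-family argument, with Proposition~\ref{P1-15} and Lemma~\ref{L20} supplying the Pythagorean theorem and the identification of the Fisher metric, respectively.
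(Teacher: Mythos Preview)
Your argument is correct and reaches the same conclusion by the same underlying idea as the paper---linearising the $D$-projection onto $\tilde\cE$ around $\vec\xi_o$ and applying the delta method to the asymptotically Gaussian $\hat{\vec\eta}_n$---but the execution differs. You derive an explicit estimating equation $B(\vec\xi)^T(\vec\eta'(\vec\xi)-\hat{\vec\eta}_n)=0$ from the Bregman formula \eqref{5-28-1}, linearise it, and read off the asymptotic covariance via $\tilde{\mathsf H}_{\vec\xi_o}^{-1}B^T\mathsf H_{\vec\theta_o}[\phi]\,B\,\tilde{\mathsf H}_{\vec\xi_o}^{-1}=\tilde{\mathsf H}_{\vec\xi_o}^{-1}$. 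The paper instead introduces an orthonormalising change of coordinates: it chooses a $d\times d$ matrix $B=(B_1,B_2)$ with $B^T\mathsf H_{\vec\theta_o}[\phi]^{-1}B=I$ and $B_2^T\mathsf H_{\vec\theta_o}[\phi]^{-1}A=0$, passes to the parameter $\vec\tau=B^{-1}\vec\eta$ in which the Fisher metric is Euclidean, and computes the covariance of the (now ordinary) orthogonal projection. Be aware that the symbol $B$ denotes different objects in the two arguments: yours is the $d\times d'$ Jacobian $\partial\vec\theta_{\rm CRV}/\partial\vec\xi$, the paper's is a $d\times d$ orthonormalising matrix.

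Your route is arguably tidier and more in the spirit of classical $M$-estimation: the first-order condition is explicit, the Taylor remainder is tracked, and you flag the places (consistency via the implicit function theorem, uniform integrability for the MSE claim) where the paper's heuristic ``can be approximately regarded as the projection to the tangent space'' needs to be filled in. The paper's coordinate-change approach is more differential-geometric and makes the role of the Pythagorean structure visually transparent, at the cost of being less precise about the error terms. Either computation yields the same asymptotic covariance; your version is closer to a proof in the analytic sense.
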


\begin{proof}
The random variable $
\vec{g}^n(X^{n+1})/n - \vec{\eta}_o$
asymptotically obeys the Gaussian distribution with the covariance matrix 
$\frac{1}{n}\mathsf{H}_{\vec{\theta}_{\rm CRV}(\vec{\xi}_o)}[\phi]$,
where 
$\vec{\eta}_o:= \vec{\eta}(\vec{\theta}_{\rm CRV}(\vec{\xi}_o)))$.
Since the neighborhood of $\vec{\xi}^n(X^{n+1})$ in $\tilde{{\cal E}}$
can be approximated to the tangent space at the true point $\vec{\xi}_o$, 
due to Corollary \ref{C15-1},
the point $\vec{\theta}_{\rm CRV}(\vec{\xi}^n(X^{n+1}))$
can be approximately regarded as 
the projection to the tangent space at $\vec{\xi}_o$
from the observed point 
$\vec{\theta}_{\rm CRV}(\vec{g}^n(X^{n+1})/n)$.

To see the asymptotic variance of the random variable 
$\vec{\xi}^n(X^{n+1})- \vec{\xi}_o$,
we choose a $d \times d'$ matrix $B_1$ and
a $d \times (d-d')$ matrix $B_2$ such that
the $d \times d$ matrix $B=(B_1,B_2)$ satisfies that
\begin{align}
B^T \mathsf{H}_{\vec{\theta}_{\rm CRV}(\vec{\xi}_o)}[\phi]^{-1} B
=I \quad  \hbox{ and } \quad
B_2^T \mathsf{H}_{\vec{\theta}_{\rm CRV}(\vec{\xi}_o)}[\phi]^{-1} A
=0.
\Label{11-18-1} 
\end{align}
Then, 
$B_1^T \mathsf{H}_{\vec{\theta}_{\rm CRV}(\vec{\xi}_o)}[\phi]^{-1} A$
is invertible.
So, 
$A^T \mathsf{H}_{\vec{\theta}_{\rm CRV}(\vec{\xi}_o)}[\phi]^{-1} A
=$\par\noindent $
(B_1^T \mathsf{H}_{\vec{\theta}_{\rm CRV}(\vec{\xi}_o)}[\phi]^{-1} A)^T
B_1^T \mathsf{H}_{\vec{\theta}_{\rm CRV}(\vec{\xi}_o)}[\phi]^{-1} B_1
(B_1^T \mathsf{H}_{\vec{\theta}_{\rm CRV}(\vec{\xi}_o)}[\phi]^{-1} A)
$\par\noindent $=
(B_1^T \mathsf{H}_{\vec{\theta}_{\rm CRV}(\vec{\xi}_o)}[\phi]^{-1} A)^T
(B_1^T \mathsf{H}_{\vec{\theta}_{\rm CRV}(\vec{\xi}_o)}[\phi]^{-1} A)$.
Now, we introduce the new parameter $\vec{\tau}(\vec{\eta}):= B^{-1}\vec{\eta}$
under which, the metric is given as Cartesian inner product.
Hence, the covariance matrix of the estimator $B^{-1}(\vec{g}^n(X^{n+1})/n)$
for the parameter $\vec{\tau}(\vec{\eta})$ is the matrix $\frac{1}{n}I$.

We denote  the vector $(\tau_1, \ldots, \tau_{d'})^T$
by $\vec{\tau}'(\vec{\eta})$.
Since the parameter $\vec{\xi}$ is approximately identified
with the element of the tangent space,
we have 
$A (\vec{\xi}-\vec{\xi}_o) =\vec{\eta}-\vec{\eta}_o=
B (\vec{\tau}(\vec{\eta})-\vec{\tau}(\vec{\eta}_o))$.
Hence, 
\eqref{11-18-1} implies that
\begin{align*}
& B_1^T  \mathsf{H}_{\vec{\theta}_{\rm CRV}(\vec{\xi}_o)}[\phi]^{-1} A (\vec{\xi}-\vec{\xi}_o) 
=
B_1^T  \mathsf{H}_{\vec{\theta}_{\rm CRV}(\vec{\xi}_o)}[\phi]^{-1} B 
(\vec{\tau}(\vec{\eta}(\vec{\xi}))-\vec{\tau}(\vec{\eta}_o)) \\
= & B_1^T  \mathsf{H}_{\vec{\theta}_{\rm CRV}(\vec{\xi}_o)}[\phi]^{-1} 
B_1 
(\vec{\tau}'(\vec{\eta}(\vec{\xi}))-\vec{\tau}'(\vec{\eta}_o))
=
\vec{\tau}'(\vec{\eta}(\vec{\xi}))-\vec{\tau}'(\vec{\eta}_o).
\end{align*}
Thus,
\begin{align*}
\vec{\xi}-\vec{\xi}_o
=
(B_1^T  \mathsf{H}_{\vec{\theta}_{\rm CRV}(\vec{\xi}_o)}[\phi]^{-1} A)^{-1}
(\vec{\tau}'(\vec{\eta}(\vec{\xi}))-\vec{\tau}'(\vec{\eta}_o)).
\end{align*}
In this approximation, 
our estimator $\vec{\xi}^n(X^{n+1})$ for $\vec{\xi}$ is characterized as\par\noindent
$(B_1^T  \mathsf{H}_{\vec{\theta}_{\rm CRV}(\vec{\xi}_o)}[\phi]^{-1} A)^{-1}
B^{-1}(\vec{g}^n(X^{n+1})/n-\vec{\eta}_o)+\vec{\xi}_o$.
Thus, the covariance matrix of our estimator is
\begin{align*}
& (B_1^T \mathsf{H}_{\vec{\theta}_{\rm CRV}(\vec{\xi}_o)}[\phi]^{-1} A)^{-1}
\frac{I}{n} ((B_1^T \mathsf{H}_{\vec{\theta}_{\rm CRV}(\vec{\xi}_o)}[\phi]^{-1} A)^T)^{-1} \\
=&
\frac{1}{n}((B_1^T \mathsf{H}_{\vec{\theta}_{\rm CRV}(\vec{\xi}_o)}[\phi]^{-1} A)^T
(B_1^T \mathsf{H}_{\vec{\theta}_{\rm CRV}(\vec{\xi}_o)}[\phi]^{-1} A) )^{-1} \\
=&
\frac{1}{n}(A^T \mathsf{H}_{\vec{\theta}_{\rm CRV}(\vec{\xi}_o)}[\phi]^{-1} A)^{-1}
=\frac{1}{n}
\tilde{\mathsf{H}}_{\vec{\xi}_o}[\phi]^{-1}.
\end{align*}
That is, the random variable $\vec{\xi}^n(X^{n+1})- \vec{\xi}_o$
asymptotically obeys
the Gaussian distribution with the covariance matrix 
$\frac{1}{n}
\tilde{\mathsf{H}}_{\vec{\xi}_o}[\phi]^{-1}$.
Therefore, the mean square error matrix of
our estimator $\vec{\xi}^n(X^{n+1})$
is asymptotically approximated to 
$\frac{1}{n} \tilde{\mathsf{H}}_{\vec{\xi}} [\phi]^{-1}+ o(\frac{1}{n})$.
\end{proof}

\begin{remark}\Label{rem8-4}
The papers \cite{Feigin,KS,Hudson,Bhat,Stefanov,Kuchler-Sorensen,Sorensen} showed that
the maximum likelihood estimator (MLE) is asymptotically efficient in the exponential family with their definition (\ref{1-3}).
Since the definition (\ref{1-3}) is different from ours (\ref{5-6}),
the results in this section are different from theirs.
Further, since our asymptotically efficient estimator is given as the sample mean of $g$, 
the required calculation amount is smaller than theirs.
Even in the case of a curved exponential family,
the Pythagorean theorem (\ref{5-1}) enables us to calculate our asymptotically efficient estimator
with small amount of calculation.
However, their MLE does not have so simple form
because their exponential family does not have such a geometrical structure, e.g., expectation parameter and the Pythagorean theorem, etc.
Hence, it requires large calculation amounts.

Indeed, when the matrix entries of the transition matrix is to be estimated,
the literature \cite{CS} showed that 
the sample mean is the same as the maximum likelihood estimator.
However, this fact holds only for such a specific parameter,
and cannot be applied to the parameter estimation of our exponential family, in general.
Our method can be applied to any parameter of an exponential family in our sense.
\end{remark}

\subsection{Implementation of our estimator for curved exponential family}\Label{s8-4}
In this subsection, we consider how to calculate our estimator $\vec{\xi}^n(X^{n+1})$.
This calculation depends on the type of parametrization of 
the transition matrix $W_{\vec{\theta}_{\rm CRV}(\vec{\xi})}$.
We can consider two cases as follows.
\begin{description}
\item[(1)]
The entries of the transition matrix $W_{\vec{\theta}_{\rm CRV}(\vec{\xi})}$
are calculated directly from $\vec{\xi}$ with small calculation complexity.

\item[(2)]
The entries of the transition matrix $W_{\vec{\theta}_{\rm CRV}(\vec{\xi})}$
are calculated by \eqref{12-26-1}
via the calculation of $\vec{\theta}_{\rm CRV}(\vec{\xi})$.
In this case, the calculation of these entries
has large calculation complexity.

\end{description}

For example, Example \ref{1-3-8c} belongs to Case (1) because $W_{\vec{\eta}}$ is directly calculated from the parameter $\vec{\eta}$.

In the calculation of the estimator $\vec{\xi}^n(X^{n+1})$,
first, we obtain the estimate $\vec{\eta}'$ of the larger exponential family ${\cal E}$ with the expectation parameter.  
Then, 
we calculate its natural parameter $\vec{\theta}'$ 
by the method given in the end of Subsection \ref{s8-1}.
The following steps depend on the above case.
In Case (1), 
we can implement the minimization by employing the final expression 
in \eqref{eq:markov-divergence-cdf-relation} with small calculation complexity
due to the following reason.
The final expression in \eqref{eq:markov-divergence-cdf-relation}
needs only the entries of the transition matrices
$W_{\vec{\theta}'} $ and $W_{\vec{\theta}_{\rm CRV}(\vec{\xi})}$
and the Perron-Frobenius eigenvector of $W_{\vec{\theta}'}$.
In this case, it is enough to calculate the Perron-Frobenius eigenvalue
the Perron-Frobenius eigenvector of $W_{\vec{\theta}'}$
only at the first step.
At each step of the minimization,
we do not have any difficult calculation.
Therefore, 
the final expression in \eqref{eq:markov-divergence-cdf-relation} brings us 
an easy implementation of the minimization in Case (1).

However, in Case (2), it is better to employ \eqref{1-1} instead of the final expression in \eqref{eq:markov-divergence-cdf-relation} due to the following reason.
When the final expression in \eqref{eq:markov-divergence-cdf-relation} is employed,
the calculation of the transition matrix 
$W_{\vec{\theta}_{\rm CRV}(\vec{\xi})}$ 
requires the calculations of 
the Perron-Frobenius eigenvalue
and the Perron-Frobenius eigenvector 
of the matrix given in \eqref{5-6} as in \eqref{12-26-1}.
To calculate the RHS of \eqref{1-1}, we need to calculate
the partial derivative $\frac{\partial \phi}{\partial \theta^j}(\vec{\theta}')$
and the Perron-Frobenius eigenvalues
 $\phi(\vec{\theta}')$ and $\phi(\vec{\theta}_{\rm CRV}(\vec{\xi}))$.
Fortunately,
the partial derivative $\frac{\partial \phi}{\partial \theta^j}(\vec{\theta}')$
coincides with the expectation parameter $\vec{\eta}'$, which is firstly obtained.
Also, it is enough to calculate the Perron-Frobenius eigenvalue
 $\phi(\vec{\theta}')$ only once.
Hence, at each step of the minimization,
we need to calculate only the Perron-Frobenius eigenvalue
$\phi(\vec{\theta}_{\rm CRV}(\vec{\xi}))$, i.e., we do not need to calculate 
the Perron-Frobenius vector. 
Therefore, 
\eqref{1-1} requires less calculation complexity than the final expression in \eqref{eq:markov-divergence-cdf-relation} in Case (2).

\section{Conclusion}\Label{s9}
We have revisited the information geometrical structure 
(the exponential family, the natural parameter, the expectation parameter, 
relative entropy, relative R\'{e}nyi entropy, Fisher information matrix,
and 
the Pythagorean theorem)
of transition matrices
by using the convex function 
$\phi(\theta)$
defined by the Perron-Frobenius eigenvalue
of the matrix $\overline{W}_{\vec{\theta}}$ defined by (\ref{5-6}).
Then, we have shown that the sample mean of the generating function
is an asymptotically efficient estimator for the expectation parameters
in the exponential family of transition matrices.
Combining this property and the Pythagorean theorem, 
we have given an asymptotically efficient estimator for a curved exponential family of transition matrices.
As a consequence, 
we have characterized the asymptotic variance 
of the sample mean in the Markovian chain by using the second derivative of 
the convex function $\phi(\theta)$.

In this paper, we have assumed that 
our system consists of finite elements.
Indeed, the existing papers \cite{C7,C8,C9,C10,C11} reported 
several difficulties to evaluate the variance
of the sample mean in the continuous probability space 
even with the discrete time Markov chain.
So, it is remained to extend the obtained results to the continuous case. 
However, this assumption is assumed only for describing the conditional distribution by a matrix.
We do not use the finiteness of the cardinality of the probability space explicitly.
Therefore, it seems that there is no essential obstacle for extension to the continuous case under a proper regularity condition. 
This extension will enable us to handle several 
Gaussian Markovian chains in a simple way. 
Further, the obtained version of the Pythagorean theorem
will be helpful for the hierarchy of exponential families of transition matrices.
For an example, 
a hierarchy of exponential families can be constructed 
by changing the degree of Markovian chain,
it might be interesting to investigate this example.

\section*{Acknowledgment}
The authors are grateful for Dr. Wataru Kumagai to informing the references \cite{CSV,NM,MKK}. 
MH is partially supported by a MEXT Grant-in-Aid for Scientific Research (A) No. 23246071. 
MH is also partially supported by the National Institute of Information and Communication Technology (NICT), Japan.
SW is partially supported by JSPS Postdoctoral Fellowships for Research Abroad.
The Centre for Quantum Technologies is funded by the Singapore Ministry of Education and the National Research Foundation as part of the Research Centres of Excellence programme.

\appendix

\section{Relation with existing results}\Label{as1}
As mentioned in Introduction, some of results in this paper 
for relative entropy and exponential family
have been already stated
in \cite{HN} (without detailed proof) and we restate those results and give proofs to keep the paper self-contained.
For deeper understanding, we summarize the relation with those papers in this appendix.

Our definition (\ref{1-10}) for the relative entropy $D(W\|V)$ has the following relation with those by \cite{N,NK,HN}.
Natarajan \cite{N} and Nakagawa and Kanaya \cite{NK}
defined the relative entropy $D(W\|V)$
by the final term of (\ref{eq:markov-divergence-cdf-relation}).
However, Nagaoka \cite{HN} defined the relative entropy $D(W\|V)$ by (\ref{1-1})
and showed the equivalence with the final term of (\ref{eq:markov-divergence-cdf-relation}).
If we consider only the relative entropy $D(W\|V)$,
the definition by the final term of (\ref{eq:markov-divergence-cdf-relation}) is natural.
However, the relative R\'{e}nyi entropy $D_{1+s}(W\|V)$ cannot define in the same way.
Hence, in order to treat the relative entropy $D(W\|V)$ and
the relative R\'{e}nyi entropy $D_{1+s}(W\|V)$ in a unified way,
we adopt the definition (\ref{1-10}) for the relative entropy $D(W\|V)$ 
instead of the final term of (\ref{eq:markov-divergence-cdf-relation}).
Our definition clarifies the relation between 
the relative entropy $D(W\|V)$ and
the relative R\'{e}nyi entropy $D_{1+s}(W\|V)$,
which is helpful when we apply these quantities to 
simple hypothesis testing \cite{HW14-2},
random number generation, data compression,
and channel coding \cite{W-H2}
in Markov chain.

Next, we address the convexity of the function $\phi(\vec{\theta})$.
Nakagawa and Kanaya \cite[Section III]{NK} and Nagaoka \cite{HN}
showed the convexity $\phi(\vec{\theta})$ in their respective cases.
Nagaoka \cite{HN} also showed the equivalence between (1) and (5) in Lemma \ref{L1-14-2}.
However, they did not clearly consider the relation with the other conditions in Lemma \ref{L1-14-2}.
In fact, these equivalence relations are essential 
for the condition of a generator of an exponential family
and also for applications to 
finite-length evaluations of the tail probability, 
the error probability in simple hypothesis testing \cite{HW14-2}, 
source coding, channel coding, and random number generation \cite{W-H2} in Markov chain.

Now, we proceed to the definition of an exponential family for transition matrices.
Our logical order of arguments in this definition is different from that by Nagaoka \cite{HN} and Nakagawa and Kanaya \cite{NK}.
We firstly define the potential function $\phi(\vec{\theta})$ from 
a given transition matrix $W$ and a given generator $\{g_j\}$ 
Then, we give the parametric transition matrices although their papers \cite{HN,NK} gave the parametric transition matrices firstly.
The potential function $\phi(\vec{\theta})$ 
for a transition matrix $W$ and a generator $\{g_j\}$ 
produces several information quantities, 
which play the central roles 
when we apply the exponential family for transition matrices
to finite-length evaluations of the tail probability and
the above applications \cite{HW14-2,W-H2} in Markov chain.
To characterize these information quantities,
we employ an exponential family of transition matrices.
So, our logical order adapts such an application.
Further, this paper introduces a mixture family while
the existing papers \cite{HN,NK} did not define a mixture family. 

Indeed, Kontoyiannis and Meyn \cite[(11)]{CLT2} gave 
a one-parameter family of 
transition matrices with the same logical order.
However, they did not use the terminology ``exponential family'' 
and did not show the convexity of the potential function $\phi(\vec{\theta})$.
Ito and Amari \cite{HA} discussed the geometrical structure of an exponential family of transition matrices
only for ${\cal W}_{{\cal X}}$ in the same definition as ours.
However, they did not treat this set as an exponential family of transition matrices.

Our formula (\ref{5-1}) in Pythagorean theorem (Proposition \ref{P1-15})
has the following relation with Nakagawa and Kanaya \cite{NK}.
Nakagawa and Kanaya \cite[Lemma 5]{NK} showed 
(\ref{5-1}) with $k=1$. 
Hence, our relation (\ref{5-1}) can be regarded as a generalization of 
Nakagawa and Kanaya \cite[Lemma 5]{NK}.
Indeed, the motivation of Nakagawa and Kanaya \cite[Lemma 5]{NK}
is related to the exponent of simple hypothesis testing.
That is, their purpose is to show 
the relation
\begin{align}
\min_{W:D(W\|W_1) \le r} D(W\|W_0)
=
\min_{\theta:D(W_{\theta}\|W_1) \le r} D(W_{\theta}\|W_0).\Label{5-9}
\end{align}
However, 
the multi-parametric extension 
(\ref{5-1}) is essential for estimation in a curved exponential family,
which is discussed in Subsection \ref{s8-3}.

\section{Set of positive bi-stochastic matrices}\Label{as2}
To discuss Example \ref{1-3-8c} in the detail, 
we investigate the set of bi-stochastic matrices on 
${\cal X}=\{0,1, \ldots, m\}$.
First, we divide the linear space of $(m+1)\times (m+1)$ matrices into two linear spaces:
\begin{align}
{\cal A}&:=\{ (v_x+w_y)_{x,y}| (v_x)_x,(w_x)_x \in \mathbb{R}^{m+1} \} \\
{\cal B}&:=\left\{ (a_{x,y})_{x,y} \left| 
\sum_{x'=0}^{m} a_{x',y}=\sum_{y'=0}^{m} a_{x,y'}=0 \hbox{ for }
x,y=0, 1,\ldots, m \right.\right\} .
\end{align}
In the following, any two-input function $g(x,x')$ is regarded as an $(m+1)\times (m+1)$ matrix.
For an arbitrary non-identical permutation $\sigma \in S_{{\cal X}}$,
the function $\hat{g}_\sigma$ belongs to ${\cal B}$.
The function $g_j$ belongs to ${\cal A}$.
Also, when a function $h$ satisfies $h(x,y)=c+v_x- v_y$ with a constant $c$ and a vector $(v_x)\in \mathbb{R}^{m+1}$,
the function $h$ belongs to ${\cal A}$.
Any non-zero linear combination of $\{g_{j}\}_{j=1}^m$
cannot be written by the above function $h$.
Thus, to show the linear independence of the set of functions 
$\{g_{j}\}_{j=1}^m \cup \{\hat{g}_\sigma\}_{\sigma \in T \cup H}$,
it is enough to show the following lemma.

\begin{lemma}\Label{L1-11-2}
The set $\{\hat{g}_\sigma\}_{\sigma \in T \cup H}$
is linearly independent 
in the linear space ${\cal B}$.
\end{lemma}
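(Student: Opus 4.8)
The plan is to expand a hypothetical relation $\sum_{\sigma\in T\cup H}c_\sigma\hat{g}_\sigma=0$ entrywise and then eliminate the coefficients in a triangular fashion. Since $\hat{g}_\sigma(x,x')=\delta_{x,\sigma x'}-\delta_{x,x'}$, reading the relation off at an off-diagonal position $(x,x')$ with $x\neq x'$ gives
\[
\sum_{\sigma\in T\cup H\,:\,\sigma(x')=x}c_\sigma=0 .
\]
Everything then reduces to listing, for each prescribed pair $(x',x)$ with $x\neq x'$, the permutations in $T\cup H$ realizing $\sigma(x')=x$. The only transposition doing so is $(x\,x')$ itself; a $3$-cycle $(0\,a\,b)$ with $0<a<b$ fixes every point outside $\{0,a,b\}$ and acts by $0\mapsto a$, $a\mapsto b$, $b\mapsto 0$, so it carries a positive point either ``upward'' ($a\mapsto b>a$) or to $0$ ($b\mapsto 0$), never to a smaller positive point. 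This monotonicity is the structural fact that makes the elimination triangular.

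Concretely I would proceed in three rounds. (i) For $1\le i<j\le m$, the $(i,j)$-entry of the relation sees only the transposition $(i\,j)$, since no $3$-cycle in $H$ can send the positive point $j$ to the smaller positive point $i$; hence $c_{(i\,j)}=0$ for every transposition of two positive points. (ii) For $1\le i<j\le m$, the $(j,i)$-entry sees exactly $(i\,j)$ and the $3$-cycle $(0\,i\,j)$, so $c_{(i\,j)}+c_{(0\,i\,j)}=0$; combined with (i) this gives $c_{(0\,i\,j)}=0$ for every $3$-cycle in $H$. (iii) For $1\le i\le m$, the $(i,0)$-entry sees the transposition $(0\,i)$ together with the $3$-cycles $(0\,i\,b)$ with $i<b\le m$, giving $c_{(0\,i)}+\sum_{b=i+1}^{m}c_{(0\,i\,b)}=0$; by (ii) this forces $c_{(0\,i)}=0$. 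Since $T\cup H$ consists precisely of the transpositions of two positive points, the $3$-cycles in $H$, and the transpositions through $0$, all coefficients vanish, which is the assertion. As a byproduct, since $|T\cup H|=m^2=\dim{\cal B}$, the set is in fact a basis of ${\cal B}$.

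The only real work is the combinatorial bookkeeping in rounds (i)--(iii): for each chosen entry one must check the \emph{complete} list of permutations in $T\cup H$ attaining the prescribed value, so that no stray $3$-cycle contribution is overlooked and each linear equation is exactly as stated. Once the ``upward-or-to-zero'' behaviour of the cycles $(0\,a\,b)$ is isolated, these lists are all very short and the three eliminations are immediate; no eigenvalue, convexity, or other input from earlier sections is needed.
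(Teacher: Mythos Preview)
Your argument is correct. The entrywise reading is exactly right: for $x\neq x'$ the $(x,x')$-entry of $\sum_\sigma c_\sigma\hat g_\sigma$ is $\sum_{\sigma:\sigma(x')=x}c_\sigma$, and your case analysis of which $\sigma\in T\cup H$ realise $\sigma(x')=x$ is complete. Once you fix the convention $(0\,a\,b):0\mapsto a\mapsto b\mapsto 0$ (as you do), the ``upward-or-to-zero'' observation makes the three rounds genuinely triangular, and all coefficients drop out as you describe.

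The paper, however, proceeds differently. Instead of reading off single entries, it splits each $\hat g_\sigma$ into its symmetric and antisymmetric parts $S[\hat g_\sigma]$ and $A[\hat g_\sigma]$. Since a transposition has a symmetric permutation matrix, $A[\hat g_\tau]=0$ for every $\tau\in T$; thus the antisymmetric part of the relation involves only the $3$-cycles, and one checks that among $\{A[\hat g_{(0,i,j)}]\}$ only $A[\hat g_{(0,i',j')}]$ has a nonzero $(i',j')$-entry, forcing all $c_{(0,i,j)}=0$. Then the symmetric part reduces to the transpositions alone, and only $S[\hat g_{(i',j')}]$ has a nonzero $(i',j')$-entry, forcing all $c_{(i,j)}=0$. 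So the paper separates $T$ from $H$ structurally (via the parity of the permutation matrix under transpose) rather than by a monotonicity argument. Your approach is more elementary and avoids the symmetric/antisymmetric machinery at the price of a slightly longer case check; the paper's approach exploits the algebraic distinction between involutions and $3$-cycles to decouple the two families in one stroke, and is in fact insensitive to the cycle convention. Both are short and valid.
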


The number of elements of the set
$\{g_{j}\}_{j=1}^m \cup \{\hat{g}_\sigma\}_{\sigma \in T \cup H}$
is $m^2$, which equals the dimension of ${\cal B}$.
So, the set $\{g_{j}\}_{j=1}^m \cup \{\hat{g}_\sigma\}_{\sigma \in T \cup H}$
spans the linear space ${\cal B}$.
For any bi-stochastic matrix $W$, we have $W-W_{id} \in {\cal B}$. 
Hence, $W-W_{id}$ can be written as 
a linear combination of $\{\hat{g}_\sigma\}_{\sigma \in T \cup H}$,
i.e., $\sum_{\sigma \in T \cup H} \eta_\sigma \hat{g}_\sigma$.
Therefore, 
$W=W_{id}+ \sum_{\sigma \in T \cup H} \eta_\sigma \hat{g}_\sigma
=W_{\vec{\eta}}$.

\begin{proofof}{Lemma \ref{L1-11-2}}
Now, we prepare notations.
For a two-input function $g$,
we define the symmetric matrix
$S[g]_{x,x'}:= g(x,x') +g(x',x)$
and the anti-symmetric matrix
$A[g]_{x,x'}:= g(x,x') -g(x',x)$.

Due to the constraint for ${\cal B}$,
the diagonal entries of an element of $S({\cal B})$ are determined by other entries.
Fixed $0\le i'<j'\le m$, only the matrix $S[\hat{g}_{(i',j')}]$ has a non-zero $(i',j')$-th entry
among the set $\{S[\hat{g}_{(i,j)}]\}_{(i,j)\in T}$.
Hence, the set $\{S[\hat{g}_{(i,j)}]\}_{(i,j)\in T}$ 
is linearly independent in the linear space $S[{\cal B}]$.

Due to the constraint for ${\cal B}$,
the $(0,i)$-th entry and $(i,0)$-th entry of an element of $A({\cal B})$ are determined by other entries.
Fixed $0< i'<j'\le m$,
only the matrix $A[\hat{g}_{(0,i',j')}]$
has a non-zero $(i',j')$-th entry among the set $\{A[\hat{g}_{(0,i,j)}]\}_{(0,i,j)\in H}$. 
Hence, the set $\{A[\hat{g}_{(0,i,j)}]\}_{(0,i,j)\in H}$ is linearly independent
in the linear space $A[{\cal B}]$.
Therefore, 
the set $\{\hat{g}_{(0,i,j)}\}_{(0,i,j)\in H}$ is linearly independent
in the linear space ${\cal B}$.
Since $A[\hat{g}_{(i,j)}]=0$ for $(i,j)\in T$, 
the set $\{\hat{g}_{\sigma}\}_{\sigma\in T\cup H}$ is linearly independent
in the linear space ${\cal B}$.
\end{proofof}

\section{Proofs of Lemmas \ref{L11} and \ref{L11-2}}\Label{as3}
The Fisher information $J_{\theta}^2$ can be written as
\begin{align}
\nonumber 
& J_{\theta}^2
= \sum_{x,x'} W_{\theta} \times \overline{P}^1_{\theta}(x,x') 
[-\frac{d^2}{d\theta^2}
\log W_{\theta} (x|x')  \overline{P}^1_{\theta}(x')]  \\
= &\sum_{x,x'} W_{\theta} \times \overline{P}^1_{\theta}(x,x') 
[-\frac{d^2}{d\theta^2} \log W_{\theta} (x|x') 
-\frac{d^2}{d\theta^2} \log \overline{P}^1_{\theta}(x')] \nonumber \\
= &
\sum_{x,x'} W_{\theta} \times \overline{P}^1_{\theta}(x,x') 
[-\frac{d^2}{d\theta^2} \log W_{\theta} (x|x') ]
+
\sum_{x'} \overline{P}^1_{\theta}(x') 
[-\frac{d^2}{d\theta^2} \log \overline{P}^1_{\theta}(x')] \nonumber \\
= &
\sum_{x,x'} W_{\theta} \times \overline{P}^1_{\theta}(x,x') 
[-\frac{d^2}{d\theta^2} \log W_{\theta} (x|x') ]
+
J_{\theta}^1\nonumber \\
= &
\sum_{x,x'} W_{\theta} \times \overline{P}^1_{\theta}(x,x') 
\Bigl[-\frac{d^2}{d\theta^2} \log \frac{1}{\lambda_{\theta}}
-\frac{d^2}{d\theta^2} \log \frac{\overline{P}^3_{\theta}(x)}{\overline{P}^3_{\theta}(x')}
-\frac{d^2}{d\theta^2} \log W(x|x')
\nonumber \\
&\hspace{5ex}-\frac{d^2}{d\theta^2} \theta g(x,x')\Bigr]
+
J_{\theta}^1\nonumber \\
\stackrel{(a)}{=} &
\sum_{x,x'} W_{\theta} \times \overline{P}^1_{\theta}(x,x') 
[-\frac{d^2}{d\theta^2} \log \frac{1}{\lambda_{\theta}}]
+
J_{\theta}^1 
= 
\frac{d^2\phi}{d\theta^2} (\theta)
+
J_{\theta}^1, 
\Label{25-1}
\end{align}
where $(a)$ follows from 
the relation $\sum_{x,x'} W_{\theta} \times \overline{P}^1_{\theta}(x,x') 
\frac{d^2}{d\theta^2} \log \frac{\overline{P}^3_{\theta}(x)}{\overline{P}^3_{\theta}(x')}
=0$, which is shown by the following fact:
The expectations of  
$\frac{d^2}{d\theta^2} \log {\overline{P}^3_{\theta}(X)}$ and
$\frac{d^2}{d\theta^2} \log {\overline{P}^3_{\theta}(X')}$ are the same
because the marginal distributions of $X$ and $X'$ are the same.
Hence, we obtain (\ref{27-10}).
The Fisher information $J_{\theta}^2$ is also written as 
\begin{align}
\nonumber
& J_{\theta}^2
= \sum_{x,x'} W_{\theta} \times \overline{P}^1_{\theta}(x,x') 
\Bigl(\frac{d}{d\theta}
\log W_{\theta} (x|x')  \overline{P}^1_{\theta}(x') \Bigr)^2\\
= & \sum_{x,x'} W_{\theta} \times \overline{P}^1_{\theta}(x,x') 
\Biggl[\Bigl(\frac{d}{d\theta}
\log W_{\theta} (x|x')\Bigr)^2\nonumber \\
&+
2 \Bigl(\frac{d}{d\theta}\log W_{\theta} (x|x')\Bigr)
\Bigl(\frac{d}{d\theta} \log \overline{P}^1_{\theta}(x') \Bigr)
+
\Bigl(\frac{d}{d\theta}
\log \overline{P}^1_{\theta}(x') \Bigr)^2\Biggr] \nonumber \\
= &
\sum_{x,x'} W_{\theta} \times \overline{P}^1_{\theta}(x,x') 
\Biggl[ \Bigl( \frac{d}{d\theta}
\log W_{\theta} (x|x')\Bigr)^2 \Biggl] 
+
\sum_{x'} \overline{P}^1_{\theta}(x') 
\Bigl(\frac{d}{d\theta}
\log \overline{P}^1_{\theta}(x') \Bigr)^2 \Biggr] \nonumber \\
&+
2\sum_{x,x'} 
\Bigl(\frac{d}{d\theta}
\log W_{\theta} (x|x') \Bigr) W_{\theta}(x|x') 
\Bigl(\frac{d}{d\theta} \log \overline{P}^1_{\theta}(x') \Bigr) 
\overline{P}^1_{\theta}(x') 
\nonumber \\
= &
\sum_{x,x'} W_{\theta} \times \overline{P}^1_{\theta}(x,x') 
\Bigl[\frac{d}{d\theta}
\log W_{\theta} (x|x')\Bigr]^2
+ 
J_{\theta}^1 \nonumber \\
= &
\sum_{x,x'} W_{\theta} \times \overline{P}^1_{\theta}(x,x') 
\Bigl[- \frac{d \phi}{d \theta}(\theta)
+ \frac{d}{d\theta} \log \overline{P}^3_{\theta}(x)
- \frac{d}{d\theta} \log \overline{P}^3_{\theta}(x')
+ g(x,x')\Bigr]^2
+ 
J_{\theta}^1. 
\Label{25-2}
\end{align}
Combining (\ref{27-10}) and (\ref{25-2}), we have
\begin{align}
\frac{d^2\phi}{d\theta^2} (\theta)
=
\mathsf{V}_\theta [
(g(x,x')- \frac{d \phi}{d \theta}(\theta))
+ \frac{d}{d\theta} \log \overline{P}^3_{\theta}(x)
- \frac{d}{d\theta} \log \overline{P}^3_{\theta}(x')
]
>0,
\end{align}
which implies (\ref{27-11}).
Since
\begin{align*}
&
(\frac{d^2\phi}{d\theta^2} (\theta)
+(\frac{d}{d\theta} \phi(\theta))^2)
e^{\phi(\theta)}
=\frac{d^2}{d\theta^2} e^{\phi(\theta)}
=
\sum_{x,x'}\frac{d^2}{d\theta^2}
W(x|x') e^{\theta g(x,x')} \overline{P}^2_{\theta}(x') \nonumber \\
=&
\sum_{x,x'}
W(x|x') e^{\theta g(x,x')} \overline{P}^2_{\theta}(x')
g(x,x')^2
+2 W(x|x') e^{\theta g(x,x')} 
\frac{d \overline{P}^2_{\theta}(x')}{d \theta}
g(x,x') \nonumber \\
&\quad +
W(x|x') e^{\theta g(x,x')} 
\frac{d^2 \overline{P}^2_{\theta}(x')}{d \theta^2},
\end{align*}
we have another expression of $\frac{d^2\phi}{d\theta^2} (\theta)$
as follows.
\begin{align*}
&
\frac{d^2\phi}{d\theta^2} (\theta)\nonumber \\
=&
e^{-\phi(\theta)}
\Bigl[\sum_{x,x'}
W(x|x') e^{\theta g(x,x')} \overline{P}^2_{\theta}(x')
g(x,x')^2
+2 W(x|x') e^{\theta g(x,x')} 
\frac{d \overline{P}^2_{\theta}(x')}{d \theta}
g(x,x') \nonumber \\
&\quad +
W(x|x') e^{\theta g(x,x')} 
\frac{d^2 \overline{P}^2_{\theta}(x')}{d \theta^2}\Bigr]
-(\frac{d}{d\theta} \phi(\theta))^2.
\end{align*}
When $\theta=0$,
\begin{align*}
\frac{d^2\phi}{d\theta^2} (0)
=&[\sum_{x,x'}
W(x|x') \overline{P}^2_{0}(x')
g(x,x')^2
+2 W(x|x') g(x,x') \frac{d \overline{P}^2_{\theta}(x')}{d \theta}
\Bigr|_{\theta=0}
]
-\eta(0)^2 \nonumber \\
=&
\mathsf{V}_0 [g(X,X')]
+
2 \sum_{x,x'}
W(x|x') g(x,x') \frac{d \overline{P}^2_{\theta}(x')}{d \theta}\Bigr|_{\theta=0}
\end{align*}
because $
\sum_{x,x'}W(x|x') 
\frac{d^2 \overline{P}^2_{\theta}(x')}{d \theta^2}=
\frac{d^2}{d \theta^2}
\sum_{x,x'}W(x|x')  \overline{P}^2_{\theta}(x')=0$
and 
$\eta(0)=\mathsf{E}_0[g(X,X')]$.
Hence, we obtain (\ref{27-12}).

\section{Twice differentiability}
We show the twice-differentiablity of $\phi(\theta)$,
$\overline{P}_\theta^2 $ and $\overline{P}_\theta^3 $.
First, focus on the $1$-parameter case.
Now, we define the function $F_1(\theta,z):= \det (\overline{W}_{\theta}- z I)$ with the identity matrix $I$.
Since $\lambda_\theta=e^{\phi(\theta)}$ is the unique solution of $F_1(\theta,z)=0$
and the function $F_1(\theta,z)$ is twice-differentiable,
the implicit function theorem guarantees that
$\lambda_\theta$ is twice-differentiable.
Hence, $\phi(\theta)$ is also twice-differentiable.

Next, we show that the twice-differentiablity of
$\overline{P}_\theta^2 $ and $\overline{P}_\theta^3 $,
which are normalized eigenvector with positive entries of
$\overline{W}_{\theta}$ and $\overline{W}_{\theta}^T$.
Now, we define the vector-valued function 
$F_2(\theta,y):= \overline{W}_{\theta}y $ 
and
the function $F_3(\theta,y):=\sum_{x\in {\cal X}} y_x$.
Since $\overline{P}_\theta^3 $ 
is the unique solution of $F_2(\theta,y)=0$ and $F_3(\theta,y)=1$
and the functions $F_2(\theta,y)$ and $F_3(\theta,y)$ are twice-differentiable,
the implicit function theorem guarantees that
$\overline{P}_\theta^2$ is twice-differentiable.
Replacing the role of $\overline{W}_{\theta} $ by that of $\overline{W}_{\theta}^T$,
we can show the twice-differentiablity of $\overline{P}_\theta^3 $.
These discussions can be extended to the case when $\theta$ is a $d$-dimensional parameter.

\end{document}